\numberwithin{equation}{section}
\numberwithin{figure}{section}
\newtheorem{theorem}{Theorem}[section]
\newtheorem{lemma}[theorem]{Lemma}
\newtheorem{proposition}[theorem]{Proposition}
\newtheorem{corollary}[theorem]{Corollary}
\newtheorem{conjecture}[theorem]{Conjecture}
\newtheorem{remark}[theorem]{Remark}
\newtheorem{example}[theorem]{Example}
\theoremstyle{definition}
\newtheorem{definition}[theorem]{Definition}
\newcommand{\C}{{\mathbb{C}}}
\newcommand{\Z}{{\mathbb{Z}}}
\DeclareMathOperator{\Stab}{Stab}
\DeclareMathOperator{\Des}{Des}
\newcommand{\hsm}{{\hspace{1mm}}}
\definecolor{grey}{rgb}{0.7, 0.75, 0.71}
\newcommand{\W}{\mathcal{W}}
\newcommand{\A}{\mathcal{A}}
\newcommand{\Par}{{\mathrm{Par}}}
\newcommand{\Symm}{{\mathfrak{S}}}
\renewcommand{\Stab}{{\mathrm{Stab}}}
\newcommand{\Flags}{{\mathcal{F}\ell ags}}
\newcommand{\Hess}{{\mathcal{H}ess}}
\newcommand{\src}{{\mathrm{src}}}
\newcommand{\tgt}{{\mathrm{tgt}}}
\newcommand{\sk}{{\mathrm{sk}}}
\newcommand{\SK}{{\mathrm{SK}}} 
\newcommand{\asc}{{\mathrm{asc}}}
\newcommand{\inv}{{\mathrm{inv}}}
\begin{document}

\title{Upper-triangular linear relations on multiplicities and the Stanley--Stembridge conjecture}

\author{Megumi Harada}
\address{Department of Mathematics and
Statistics\\ McMaster University\\ 1280 Main Street West\\ Hamilton, Ontario L8S4K1\\ Canada}
\email{Megumi.Harada@math.mcmaster.ca}
\urladdr{\url{http://www.math.mcmaster.ca/Megumi.Harada/}}
\thanks{The first author is partially supported by a Canada Research Chair (Tier 2) Award and an NSERC Discovery Grant.}

\author{Martha Precup}
\address{Department of Mathematics and Statistics\\ Washington University in St. Louis \\ One Brookings Drive \\ St. Louis, Missouri  63130 \\ U.S.A. }
\email{martha.precup@wustl.edu}
\urladdr{\url{https://www.math.wustl.edu/~precup/}}

\keywords{Hessenberg varieties, Stanley--Stembridge conjecture, $e$-positivity, symmetric functions} 

\date{\today}


\begin{abstract}
In 2015, Brosnan and Chow, and independently Guay-Paquet, proved the Shareshian--Wachs conjecture, which links the Stanley--Stembridge conjecture in combinatorics to the geometry of Hessenberg varieties through Tymoczko's permutation group action on the cohomology ring of regular semisimple Hessenberg varieties. In previous work, the authors exploited this connection to prove a graded version of the Stanley--Stembridge conjecture in a special case. In this manuscript, we derive a new set of linear relations satisfied by the multiplicities of certain permutation representations in Tymoczko's representation. We also show that these relations are upper-triangular in an appropriate sense, and in particular, they uniquely determine the multiplicities. As an application of these results, we prove an inductive formula for the multiplicity coefficients corresponding to partitions with a maximal number of parts. 
\end{abstract}

\maketitle

\setcounter{tocdepth}{1}
\tableofcontents


\section{Introduction}\label{sec:intro}

Recent results have forged exciting new connections between algebraic combinatorics and the geometry and topology of certain subvarieties of the flag variety called \textit{Hessenberg varieties}.  In particular, the Shareshian--Wachs conjecture \cite{ShareshianWachs2016}, proven in 2015 by Brosnan and Chow \cite{BrosnanChow2015} (and independently by Guay-Paquet \cite{Guay-Paquet2016}), established a new connection between Hessenberg varieties and the long-standing \textit{Stanley--Stembridge conjecture} in combinatorics, which states that the chromatic symmetric function of the incomparability graph of a (3+1)-free poset is e-positive, i.e., it is a non-negative linear combination of elementary symmetric functions. This is a well-known conjecture in the field of combinatorics which is related to various other deep conjectures about immanants. 
These recent results have established the following research problem: \emph{use the properties of Hessenberg varieties to prove the Stanley--Stembridge conjecture.} The problem can in fact be made more specific, as follows. 
The results of Brosnan--Chow and Guay-Paquet connect the \textit{dot action representation}, defined by Tymoczko in \cite{Tym08} on the cohomology groups of regular semisimple Hessenberg varieties, to the Stanley--Stembridge conjecture.  From this it follows that if Tymoczko's dot action representation is a permutation representation in which each point stabilizer is a Young subgroup, then the Stanley--Stembridge conjecture is true. We refer the reader to \cite[Introduction and Section 2]{HaradaPrecup2017} for a more leisurely account of the historical background and motivation for this circle of ideas.

There are already substantive partial results to the problem stated above. Most recently, we used Hessenberg varieties to prove a graded refinement of the Stanley--Stembridge conjecture in the so-called \textit{abelian case} by giving an inductive description of the nontrivial permutation representations that appear in that case.\cite{HaradaPrecup2017}. Moreover, in that manuscript we additionally stated a conjecture which gives, in the general case, an inductive description of the multiplicities of certain nontrivial permutation representations \cite[Conjecture 8.1]{HaradaPrecup2017}. 
Our main motivation for the present manuscript was to prove this conjecture using the geometry and combinatorics of Hessenberg varieties. In doing so, we discovered new properties obeyed by the multiplicities of the so-called tabloid representations in Tymoczko's representation, as we now explain.

We now describe in more detail the results of this manuscript. Hessenberg varieties in type A are subvarieties of the full flag variety $\Flags(\C^n)$ of nested sequences of linear subspaces in $\C^n$.  These varieties are parameterized by a choice of linear operator $\mathsf{X}\in \mathfrak{gl}(n,\C)$ and Hessenberg function $h: [n]=\{1,2,\ldots,n\} \to [n]=\{1,2,\ldots,n\}$.  (For details see Section~\ref{sec:background}.) For the purpose of this discussion it suffices to consider only the case when the operator is a regular semisimple operator $\mathsf{S}$ in $\mathfrak{gl}(n, \C)$; we denote the corresponding Hessenberg variety by $\Hess(\mathsf{S}, h)$. As mentioned above, Tymoczko defined \cite{Tym08} an action of the symmetric group $\Symm_n$ on $H^{2i}(\Hess(\mathsf{S},h))$ for each $i \geq 0$. From the work of Shareshian--Wachs, Brosnan--Chow, and Guay-Paquet it follows that in order to prove the (graded) Stanley--Stembridge conjecture, it suffices to 
prove that the cohomology 
$H^{2i}(\Hess(\mathsf{S},h))$ for each $i$ is a non-negative combination of the tabloid representations $M^\mu$ \cite[Part II, Section 7.2]{Ful97} of $\Symm_n$ for $\mu$ a partition of $n$. In other words, given the decomposition 
\begin{equation}\label{eq: decomp intro}
H^{2i}(\Hess(\mathsf{S},h)) = \sum_{\mu \vdash n}  c_{\mu, i} M^\mu
\end{equation}
in the representation ring $\mathcal{R}ep(\Symm_n)$ of $\Symm_n$, it suffices to show that the coefficients $c_{\mu,i}$ are non-negative.

We take a moment to mention here that the coefficients $c_{\mu,i}$ appearing in~\eqref{eq: decomp intro} were previously known to satisfy a matrix equation 
\[
\sum_{\mu \vdash n}  N_{\lambda \mu} c_{\mu, i} = y_{\lambda, i} 
\]
where the $y_{\lambda,i}$ are derived from Betti numbers of certain regular Hessenberg varieties and $N_{\lambda \mu} = \sum_{\nu \vdash n} K_{\nu, \lambda} K_{\nu, \mu}$ where the $K_{\nu, \lambda}, K_{\nu, \mu}$ are the Kostka numbers \cite[Section 2]{HaradaPrecup2017}. However, the Kostka numbers and the matrix $N$ are well-known to be computationally unwieldy, and it was not clear (to us) how to exploit the above matrix equation to prove the non-negativity of the $c_{\mu, i}$.  Another motivation for this manuscript was to find other relations satisfied by these coefficients which are more computationally tractable.

The main results of this manuscript are as follows. 
Let $n$ be a positive integer and $h: [n] \to [n]$ a Hessenberg function. Let $i\geq 0$ be a fixed non-negative integer and   $X_i = (c_{\mu,i})$ denote the (column) vector whose entries are the coefficients appearing in~\eqref{eq: decomp intro} above. 

\begin{itemize} 
\item In Corollary~\ref{corollary: graded matrix equation}, we derive a family of (new) matrix equations $A X_i = W_i$ satisfied by the column vectors $X_i$ for $i \geq 0$. The matrix $A = (A(\lambda, \mu))_{\lambda, \mu \vdash n}$ is obtained by counting certain subsets of the permutation group $\Symm_n$ using the data of a pair of partitions $\lambda, \mu \vdash n$, and is independent of both the choice of Hessenberg function $h$ and the integer $i \geq 0$. The column vectors $W_i$ are obtained by counting certain subsets of the permutation group $\Symm_n$ using the data of a partition $\lambda$, the Hessenberg function $h$, and the integer $i \geq 0$. 
\item In Theorem~\ref{theorem: upper-triangular}, we prove that the above matrix $A = (A(\lambda, \mu))$ is \textit{upper-triangular, with $1$'s along the diagonal}, with respect to an appropriately chosen linear order on the set $\Par(n)$ of partitions of $n$. We additionally prove an inductive formula for its matrix entries (Proposition~\ref{proposition: M-formulas Martha version}, cf. also Corollary~\ref{corollary: truncate A}). 
\item Generalizing results of \cite[Section 4]{HaradaPrecup2017}, we obtain a \text{sink set decomposition} of the subsets of $\Symm_n$ defining the column vector $W_i$ above (Proposition~\ref{prop: prop1}). As a consequence we obtain an inductive formula for the entries of $W_i$ for the special case in which $\lambda$ has the maximal possible number of parts (Theorem~\ref{thm: inductive formula}). 
\item As an application of the above results, we prove \cite[Conjecture 8.1]{HaradaPrecup2017}; more precisely, we obtain an inductive formula for the coefficients $c_{\mu, i}$ in~\eqref{eq: decomp intro} for the special case in which $\mu$ has the maximal possible number of parts (Theorem~\ref{theorem: max sink set case}), thus providing further evidence for the Stanley--Stembridge conjecture. 
\end{itemize} 

Some remarks are in order. Firstly, the main contribution of this manuscript are the 
new linear relations  in Corollary~\ref{corollary: graded matrix equation}; most particularly, the upper-triangularity of the matrix $A$ gives substantial reason to expect that these matrix equations will play a significant role in the solution to the full Stanley--Stembridge conjecture. Secondly, we are aware that there exist other proofs of our conjecture as stated in \cite[Conjecture 8.1]{HaradaPrecup2017}, using the coproduct structure on the ring of symmetric functions \cite{Lee-personal}. Thirdly, in his original paper on the subject, Stanley derives a different set of linear relations obeyed by the coefficients $c_\lambda$ \cite[Theorem 3.4, cf. also the erratum posted on Stanley's personal webpage]{Stanley1995, Stanley-erratum}, in which he uses a notion of \textit{sink sequences} which appear to be related to our sink-set decompositions. 

We now give a brief overview of the contents of the manuscript. Section~\ref{sec:background} is devoted to the setup and definitions of appropriate notation and terminology. In Section~\ref{sec:relations} we derive the new matrix equations $AX_i=W_i$, and in Section~\ref{sec:upper triangular} we prove that $A$ is upper-triangular, with $1$'s along the diagonal. We also derive the inductive formula for the numbers $A(\lambda, \mu)$. In Section~\ref{sec: inductive formula} we derive a separate inductive formula for the entries of the ``constant vector'' $W_i$. Finally, in Section~\ref{sec:closed formula} we prove Conjecture 8.1 from \cite{HaradaPrecup2017}.

\bigskip

\noindent \textbf{Acknowledgements.}  We are grateful for the hospitality and financial support of the Fields Institute for Research in the Mathematical Sciences in Toronto, Canada.  The Fields Research Fellowship allowed us to spend a fruitful month together at the Fields Institute  in August 2018, during which we had many of the ideas in this manuscript.


\section{Background and Terminology}\label{sec:background}

In this section we briefly recall the setting of our paper. For a more leisurely account we refer to \cite{HaradaPrecup2017}. 
Hessenberg varieties in Lie type A are subvarieties of the (full) flag variety
$\Flags(\C^n)$, which is the collection of sequences of nested linear subspaces of $\C^n$:
\[
\Flags(\C^n) := 
\{ V_{\bullet} = (\{0\} \subset  V_1 \subset  V_2 \subset  \cdots V_{n-1} \subset 
V_n = \C^n) \hsm \vert \hsm \dim_{\C}(V_i) = i \ \textrm{for all} \ i=1,\ldots,n\}. 
\]
A Hessenberg variety in $\Flags(\C^n)$ is specified by two pieces of data: a \textbf{Hessenberg function}, that is, a nondecreasing function $h:\{1,2,\ldots,n\} \rightarrow \{1,2,\ldots,n\}$ such that $h(i) \geq i$ for all $i$, and a choice of an element $\mathsf{X}$ in $\mathfrak{gl} (n,\C)$. We frequently write a Hessenberg function by listing its values in sequence,
i.e., $h = (h(1), h(2), \ldots, h(n))$. 
The \textbf{Hessenberg variety} associated to the linear operator $\mathsf{X}$ and Hessenberg function $h$ and is defined as
\begin{equation}\label{eq: definition Hess X h}
{\mathcal{H}ess}(\mathsf{X},h) = \{ V_{\bullet}\in \Flags(\C^n) \hsm\vert\hsm \mathsf{X}V_i \subseteq V_{h(i)} \textup{   for all   } i\}.
\end{equation}

When the linear operator $\mathsf{X}$ is chosen to be a regular semisimple operator $\mathsf{S}$ (i.e., diagonalizable with distinct eigenvalues), we refer to the corresponding Hessenberg variety $\Hess(\mathsf{S}, h)$ as a \textbf{regular semisimple Hessenberg variety}. Tymoczko defined an action of the symmetric group $\Symm_n$ on the cohomology of a regular semisimple Hessenberg variety $H^*(\Hess(\mathsf{S},h))$ which is called the \textbf{dot action} \cite{Tym08}.  This action preserves the grading on cohomology, so in fact $\Symm_n$ acts on each $H^{2i}(\Hess(\mathsf{S},h))$ for $i \geq 0$ (the cohomology is concentrated in even degrees).  For $\mu$ a partition of $n$, we denote by $M^\mu$ the complex vector space with basis given by the set of tabloids of shape $\mu$. Since $\Symm_n$ acts on the set of tabloids, $M^\mu$ is a $\Symm_n$-representation, and is called the tabloid representation (corresponding to $\mu$) \cite[Part II, Section 7.2]{Ful97}. 
It is well-known that the set of these tabloid representations form 
a $\Z$-basis for the representation ring $\mathcal{R}ep(\Symm_n)$ of $\Symm_n$, so we can decompose $H^*(\Hess(\mathsf{S},h))$ with respect to Tymoczko's dot action as follows: 
\begin{equation}\label{eq: decomp into Mlambda}
H^*(\Hess(\mathsf{S},h)) = \sum_{\mu\vdash n} c_\mu M^\mu \quad \textup{ and } 
\quad H^{2i}(\Hess(\mathsf{S},h)) = \sum_{\mu \vdash n} c_{\mu,i} M^{\mu} 
\end{equation}
where $c_\mu, c_{\mu,i} \in \Z$.

As explained in the Introduction, the motivation of this manuscript is to prove the graded Stanley--Stembridge conjecture. We refer the reader to \cite{HaradaPrecup2017} for more history; for the present manuscript we take the `graded Stanley--Stembridge conjecture' to mean the following. 

\begin{conjecture}\label{conj:Stanley--Stembridge} 
Let $n$ be a positive integer, $h: [n] \to [n]$ be a Hessenberg function, and $\mathsf{S}$ be a regular semisimple linear operator. Then 
the integers $c_{\mu,i}$ appearing in~\eqref{eq: decomp into Mlambda} are non-negative. 
\end{conjecture}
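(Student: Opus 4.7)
The plan is to leverage the matrix equation $AX_i = W_i$ (Corollary~\ref{corollary: graded matrix equation}) together with the upper-triangularity of $A$ (Theorem~\ref{theorem: upper-triangular}) to set up an inductive proof. Because $A$ has ones on the diagonal with respect to a fixed linear order $\succ$ on $\Par(n)$, writing the system out component-wise yields the back-substitution recursion
\[
c_{\mu, i} \;=\; W_i(\mu) \;-\; \sum_{\lambda \succ \mu} A(\mu, \lambda)\, c_{\lambda, i},
\]
where the sum ranges over partitions $\lambda$ strictly greater than $\mu$ in the chosen order. The plan is to prove non-negativity of the $c_{\mu,i}$ by induction on $\mu$ with respect to $\succ$, starting from the $\succ$-maximal partition and working downward.

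For the base case, $c_{\mu_{\max}, i} = W_i(\mu_{\max})$ outright, so non-negativity reduces to computing $W_i(\mu_{\max})$ using the sink-set decomposition (Proposition~\ref{prop: prop1}) together with the inductive formula of Theorem~\ref{thm: inductive formula}. In the special regime where $\mu$ has the maximal number of parts, which is the situation treated in Theorem~\ref{theorem: max sink set case}, I expect the recursion to collapse significantly: many matrix entries $A(\mu, \lambda)$ should vanish or be explicitly computable via Proposition~\ref{proposition: M-formulas Martha version}, and the surviving sum should match the inductive formula conjectured in \cite[Conjecture 8.1]{HaradaPrecup2017}, yielding non-negativity in that case.

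The hard part is the inductive step in full generality. The recursion expresses $c_{\mu, i}$ as a difference of non-negative integers, and there is no a priori reason for the result to remain non-negative; indeed, this is exactly the obstacle that has made the Stanley--Stembridge conjecture so resistant. My strategy would be to seek a combinatorial reinterpretation of the right-hand side: since both the entries $A(\lambda, \mu)$ and the entries $W_i(\lambda)$ are defined by counting certain subsets of $\Symm_n$, one would hope to realize the alternating sum as the cardinality of an explicitly constructed subset of permutations via an inclusion-exclusion (or sign-reversing involution) argument. Producing such a combinatorial model uniformly in $\mu$ is the principal obstacle, and is precisely what separates a proof of the max-parts case (the target of this manuscript) from a complete resolution of Conjecture~\ref{conj:Stanley--Stembridge}; accordingly, I would expect the present approach to yield strong partial progress rather than a full proof.
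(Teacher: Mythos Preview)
The statement you are addressing is a \emph{conjecture}, not a theorem, and the paper does not prove it; it is stated as the guiding open problem, and the paper's contributions (Corollary~\ref{corollary: graded matrix equation}, Theorem~\ref{theorem: upper-triangular}, Theorem~\ref{theorem: max sink set case}) are explicitly framed as partial progress toward it. Your own proposal recognizes this: you correctly identify that the back-substitution recursion
\[
c_{\mu,i} = W_i(\mu) - \sum_{\lambda \succ \mu} A(\mu,\lambda)\, c_{\lambda,i}
\]
expresses $c_{\mu,i}$ as a difference of non-negative quantities with no evident mechanism to control the sign, and you conclude that the approach ``yield[s] strong partial progress rather than a full proof.'' That assessment is accurate, and it is exactly where the paper stops as well. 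So there is no proof here to evaluate against the paper's proof, because neither you nor the paper supplies one.

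Regarding the partial result you sketch for partitions with the maximal number of parts: your outline matches the paper's strategy for Theorem~\ref{theorem: max sink set case}. The paper restricts the system $AX_i=W_i$ to $\Par_{\geq k}(n)$ with $k=ht(I_h)+1$, uses Theorem~\ref{lemma: vanishing coefficients} to kill the coefficients for more than $k$ parts, applies the sink-set decomposition (Proposition~\ref{prop: prop1}) and the bijection of Theorem~\ref{thm: inductive formula} to rewrite $\lvert \W_i(\mathbb{J}_\lambda,h)\rvert$ inductively, and then invokes Corollary~\ref{corollary: truncate A} to match the $A$-coefficients and conclude by uniqueness of the solution to the upper-triangular system. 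This is precisely the ``collapse'' you anticipate. One small correction: the base case of your induction, $\mu_{\max}=(1^n)$, does not immediately give non-negativity from $c_{(1^n),i}=W_i((1^n))$ without also knowing $(1^n)$ is relevant, i.e., that $ht(I_h)+1=n$; in general the relevant ``top'' of the recursion is governed by Theorem~\ref{lemma: vanishing coefficients}, not by the $\preceq$-maximal partition.
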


\subsection{Hessenberg data} For later use, we introduce some Lie-theoretic and combinatorial notation associated to Hessenberg varieties.  We fix a Hessenberg function $h: [n] \to [n]$.

Let $\mathfrak{t} \subseteq \mathfrak{gl} (n,\C)$ denote the Cartan subalgebra of diagonal matrices and let $t_i$ denote the coordinate on $\mathfrak{t}$ reading off the $(i,i)$-th matrix entry along the diagonal. Denote the root system of $\mathfrak{gl}(n,\C)$ by $\Phi$. Then the positive roots of $\mathfrak{gl} (n,\C)$ are $\Phi^+ = \{ t_i - t_j \hsm \vert \hsm 1 \leq i < j \leq n\}$ where {$t_i - t_j \in \Phi^+$} corresponds to the root space spanned by the elementary matrix $E_{ij}$, denoted { $\mathfrak{g}_{t_i - t_j}$}. Similarly, the negative roots of $\mathfrak{gl} (n,\C)$ are $\Phi^- = \{ t_i - t_j \hsm \vert \hsm 1 \leq j < i \leq n\}$. We denote the simple positive roots in $\Phi^+$ by $\Delta = \{\alpha_i := t_i - t_{i+1} \; \vert \; 1 \leq i \leq n-1\}$.  {Finally, it} is clear that each root $t_i-t_j\in \Phi$ can be uniquely identified with an ordered pair $(i,j)$, with $i \neq j$. We will make this identification below whenever it is notationally convenient.

For each permutation $w\in \Symm_n$, let
\[
\inv(w): = \{ (i,j) \hsm\vert\hsm i>j \textup{ and } w(i)<w(j) \}
\]
denote the set of inversions of $w$. Note that we adopt the nonstandard notation of listing the larger number in the pair $(i,j)\in \inv(w)$ first.  This is because we frequently identity $\inv(w)$ with a subset of negative roots. Under the correspondence between ordered pairs and roots discussed in the last paragraph, this set indexes the negative roots which become positive under the action of $w$.   This action can be expressed concretely as $w(t_i-t_j) = t_{w(i)}-t_{w(j)}$.  

The Hessenberg function $h: [n]\to [n]$ uniquely determines two subsets of roots  as follows:
\[
\Phi_h^-:= \{t_i-t_j \hsm\vert\hsm i>j \textup{ and } i\leq h(j)\} \;\textup{ and }\; \Phi_h:= \Phi_h^- \sqcup \Phi^+.
\]
Let $\inv_h(w) := \inv(w)\cap \Phi_h^-$; this set of inversions is used later to compute the Betti numbers of certain Hessenberg varieties.

Recall that an ideal $I$ of $\Phi^-$ is defined to be a collection of negative roots such that if $\alpha\in I$, $\beta\in \Phi^-$, and $\alpha+\beta\in \Phi^-$, then $\alpha+\beta\in I$.  The relation defining $\Phi_h^-$ immediately implies that
\[
I_h:= \Phi^- \setminus \Phi_h^-
\]
is an ideal in $\Phi^-$.  We call it the \textbf{ideal corresponding to $h$}.   

Given an ideal $I\subseteq \Phi^-$, its lower central series is the sequence of ideals defined inductively by
\[
{I}_1={I} \;\textup{ and } \; {I}_j =\{ \gamma+\beta \hsm\vert\hsm \gamma,\,\beta\in I_{j-1} \textup{ and } \gamma+\beta \in \Phi^- \} \; \textup{ for all $j\geq 2$}.
\]
The \textbf{height of an ideal $I$} is the length of its lower central series and we denote it by $ht(I)$.

\begin{example}\label{sec2-ex1} Let $h=(2,4,4,5,5)$.  Then 
\[
\Phi_h^- = \{ t_2-t_1, t_3-t_2, t_4-t_2, t_4-t_3, t_5-t_4 \} \; \textup{ and } \; I_h = \{ t_3-t_1, t_4-t_1, t_5-t_1, t_5-t_2, t_5-t_3  \}
\]
and $ht(I_h)=2$ since
\[
(I_h)_2 = \{ t_5-t_1 \} \; \textup{ and } \; (I_h)_3=\emptyset.
\]
\end{example}

The data of a Hessenberg function can also be encoded by way of a graph.  Given a Hessenberg function $h: [n]\to [n]$,  the \textbf{incomparability graph} associated to $h$ is the graph $\Gamma_h =(V_h, E_h)$ with vertex set $V_h=[n]$ and edge set $E_h=\{\{i,j\} \,\vert\, i<j \textup{ and } h(i)\geq j \}$.  Notice that the edges of $\Gamma_h$ correspond bijectively to the roots in $\Phi_h^-$.

\begin{example}\label{sec2-ex2} The graph corresponding to the Hessenberg function $h=(2,4,4,5,5)$ from Example~\ref{sec2-ex1} is 
\vspace*{.15in}
\[
\xymatrix{1 \ar@{-}[r]   & 2 & 3 \ar@{-}[l]\ar@{-}[r]   & 4 \ar@{-}@/_1.5pc/[ll]  \ar@{-}[r]  & 5}
\]
\end{example}

In many ways, the combinatorial structure of the graph $\Gamma_h$ and  the ideal $I_h$ mirror one another.  For example, \cite[Proposition 5.8]{HaradaPrecup2017} shows  that $m(\Gamma_h)=ht(I_h)+1$, where $m(\Gamma_h)$ denotes the maximum cardinality of an independent subset of vertices (that is, vertices which are pairwise nonadjacent) in $\Gamma_h$.  The reader can confirm this equation for the Hessenberg function $h=(2,4,4,5,5)$ appearing in Example~\ref{sec2-ex1} and Example~\ref{sec2-ex2}.  This correspondence is essential for the arguments of Section~\ref{sec: inductive formula} below. Furthermore, the structure of the ideal $I_h$, and that of the graph $\Gamma_h$, is closely connected  to the dot action representation.  The following theorem relates the multiplicities of the tabloid representations appearing in~\eqref{eq: decomp into Mlambda} with the height of $I_h$.  This is a restatement of \cite[Corollary 5.12]{HaradaPrecup2017}.

\begin{theorem} \label{lemma: vanishing coefficients} Let $c_{\mu}$ and $c_{\mu, i}$ be the coefficients appearing in~\eqref{eq: decomp into Mlambda}.  Then $c_{\mu}=c_{\mu,i}=0$ for all $\mu\vdash n$ with more than $m(\Gamma_h) = ht(I_h)+1$ parts.
\end{theorem}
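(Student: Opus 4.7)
The plan is to translate the question into one about the chromatic quasisymmetric function of $\Gamma_h$ via the Shareshian--Wachs identity, and then reduce to a short linear-algebra observation about the elementary symmetric basis.

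First, invoke the (now-proven) Shareshian--Wachs identity of Brosnan--Chow \cite{BrosnanChow2015} and Guay-Paquet \cite{Guay-Paquet2016}, which equates the graded Frobenius characteristic of Tymoczko's dot action with $\omega X_{\Gamma_h}(\mathbf{x},q)$, where $X_{\Gamma_h}(\mathbf{x},q)$ is the chromatic quasisymmetric function of $\Gamma_h$. Since $\mathrm{ch}(M^{\mu})=h_{\mu}$ and $\omega(e_{\mu})=h_{\mu}$, this identifies each coefficient $c_{\mu,i}$ in~\eqref{eq: decomp into Mlambda} with the coefficient $[e_{\mu}]\,X_{\Gamma_h}^{(i)}$, where $X_{\Gamma_h}^{(i)}$ denotes the coefficient of $q^i$ in $X_{\Gamma_h}(\mathbf{x},q)$ (and the ungraded $c_\mu$ is just $\sum_i c_{\mu,i}$, so will follow at once). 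Thus it suffices to show: if $\ell(\mu)>m(\Gamma_h)$, then $[e_{\mu}]\,X_{\Gamma_h}^{(i)}=0$ for every $i\geq 0$.

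Next, observe that $X_{\Gamma_h}^{(i)}$ is by definition a $\Z_{\geq 0}$-linear combination of monomials $x_\kappa=\prod_v x_{\kappa(v)}$ for proper colorings $\kappa\colon V(\Gamma_h)\to \Z_{>0}$. The exponent of any variable $x_c$ in such a monomial equals the cardinality of the color class $\kappa^{-1}(c)$, which is an independent set of $\Gamma_h$ and hence has size at most $m(\Gamma_h)$. Consequently, $X_{\Gamma_h}^{(i)}$ lies in the subspace $\Lambda_{m(\Gamma_h)}\subseteq \Lambda$ consisting of symmetric functions all of whose monomials have variable exponents at most $m(\Gamma_h)$, equivalently the subspace spanned in each degree by $\{m_{\lambda}:\lambda_1\leq m(\Gamma_h)\}$.

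The key observation is that $\Lambda_{m(\Gamma_h)}$ coincides with the span of $\{e_{\mu}:\ell(\mu)\leq m(\Gamma_h)\}$. One containment is immediate: the maximum exponent of any variable in a monomial of $e_{\mu}$ equals exactly $\ell(\mu)$, since each factor $e_{\mu_i}$ is square-free in every variable and all $\ell(\mu)$ factors can simultaneously select the same variable, so every such $e_\mu$ lies in $\Lambda_{m(\Gamma_h)}$. These $e_{\mu}$ are linearly independent (being part of a basis of $\Lambda$), and partition conjugation $\mu\leftrightarrow \mu'$ supplies a bijection between $\{\mu\vdash n : \ell(\mu)\leq m(\Gamma_h)\}$ and $\{\lambda\vdash n : \lambda_1\leq m(\Gamma_h)\}$, matching the dimensions in each degree; hence the two subspaces agree. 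It follows that the $e$-expansion of $X_{\Gamma_h}^{(i)}$ uses only $e_{\mu}$ with $\ell(\mu)\leq m(\Gamma_h)$, giving the desired vanishing. The identification $m(\Gamma_h)=ht(I_h)+1$ from \cite[Proposition 5.8]{HaradaPrecup2017} (which the paper cites just above the theorem) then produces the stated form of the bound. I do not anticipate a serious technical obstacle; the most delicate bookkeeping is the $\omega$-twist when passing from the $h$-expansion of $\omega X_{\Gamma_h}$ (the Frobenius characteristic of the dot action) to the $e$-expansion of $X_{\Gamma_h}$, and everything beyond the Shareshian--Wachs identity is elementary.
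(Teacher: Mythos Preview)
Your argument is correct. Note, however, that the paper does not actually give a proof of this statement: it simply records it as a restatement of \cite[Corollary~5.12]{HaradaPrecup2017}, so there is no in-paper argument to compare against directly.

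That said, your route---passing to the chromatic quasisymmetric function via the Shareshian--Wachs identity, observing that bounded color-class sizes force $X_{\Gamma_h}^{(i)}$ into the span of the $m_\lambda$ with $\lambda_1\le m(\Gamma_h)$, and then identifying that span with $\mathrm{span}\{e_\mu:\ell(\mu)\le m(\Gamma_h)\}$ by a conjugation-based dimension count---is the classical symmetric-function argument (essentially Stanley's observation for the ungraded chromatic symmetric function, carried over verbatim to each $q$-graded piece). The $\omega$-twist bookkeeping you flag is handled correctly: $\mathrm{ch}(M^\mu)=h_\mu$ and $\omega e_\mu=h_\mu$ give $c_{\mu,i}=[e_\mu]\,X_{\Gamma_h}^{(i)}$ on the nose. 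This is a clean, self-contained proof once the Shareshian--Wachs identity is in hand, and it avoids any Hessenberg-specific machinery beyond the identity itself.
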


\subsection{Partitions and subsets of simple positive roots}\label{subsec: partitions}

In this section we establish some combinatorial terminology and notation which we use below. 
Let $n$ be a positive integer. 

\begin{definition}\label{definition: J lambda} 
Let $\lambda \vdash n$. We define $J_{\lambda}$ to be the set of simple positive roots associated to $\lambda$ as follows: 
\[
J_{\lambda} := \Delta \setminus\{ \alpha_{\lambda_1}, \alpha_{\lambda_1+\lambda_2}, \ldots, \alpha_{\lambda_1+\cdots + \lambda_{k-1}} \}\subseteq \Delta.
\]
\end{definition} 

 We illustrate in Example~\ref{example: standard and column} how the above definition can be visualized. Note that
any partition of $n$ corresponds to a Young diagram with $n$ boxes,  and by slight abuse of notation we denote both the partition $\lambda=(\lambda_1, \lambda_2, \cdots, \lambda_k)$ and the corresponding Young diagram as $\lambda$. We also identify the set of simple positive roots $\Delta$ with the set $[n-1]:=\{1,2,\ldots,n-1\}$ by the association $i \mapsto \alpha_i$. 

\begin{example}\label{example: standard and column} 
Let $\lambda=(5,4,4,2)\vdash 15$. Using the simplest Young tableau of this diagram which fills the boxes of $\lambda$ with the integers $\{1,2,\ldots,n\}$ in order starting from the top left and reading across rows from left to right,  starting from the top row to the bottom row, as indicated below, the set $J_\lambda=\Delta \setminus \{\alpha_5, \alpha_9, \alpha_{13}\}$ corresponds to those boxes which are not at the rightmost end of a row. In the figure below, the boxes corresponding to simple roots that are contained in $J_\lambda$ are shaded in grey. 
\[
\begin{ytableau} 
*(grey) 1 & *(grey)2 & *(grey) 3 & *(grey) 4 & 5\\
*(grey)6 & *(grey) 7 & *(grey)8  & 9\\
*(grey)10 & *(grey)11 & *(grey)12 & 13 \\
*(grey)14 & 15 \\
\end{ytableau} 
\]
\end{example} 

Recall that the dual partition $\lambda^{\vee}$ of $\lambda$ is obtained by swapping the rows and the columns of the Young diagram of $\lambda$. We will also be interested in the set $J_{\lambda^{\vee}}$ corresponding to $\lambda^{\vee}$. In fact it will be useful to introduce notation for the complement of $J_{\lambda^{\vee}}$. We let 
\begin{equation}\label{eq: def mathbb j} 
\mathbb{J}_\lambda := \Delta \setminus J_{\lambda^{\vee}}.
\end{equation}

\begin{example} 
Continuing Example~\ref{example: standard and column}, let $\lambda=(5,4,4,2)\vdash 15$. Then 
it is straightforward to see that $\lambda^{\vee} = (4,4,3,3,1)$ and $J_{\lambda^{\vee}} = \Delta \setminus \{\alpha_4, \alpha_8, \alpha_{11}, \alpha_{14}\}$ and that $\mathbb{J}_\lambda := \Delta \setminus J_{\lambda^{\vee}} = \{\alpha_4, \alpha_8, \alpha_{11}, \alpha_{14}\}$. Below, the shaded boxes in the figure on the left correspond to the positive simple roots contained in $J_{\lambda^{\vee}}$, while the shaded boxes in the figure on the right correspond to those contained in $\mathbb{J}_{\lambda} := \Delta \setminus J_{\lambda^{\vee}}$. Note that the diagram for $\lambda$ is drawn, but the labelling of the boxes corresponds to the simplest Young tableau of the dual partition $\lambda^{\vee}$. The box labelled $15$ in the diagram is contained in neither $J_{\lambda^{\vee}}$ nor $\mathbb{J}_\lambda$ since both sets are contained in $[n-1]$, not $[n]$. 
\[
\begin{ytableau} 
*(grey) 1 & *(grey)5 & *(grey) 9 & *(grey) 12 & 15\\
*(grey)2 & *(grey) 6 & *(grey)10  & *(grey)13\\
*(grey)3 & *(grey)7 & 11 & 14 \\
4 & 8 \\
\end{ytableau} 
\quad \quad 
\begin{ytableau} 
 1 & 5 & 9 & 12 & 15\\
2 & 6 & 10  & 13\\
3 & 7 & *(grey) 11 & *(grey) 14 \\
*(grey)4 & *(grey) 8 \\
\end{ytableau} 
\]
\end{example}

We will also be interested in certain subdiagrams of a Young diagram $\lambda$. First recall that for $\lambda=(\lambda_1, \cdots, \lambda_k)$ a partition with $\lambda_k > 0$, the integer $k$ is often called the number of parts of $\lambda$ (also known as the \textit{length} of $\lambda$). By definition, the number of parts of $\lambda$ is equal to $\lambda_1^{\vee}$, the first entry of the dual partition $\lambda^{\vee}$. Thus we will sometimes use the notation $\lambda_1^{\vee}$ for the number of parts. 

We will also need to refer to the number of boxes in the bottom row of $\lambda$, which is equal to $\lambda_{\lambda_1^\vee}$; however, to avoid cumbersome notation we denote this as $r(\lambda)$ and call it the \textbf{bottom length} of $\lambda$. (Thus, if $\lambda$ has $k$ parts, then $r(\lambda) = \lambda_k$.) It follows from the definitions that the maximum number of boxes in a column of $\lambda$ is exactly ${\lambda}_1^{\vee}$, and there are precisely $r(\lambda)$ many such columns in $\lambda$.

In the inductive arguments given in the later sections, we will need to remove columns from $\lambda$ as follows. 

\begin{definition}\label{definition: lambda ell} 
Let $\lambda$ be a partition of $n$. Let $\ell$ be a positive integer. Then we denote by $\lambda[\ell]$ the partition obtained by removing the leftmost $\ell$ columns from the Young diagram associated to $\lambda$. 
\end{definition} 

\begin{example} 
Let $\lambda=(6,4,2,1)$ and let $\ell=2$. Then $\lambda[2]$ is the partition $\lambda=(4,2)$ obtained by removing the leftmost $2$ columns of $\lambda$. In the figure below, the boxes that are removed are shaded, and the white boxes correspond to the smaller partition $\lambda[2]$. 
\[
\begin{ytableau}
*(grey) & *(grey) & & & & \\
*(grey) & *(grey) & & \\
*(grey) & *(grey) \\
*(grey) 
\end{ytableau}
\]
\end{example}

\begin{remark} 
Using the terminology and notation introduced above, we note that if $\lambda$ is a partition of $n$ with exactly $k$ parts and $r=r(\lambda)$ and $\ell \in \Z$ with $1 \leq \ell \leq r-1$, then the partition $\lambda[\ell]$ still has $k$ parts, while $\lambda[r]$ is a partition of $n-rk$ which has strictly fewer than $k$ parts. 
\end{remark}

\begin{definition}\label{def: step}
Let $\lambda$ be a partition. We say a consecutive sequence $\{s, s+1, \ldots, s+t\} \subseteq [\lambda_1]$ is a \textbf{step of $\lambda$} if 
\[
{\lambda}_s^{\vee} = {\lambda}_{s+1}^{\vee} = \cdots = {\lambda}_{s+t}^{\vee}
\]
and if this sequence is maximal with respect to this property, i.e., assuming the quantities are defined, both ${\lambda}_{s-1}^{\vee} \neq {\lambda}_s^{\vee}$ and ${\lambda}_{s+t+1}^{\vee} \neq {\lambda}_{s+t}^{\vee}$ (with the convention that $\lambda_0^{\vee}=0$). 
\end{definition} 
The terminology above is motivated by viewing the Young diagram of $\lambda$ as an (upside-down) staircase. 

\begin{example} 
If $\lambda=(8, 5, 3, 2)$ so that ${\lambda}^{\vee} = (4,4,3,2,2,1,1,1)$
as in the diagram below
\[
\yng(8,5,3,2) 
\]
then there are four steps of $\lambda$, namely $A_1 = \{1,2\}, A_2=\{3\}, A_3 = \{4,5\}, A_4=\{6,7,8\}$. Each step gives the labels of a set of columns (starting from the left) of $\lambda$ with the same length. 
\end{example} 

It is clear that every column in $\lambda$ belongs to exactly one step of $\lambda$, giving us the following decomposition.

\begin{definition} 
The \textbf{step decomposition} of $\lambda\vdash n$ is the decomposition 
\[
[\lambda_1] = A_1 \sqcup A_2 \sqcup \cdots \sqcup A_{\mathrm{step}(\lambda)}
\]
where each $A_i$ is a step of $\lambda$ and $\mathrm{step}(\lambda)$ is a positive integer which we call the \textbf{number of steps} (or \textbf{step number}) of $\lambda$. We will always assume that the $A_i$ are listed in increasing order, i.e. $A_1 = \{1,2,\cdots, a_1\}$, $A_2 = \{a_1+1, \ldots, a_2\}$, and so on, for some sequence of integers $1 \leq a_1 < a_2 < \cdots < a_{\mathrm{step}(\lambda)} = \lambda_1$. 
\end{definition}


\section{Linear equations satisfied by representation multiplicities}\label{sec:relations}

The main result of this section, Theorem~\ref{theorem: linear relations}, gives a set of linear equations satisfied by the multiplicity coefficients $c_\mu$ and $c_{{\mu}, i}$ of equation~\eqref{eq: decomp into Mlambda}.  In Corollary~\ref{corollary: graded matrix equation} below, we also reformulate our main result into a family of matrix equations by applying Theorem~\ref{theorem: linear relations} to the special cases when the set $J$ below is chosen to be $\mathbb{J}_{\lambda}$ for a partition $\lambda$ of $n$. 
We follow the notation introduced in Section~\ref{sec:background}.

The following sets of permutations play a key role in the analysis below. 
\begin{definition}\label{def: WJh}
Let $J\subseteq \Delta$ be any subset of the set $\Delta$ of simple positive roots and $i\in \Z$, $i\geq 0$. We define 
\[
\W_i(J,h):=\{ w\in \Symm_n \hsm\vert\hsm w^{-1}(J)\subseteq \Phi_h \textup{ and } w^{-1}(\Delta\setminus J)\subseteq I_h \textup{ and } |\inv_h(w)|=i \} \subseteq \Symm_n.
\]
We also define 
\[
\W(J,h) :=\bigsqcup_i \W_i(J,h) =  \{ w\in \Symm_n \hsm\vert\hsm w^{-1}(J)\subseteq \Phi_h \textup{ and } w^{-1}(\Delta\setminus J)\subseteq I_h \} \subseteq \Symm_n
\]
where the union is taken over all $i$ such that $\W_i(J,h)\neq \emptyset$.
\end{definition} 

It will be convenient to introduce the following notation. 
Let $w \in \Symm_n$ be a permutation. Then 
\begin{equation}\label{eq: left descents} 
\mathrm{Des}_L(w) = \{ \alpha_i\in \Delta \hsm\vert\hsm   w^{-1}(i)>w^{-1}(i+1) \} 
\end{equation} 
is the set of \textbf{left descents of $w$} and
\begin{equation}\label{eq: right descents} 
\mathrm{Des}_R(w) = \{ \alpha_i\in \Delta \hsm\vert\hsm   w(i) > w(i+1) \} 
\end{equation}
is the set of \textbf{right descents of $w$}. Both of these sets have a natural interpretation in terms of the one-line notation for $w$.  The set of left descents corresponds to the set of ordered pairs $(i,i+1)$ such that $i+1$ appears before $i$ in the one-line notation for $w$.  Similarly, the set of right descents corresponds to the pairs $(i,i+1)$  such that, in the one-line notation of $w$, the $(i+1)$-st entry is less than the $i$-th entry.

For two subsets $J$ and $K$ of $\Delta$ we define
\begin{equation}\label{eq: def D(J,K)}
\mathcal{D}(J, K) := \{ w\in \Symm_n \hsm\vert\hsm \Des_L(w)=\Delta \setminus J \textup{ and } \Des_R(w)\subseteq \Delta\setminus K \}.
\end{equation}
The goal of this section is to prove the following. 

\begin{theorem}\label{theorem: linear relations}
Let $J\subseteq \Delta$ and $i\in \Z$, $i\geq 0$.  Then
\begin{eqnarray}\label{eqn: graded linear relations}
\lvert \W_i(J, h) \rvert = \sum_{\mu\vdash n} c_{\mu, i}\; \lvert \mathcal{D}(J, J_\mu) \rvert. 
\end{eqnarray}
and
\begin{eqnarray}\label{eqn: linear relations}
\lvert \W(J, h) \rvert = \sum_{\mu\vdash n} c_{\mu}\; \lvert \mathcal{D}(J, J_\mu) \rvert. 
\end{eqnarray}
\end{theorem}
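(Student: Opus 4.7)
The plan is to interpret both sides of \eqref{eqn: graded linear relations} as coefficients of fundamental quasi-symmetric functions in the expansion of $\mathrm{ch}(H^{2i}(\Hess(\mathsf{S}, h)))$, where $\mathrm{ch}$ denotes the Frobenius characteristic.

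For the right-hand side, I would use Gessel's classical expansion $h_\mu = \sum_{w} F_{n, \mathrm{Des}_L(w)}(\mathbf{x})$, where the sum ranges over the minimal-length coset representatives of $\Symm_n / \Symm_\mu$ and $F_{n,S}$ denotes the fundamental quasi-symmetric function. Since $\Symm_\mu = W_{J_\mu}$, these coset representatives are precisely those $w \in \Symm_n$ with $\mathrm{Des}_R(w) \subseteq \Delta \setminus J_\mu$, and grouping the sum by left descent set yields
\[
h_\mu = \sum_{J \subseteq \Delta} \lvert \mathcal{D}(J, J_\mu) \rvert \, F_{n, \Delta \setminus J}(\mathbf{x}).
\]
Since $\mathrm{ch}(M^\mu) = h_\mu$, combining this with \eqref{eq: decomp into Mlambda} shows that $\sum_{\mu \vdash n} c_{\mu, i} \lvert \mathcal{D}(J, J_\mu) \rvert$ is exactly the coefficient of $F_{n, \Delta \setminus J}$ in the fundamental expansion of $\mathrm{ch}(H^{2i}(\Hess(\mathsf{S}, h)))$.

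For the left-hand side, I would invoke the Brosnan--Chow/Guay-Paquet theorem identifying $\sum_i q^i \mathrm{ch}(H^{2i}(\Hess(\mathsf{S}, h)))$ with $\omega \, X_{\Gamma_h}(\mathbf{x}, q)$, combined with the Shareshian--Wachs fundamental-basis expansion of the chromatic quasi-symmetric function, which has the form $\sum_w q^{|\inv_h(w)|} F_{n, \mathcal{S}(w)}(\mathbf{x})$ for an appropriate descent statistic $\mathcal{S}(w)$. After applying $\omega$ (using $\omega F_{n, S} = F_{n, \Delta \setminus S}$), one should identify the resulting statistic with $\Delta \setminus J(w)$, where $J(w)$ denotes the unique subset of $\Delta$ for which $w \in \W(J(w), h)$ (well-defined since $\Phi = \Phi_h \sqcup I_h$ forces a unique such $J$ to exist for each $w$). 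A direct unpacking of Definition~\ref{def: WJh} yields
\[
J(w) = \{\alpha_k \in \Delta \mid w^{-1}(k) \leq h(w^{-1}(k+1))\},
\]
a Hessenberg-refinement of the complement of $\mathrm{Des}_L(w)$. Once this identification is established, the coefficient of $F_{n, \Delta \setminus J}$ in $\mathrm{ch}(H^{2i}(\Hess(\mathsf{S}, h)))$ equals $\lvert \{ w \in \Symm_n : |\inv_h(w)| = i, J(w) = J \} \rvert = \lvert \W_i(J, h) \rvert$.

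Comparing coefficients then proves \eqref{eqn: graded linear relations}, and summing over $i \geq 0$ yields \eqref{eqn: linear relations}. The hard part will be the final matching step: verifying that the descent statistic arising from the Shareshian--Wachs expansion (after applying $\omega$) is precisely $\Delta \setminus J(w)$. This requires careful conversion between left and right descent conventions and the effect of $\omega$ on fundamentals, and possibly the involution $w \mapsto w^{-1}$ to swap the roles. A more self-contained alternative would compute the fundamental expansion of $\mathrm{ch}(H^{2i}(\Hess(\mathsf{S}, h)))$ directly from Tymoczko's GKM model and the Bia\l ynicki-Birula basis indexed by $\Symm_n$ with grading by $|\inv_h(\cdot)|$; this avoids chromatic quasi-symmetric functions entirely but demands comparable bookkeeping with descent conventions.
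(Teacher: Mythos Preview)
Your plan is plausible and, once the bookkeeping is completed, would give a valid proof; but it is a genuinely different route from the paper's. The paper never touches quasi-symmetric functions. Instead it proves two M\"obius-inversion identities on the Boolean lattice of subsets $I \supseteq J$ of $\Delta$: one expressing $|\W_i(J,h)|$ as an alternating sum of Betti numbers $\dim H^{2i}(\Hess(\mathsf{X}_I,h))$ of \emph{regular} Hessenberg varieties (via Precup's Betti formula), and one expressing $|\mathcal{D}(J,J_\mu)|$ as the same alternating sum of invariant dimensions $\dim (M^\mu)^{\Symm_I}$. These are linked by the Brosnan--Chow identity $\dim H^{2i}(\Hess(\mathsf{S},h))^{\Symm_I} = \dim H^{2i}(\Hess(\mathsf{X}_I,h))$ together with the decomposition~\eqref{eq: decomp into Mlambda}; the theorem drops out in four lines. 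Note that the paper uses only this ``pointwise'' invariant-dimension statement from Brosnan--Chow, not the full identification of $\mathrm{ch}(H^*)$ with $\omega X_{\Gamma_h}$.

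What each approach buys: the paper's argument is short, self-contained, and entirely elementary once the Brosnan--Chow input and Precup's Betti formula are granted. Your approach is more conceptual---it explains \emph{why} both sides agree, as two computations of the same $F$-coefficient---but it imports heavier machinery (the full Shareshian--Wachs conjecture plus their explicit $F$-expansion of $X_{\Gamma_h}$), and the step you correctly flag as hard really is delicate: the interaction of $\omega$ with the fundamental basis on QSym is not simply $F_{n,S} \mapsto F_{n,\Delta\setminus S}$ as a map of quasi-symmetric functions, only as a recipe valid on symmetric inputs, and matching the Shareshian--Wachs $P$-descent statistic to your $\Delta \setminus J(w)$ after applying $\omega$ and the substitution $w \leftrightarrow w^{-1}$ requires care with several competing conventions. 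As written your proposal stops short of carrying this out, whereas the paper's M\"obius-inversion argument is complete.
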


We organize this section as follows. In Section~\ref{subsec: linear relations} we prove Theorem~\ref{theorem: linear relations} modulo two elementary lemmas, and in Section~\ref{subsec: inclusion-exclusion} we record the proofs of these two lemmas. Finally, in Section~\ref{subsec: matrix equation} we re-organize a certain subset of these linear relations obtained in Theorem~\ref{theorem: linear relations}, namely, those for which $J=\mathbb{J}_\lambda$, into a set of matrix equations, one for each $i \geq 0$.

\subsection{Proof of Theorem~\ref{theorem: linear relations}}\label{subsec: linear relations}

The proof of Theorem~\ref{theorem: linear relations} relies on three results which we list below. The first is a result of Brosnan--Chow \cite{BrosnanChow2015} which relates the representation multiplicities in~\eqref{eq: decomp into Mlambda} to the Betti numbers of certain regular Hessenberg varieties.  The last two are straightforward inclusion-exclusion arguments. 

 We first state a theorem of Brosnan and Chow \cite[Theorem 127]{BrosnanChow2015}. For a given subset $J\subseteq \Delta$, let $\mathsf{X}_J\in \mathfrak{gl}(n, \C)$ be the regular element such that $\mathsf{X}_J = \mathsf{N}_J + \mathsf{S}_J$ where  
\[
\mathsf{N}_J = \sum_{\alpha_i\in J} E_{i,i+1}
\]
and $\mathsf{S}_J$ is a semisimple linear operator such that $\mathsf{N}_J$ is a regular nilpotent element in the Levi subalgebra $\mathfrak{z}_{\mathfrak{g}}(\mathsf{S}_J)$.  A Hessenberg variety associated to such a regular operator $\mathsf{X}_J$ as above is called a regular Hessenberg variety. Moreover, let $\Symm_J :=\left<s_{\alpha}: \alpha\in J\right>$ be the subgroup of the symmetric group generated by the simple reflections corresponding to the simple roots in $J$. The theorem of Brosnan and Chow identifies the dimension of the subspaces $H^{2i}(\Hess(\mathsf{S},h))^{\Symm_{J}}$ with the dimension of the cohomology of a  certain regular Hessenberg variety.

\begin{theorem}\label{thm: BrosnanChow main thm}  (Brosnan--Chow, \cite[Theorem 127]{BrosnanChow2015})
Let $n$ be a positive integer and $h: [n] \to [n]$ a Hessenberg function. Let $\mathsf{X}_{J}$ and $\Symm_J$ for $J\subseteq \Delta$ be as above, and $\mathsf{S}$ be a regular semisimple operator. Then for each non-negative integer $i$,  we have
\[
\dim (H^{2i}(\Hess(\mathsf{S}, h)))^{\Symm_{J}} = \dim H^{2i}(\Hess(\mathsf{X}_J, h)).
\]
\end{theorem}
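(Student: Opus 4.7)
The plan is to follow the original strategy of Brosnan and Chow, using the family structure of Hessenberg varieties together with a monodromy / local invariant cycle argument. The first step is to set up the universal Hessenberg family $\pi: \mathcal{H}(h) \to \mathfrak{gl}(n,\C)$ whose fiber over $X \in \mathfrak{gl}(n,\C)$ is $\Hess(X, h)$. I would first establish that over the locus of regular elements (those whose centralizer in $\mathfrak{gl}(n,\C)$ has minimal dimension $n$), $\pi$ is flat with smooth fibers, and that restricted to the regular semisimple locus it is in fact a smooth proper fiber bundle. Since both $\mathsf{S}$ and $\mathsf{X}_J$ lie in the regular locus, the family is well-behaved in a neighborhood of any path joining them.

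The central ingredient is the identification of Tymoczko's dot action with a monodromy representation. Via the Chevalley isomorphism the adjoint quotient of $\mathfrak{gl}(n,\C)$ is identified with $\mathfrak{t}/\Symm_n$, and the fundamental group of the regular semisimple locus in this quotient surjects onto $\Symm_n$. One then shows — this is the main technical result of Brosnan and Chow themselves — that the monodromy action of $\Symm_n$ on the local system $R^{2i}\pi_* \C$ over the regular semisimple base coincides with Tymoczko's dot action on $H^{2i}(\Hess(\mathsf{S}, h))$. Next, I would construct an explicit holomorphic deformation from $\mathsf{S}$ to $\mathsf{X}_J$ inside the regular locus, obtained by allowing the eigenvalues of a regular semisimple element to coalesce in the pattern prescribed by the Levi $\mathfrak{z}_\mathfrak{g}(\mathsf{S}_J)$. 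The local monodromy around the discriminant singularities created near $\mathsf{X}_J$ is generated precisely by the simple reflections $s_\alpha$ with $\alpha \in J$, and these generate the subgroup $\Symm_J \subseteq \Symm_n$.

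With this setup, the theorem follows from Deligne's local invariant cycle theorem applied to the restriction of $\pi$ to a small analytic disk through $\mathsf{X}_J$. Specialization from a nearby regular semisimple fiber $\Hess(\mathsf{S}, h)$ to the special fiber $\Hess(\mathsf{X}_J, h)$ produces a map whose image in each degree equals the space of local-monodromy invariants, which by the preceding paragraph is exactly $H^{2i}(\Hess(\mathsf{S}, h))^{\Symm_J}$. Because the total space of the restricted family and the special fiber are both smooth, this specialization map is surjective onto the invariants; combined with the fact that both $H^{2i}(\Hess(\mathsf{S},h))$ and $H^{2i}(\Hess(\mathsf{X}_J,h))$ are pure of Tate type (so that the monodromy filtration splits), a dimension count gives the desired equality.

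The main obstacle is twofold. First, one must verify the smoothness of $\Hess(\mathsf{X}_J, h)$ for arbitrary regular, but not semisimple, $\mathsf{X}_J$, and the flatness of $\pi$ on a neighborhood of $\mathsf{X}_J$; these are needed in order to apply the local invariant cycle theorem cleanly. Second, and more delicate, one must precisely identify the local monodromy subgroup with exactly $\Symm_J$, which requires a careful path-by-path analysis tracking how each eigenvalue coalescence along the deformation produces the specific simple reflection $s_\alpha$ for $\alpha \in J$, and verifying that no additional relations are introduced.
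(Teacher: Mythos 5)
You should first note that the paper contains no proof of this statement at all: it is quoted from Brosnan--Chow \cite[Theorem 127]{BrosnanChow2015} and used as a black box, so the only meaningful comparison is with the original argument of Brosnan and Chow --- and your sketch is indeed an outline of exactly that strategy (universal family over the regular locus, dot action identified with monodromy over the regular semisimple locus, degeneration to $\mathsf{X}_J$, local invariant cycle theorem). As a roadmap it is the right one, but as a proof it has a concrete error and otherwise defers the hard content. The error: you propose to ``verify the smoothness of $\Hess(\mathsf{X}_J,h)$'' and then use smoothness of the special fiber in the local invariant cycle step and the dimension count. Regular but non-semisimple Hessenberg varieties are in general singular --- already for $J=\Delta$ and $h=(2,3,\ldots,n,n)$ the variety $\Hess(\mathsf{X}_J,h)$ is (up to a central shift) the Peterson variety, which is singular for $n\geq 3$ --- so this verification cannot succeed. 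What the argument actually requires is (i) smoothness of the total space of the restricted family together with properness, which is what the local invariant cycle theorem needs to give surjectivity of the specialization map $H^{2i}(\Hess(\mathsf{X}_J,h))\to H^{2i}(\Hess(\mathsf{S},h))^{\Symm_J}$, and (ii) purity of $H^{*}(\Hess(\mathsf{X}_J,h))$, which does \emph{not} come from smoothness but from the fact that regular Hessenberg varieties admit pavings by affine cells (Precup's paving theorem, the same result underlying the Betti number formula recorded as Theorem~\ref{prop: betti} in this paper); purity is what forces the surjection onto the local invariants to be an isomorphism. Your sentence asserting purity ``of Tate type'' is the correct ingredient, but in your write-up it is presented as a consequence of a smoothness that fails.

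Beyond that, the two steps you yourself label as the main obstacles --- that the monodromy representation on $R^{2i}\pi_*\C$ coincides with Tymoczko's dot action, and that the local monodromy of a suitable degeneration to $\mathsf{X}_J$ is exactly the subgroup $\Symm_J$ generated by $\{s_\alpha : \alpha\in J\}$ --- are precisely the technical core of Brosnan--Chow's paper; your proposal cites the first and only gestures at the second. So the submission should be read as a correct reconstruction of the architecture of the known proof, with one false supporting claim (smoothness of the special fiber) that must be replaced by the affine-paving/purity argument, rather than as a self-contained proof.
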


The  next two results are straightforward inclusion-exclusion arguments which are based on 
a combinatorial formula for the Betti numbers of regular Hessenberg varieties obtained by the second author \cite{Precup2016}.

\begin{lemma}\label{lemma: inclusion-exclusion1}  Let $J\subseteq \Delta$, $h$ any Hessenberg function, and $i \in \Z, i \geq 0$. Then 
\begin{eqnarray}\label{eqn: inclusion-exclusion1}
|\W_i(J, h)| = \sum_{I\,:\,  J\subseteq I} (-1)^{|I|-|J|} \dim(H^{2i}(\Hess(\mathsf{X}_I,h))).
\end{eqnarray}
\end{lemma}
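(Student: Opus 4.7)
The plan is to apply Möbius inversion on the Boolean lattice of subsets of $\Delta$, combining a combinatorial Betti-number formula for regular Hessenberg varieties (from \cite{Precup2016}) with a natural disjoint-union decomposition of the indexing sets.

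First, I would observe that $\Phi_h$ and $I_h$ partition the root system $\Phi$. Indeed, $\Phi_h = \Phi_h^- \sqcup \Phi^+$ and $I_h = \Phi^- \setminus \Phi_h^-$ by definition, so $\Phi_h \sqcup I_h = \Phi$. Consequently, for each $w \in \Symm_n$ and each simple root $\alpha \in \Delta$, $w^{-1}(\alpha)$ lies in exactly one of $\Phi_h$ or $I_h$. Define
\[
K(w) := \{\alpha \in \Delta \hsm\vert\hsm w^{-1}(\alpha) \in \Phi_h\} \subseteq \Delta.
\]
Unwinding Definition~\ref{def: WJh}, one sees that $w \in \W_i(J,h)$ if and only if $K(w) = J$ and $|\inv_h(w)| = i$.

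Next, I would invoke the combinatorial formula of \cite{Precup2016} expressing the Betti numbers of a regular Hessenberg variety $\Hess(\mathsf{X}_I, h)$ in terms of a set of permutations of the form
\[
\dim H^{2i}(\Hess(\mathsf{X}_I, h)) = |\mathcal{U}_i(I, h)|, \quad \text{where } \mathcal{U}_i(I, h) := \{w \in \Symm_n \hsm\vert\hsm I \subseteq K(w) \text{ and } |\inv_h(w)| = i\}.
\]
Partitioning $\mathcal{U}_i(I, h)$ according to the exact value of $K(w)$ yields the disjoint decomposition
\[
\mathcal{U}_i(I, h) = \bigsqcup_{J \supseteq I} \W_i(J, h), \qquad \text{hence} \qquad |\mathcal{U}_i(I,h)| = \sum_{J \supseteq I} |\W_i(J,h)|.
\]

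Finally, Möbius inversion on the Boolean lattice of subsets of $\Delta$ (whose Möbius function is $\mu(J, I) = (-1)^{|I| - |J|}$ for $J \subseteq I$) inverts the identity above to give
\[
|\W_i(J, h)| = \sum_{I \supseteq J} (-1)^{|I| - |J|} |\mathcal{U}_i(I, h)| = \sum_{I \supseteq J} (-1)^{|I| - |J|} \dim(H^{2i}(\Hess(\mathsf{X}_I, h))),
\]
which is the desired formula. The only real obstacle is confirming that Precup's Białynicki-Birula--style cell parameterization of $\Hess(\mathsf{X}_I, h)$ matches the set $\mathcal{U}_i(I,h)$ as described; once that identification is secured, the remainder of the argument is a clean application of inclusion-exclusion.
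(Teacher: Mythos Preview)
Your proof is correct and is essentially the same argument as the paper's: both use Precup's Betti-number formula (the paper's Theorem~\ref{prop: betti}) to identify $\dim H^{2i}(\Hess(\mathsf{X}_I,h))$ with the count of permutations satisfying $w^{-1}(I)\subseteq \Phi_h$, observe that this count decomposes as a sum of $|\W_i(J,h)|$ over $J\supseteq I$, and then apply M\"obius inversion on the Boolean lattice. The paper phrases the same computation via indicator functions $f_I$ and $g_I$ on the set $\{w:|\inv_h(w)|=i\}$, whereas you use the map $K(w)$ and a set partition; these are cosmetically different packagings of the identical idea.
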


\begin{lemma}\label{lemma: inclusion-exclusion2}  Let $\mu$ be a partition of $n$ and $J\subseteq \Delta$. Then
\begin{eqnarray}\label{eqn: inclusion-exclusion2}
|\mathcal{D}(J, J_\mu)| = \sum_{I: J\subseteq I} (-1)^{|I|-|J|} \dim(M^{\mu})^{\Symm_I}.
\end{eqnarray}
\end{lemma}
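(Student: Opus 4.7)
The plan is to reduce the identity to Möbius inversion over the Boolean lattice of subsets of $\Delta$. The first step is to reinterpret $\dim(M^\mu)^{\Symm_I}$ as a count of certain permutations. Since the tabloids of shape $\mu$ form a basis of $M^\mu$ on which $\Symm_n$ acts transitively with point stabilizer the Young subgroup $\Symm_{J_\mu}$, we have an isomorphism $M^\mu \cong \mathrm{Ind}_{\Symm_{J_\mu}}^{\Symm_n} \mathbf{1}$. Consequently a basis of $(M^\mu)^{\Symm_I}$ is given by the $\Symm_I$-orbits on $\Symm_n / \Symm_{J_\mu}$, i.e.\ by the set of double cosets $\Symm_I \backslash \Symm_n / \Symm_{J_\mu}$.

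Next, I would invoke the standard fact (for finite Coxeter groups, here specialized to $\Symm_n$) that each such double coset contains a unique element of minimal length, and that these minimal-length representatives are characterized as precisely those permutations $w \in \Symm_n$ satisfying $\Des_L(w) \subseteq \Delta \setminus I$ and $\Des_R(w) \subseteq \Delta \setminus J_\mu$. Setting
\[
\mathcal{E}(I, J_\mu) := \{ w \in \Symm_n : \Des_L(w) \subseteq \Delta \setminus I \text{ and } \Des_R(w) \subseteq \Delta \setminus J_\mu \},
\]
this gives $\dim(M^\mu)^{\Symm_I} = |\mathcal{E}(I, J_\mu)|$. Partitioning $\mathcal{E}(I, J_\mu)$ according to the value $J := \Delta \setminus \Des_L(w)$, which may be any subset of $\Delta$ containing $I$, yields the ``summed'' identity
\[
\dim(M^\mu)^{\Symm_I} \; = \sum_{J \,:\, I \subseteq J \subseteq \Delta} |\mathcal{D}(J, J_\mu)|.
\]

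Finally, Möbius inversion on the Boolean lattice $2^\Delta$, whose Möbius function on an interval $[J, I]$ is $(-1)^{|I|-|J|}$, converts the displayed identity into the desired formula \eqref{eqn: inclusion-exclusion2}. I do not anticipate a serious obstacle here beyond careful bookkeeping between left versus right descents and between $J$ and its complement $\Delta \setminus J$; the descent characterization of minimal-length double-coset representatives is standard for Coxeter groups and can be cited directly from a reference such as Björner--Brenti.
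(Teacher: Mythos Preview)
Your proposal is correct and follows essentially the same approach as the paper: both establish the descent-set description of $\dim(M^{\mu})^{\Symm_I}$ (which the paper cites as the well-known identity~\eqref{eq:1}, and which you justify via induced representations and minimal double-coset representatives) and then apply M\"obius inversion on the Boolean lattice. The only cosmetic difference is that the paper phrases the inversion via indicator functions $f_I,g_I$ on $\mathcal{A}_\mu$, whereas you write the summed identity directly; be careful, though, that your summation variable $J$ clashes with the $J$ fixed in the lemma statement, so renaming it would improve clarity.
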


We now give a proof of Theorem~\ref{theorem: linear relations}, assuming Lemma~\ref{lemma: inclusion-exclusion1} and Lemma~\ref{lemma: inclusion-exclusion2}.

\begin{proof}[Proof of Theorem~\ref{theorem: linear relations}] 
We have:
\begin{eqnarray*}
|\W_i(J, h)| &=& \sum_{I: J \subseteq I} (-1)^{|I|-|J|} \dim(H^{2i}(\Hess(\mathsf{X}_I,h))) \quad\quad \textup{by Lemma~\ref{lemma: inclusion-exclusion1}}\\
&=&  \sum_{I: J \subseteq I} (-1)^{|I|-|J|} \sum_{\mu\vdash n} c_{\mu, i} \dim(M^{\mu})^{\Symm_I} \quad \quad \textup{by Theorem~\ref{thm: BrosnanChow main thm}}\\
&=& \sum_{\mu\vdash n} c_{\mu,i}\left( \sum_{I: J \subseteq I} (-1)^{|I|-|J|} \dim(M^{\mu})^{\Symm_I} \right) \\
&=&  \sum_{\mu\vdash n} c_{\mu,i} |\mathcal{D}(J, J_{\mu})| \quad \textup{by Lemma~\ref{lemma: inclusion-exclusion2}}
\end{eqnarray*}
which proves equation~\eqref{eqn: graded linear relations}.  Equation~\eqref{eqn: linear relations} follows directly from~\eqref{eqn: graded linear relations} by summing over $i$. 
\end{proof}

\subsection{M\"obius inversion on the Boolean lattice}\label{subsec: inclusion-exclusion}

We now give proofs of the elementary lemmas used in the previous section. Both follow from an application of the well-known M\"obius inversion formula on the Boolean lattice, which is a version of the principle of inclusion-exclusion. We will need the following Betti number formula \cite[Lemma 1]{Precup2016}. 

\begin{theorem}\label{prop: betti} 
Let $J \subseteq \Delta$ and $h$ be any Hessenberg function. Then for each non-negative integer $i$,  we have
\[
{\dim}(H^{2i}(\Hess(\mathsf{X}_J, h))) = \lvert \{w \in \Symm_n\hsm\vert\hsm w^{-1}(J) \subseteq \Phi_h \textup{ and } |\inv_h(w)|=i \} \rvert. 
\]
\end{theorem}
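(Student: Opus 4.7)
\medskip

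\noindent\textbf{Proof proposal for Theorem~\ref{prop: betti}.}
The plan is to produce an affine paving of the regular Hessenberg variety $\Hess(\mathsf{X}_J,h)$ whose cells are in bijection with the permutations on the right-hand side, and whose real dimensions match $2|\inv_h(w)|$. Since regular Hessenberg varieties paved by affines have cohomology concentrated in even degrees, counting cells of complex dimension $i$ will then give the Betti number in degree $2i$.

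First I would exploit the torus action. By construction $\mathsf{X}_J = \mathsf{N}_J + \mathsf{S}_J$ with $\mathsf{N}_J$ a regular nilpotent element in the Levi subalgebra $\mathfrak{z}_\mathfrak{g}(\mathsf{S}_J)$; let $T_J \subseteq GL(n,\C)$ be the maximal torus centralizing $\mathsf{S}_J$ (so $T_J$ is conjugate to the standard diagonal torus and its Lie algebra is exactly $\mathfrak{z}_\mathfrak{g}(\mathsf{S}_J) \cap \mathfrak{t}$). Since $T_J$ commutes with $\mathsf{S}_J$ and preserves the root space decomposition of the Levi, $T_J$ acts on $\Hess(\mathsf{X}_J, h)$. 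I would then identify the fixed point set: by the standard identification of $T_J$-fixed flags with permutation flags $wB$, a flag $wB$ lies in $\Hess(\mathsf{X}_J,h)$ precisely when $\mathsf{X}_J \cdot w B \subseteq w\cdot \mathfrak{b}_h$, where $\mathfrak{b}_h = \mathfrak{b}\oplus \bigoplus_{\alpha\in\Phi_h^-}\mathfrak{g}_\alpha$; because the only relevant contribution comes from $\mathsf{N}_J = \sum_{\alpha_i\in J} E_{i,i+1}$, this condition translates exactly to $w^{-1}(\alpha)\in\Phi_h$ for every $\alpha\in J$, i.e. $w^{-1}(J)\subseteq \Phi_h$.

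Next I would produce the affine paving by choosing a generic one-parameter subgroup $\lambda: \C^*\to T_J$ whose fixed points coincide with $\Hess(\mathsf{X}_J,h)^{T_J}$ and applying the Białynicki--Birula theorem to get cells
\[
C_w = \{ x \in \Hess(\mathsf{X}_J, h) \hsm\vert\hsm \lim_{t\to 0}\lambda(t)\cdot x = wB\}
\]
indexed by the fixed permutations above. The key computational step is to compute $\dim_{\C} C_w$. I would do this by identifying the tangent space $T_{wB}\Hess(\mathsf{X}_J,h)$ as the subspace of $T_{wB}\Flags(\C^n) \cong \bigoplus_{\alpha\in\Phi^-} \mathfrak{g}_{w(\alpha)}$ cut out by the Hessenberg condition, and showing that the positive weight subspace (with respect to $\lambda$) is precisely spanned by the root spaces $\mathfrak{g}_{w(\alpha)}$ for $\alpha\in\Phi^-$ with $w(\alpha)\in\Phi^+$ and $\alpha\in\Phi^-_h$; this set is, by definition, $\inv_h(w)$. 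Thus $\dim_\C C_w = |\inv_h(w)|$.

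Finally I would assemble the pieces: the affine paving gives $H^{2i}(\Hess(\mathsf{X}_J, h))$ as a free abelian group of rank equal to the number of cells $C_w$ with $\dim_\C C_w = i$, which is exactly the cardinality of the set on the right-hand side of the claimed formula. The main obstacle I anticipate is the dimension computation of $C_w$: one must verify carefully that the Hessenberg condition cuts out precisely the expected weight subspace at each fixed point, which requires checking that $\mathsf{X}_J$ acts with nonzero components along the correct root directions. Once that is done (or invoked from \cite{Precup2016}), the statement follows at once.
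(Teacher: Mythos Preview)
The paper does not prove this statement; it simply quotes \cite[Lemma~1]{Precup2016}. Your proposal attempts an independent proof via Bia\l{}ynicki--Birula decomposition, but there is a genuine gap in the torus-action step.

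You assert that the maximal torus $T_J$ centralizing $\mathsf{S}_J$ acts on $\Hess(\mathsf{X}_J, h)$. This fails whenever $J \neq \emptyset$: for a torus to act on the Hessenberg variety it must centralize $\mathsf{X}_J = \mathsf{S}_J + \mathsf{N}_J$, hence in particular $\mathsf{N}_J$. But the diagonal torus does not commute with the regular nilpotent $\mathsf{N}_J$ of the Levi; concretely, $t \cdot E_{i,i+1} \cdot t^{-1} = (t_i/t_{i+1}) E_{i,i+1}$. The torus that \emph{does} act is the center $Z(L_J)$ of the Levi subgroup, but its fixed locus in $\Flags(\C^n)$ is not the finite set of permutation flags: it is a disjoint union of translates of $L_J/(L_J\cap B)$, each a product of smaller flag varieties, indexed by $\Symm_n/\Symm_J$. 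Consequently the Bia\l{}ynicki--Birula cells are not automatically affine spaces indexed by individual permutations, and your fixed-point identification and dimension count both break down.

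The argument in \cite{Precup2016} (building on Tymoczko's and Precup's earlier affine-paving results) proceeds differently: one intersects $\Hess(\mathsf{X}_J, h)$ with the Schubert cells $BwB/B$ and shows directly, via an explicit row-reduction/flow argument in coordinates on each cell, that the intersection is nonempty precisely when $w^{-1}(J) \subseteq \Phi_h$, and in that case is an affine space of complex dimension $|\inv_h(w)|$. No torus action on $\Hess(\mathsf{X}_J,h)$ is needed. A correct Bia\l{}ynicki--Birula argument is possible, but it is a two-stage affair: first use a one-parameter subgroup in $Z(L_J)$ to reduce to regular nilpotent Hessenberg varieties inside the Levi factors at each fixed component, then invoke the known affine paving for those; your single-step version skips this and is not valid as written.
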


Using the above, we first prove Lemma~\ref{lemma: inclusion-exclusion1}.

\begin{proof}[Proof of Lemma~\ref{lemma: inclusion-exclusion1}]
Let $\W_i:= \{w\in \Symm_n \hsm\vert\hsm \lvert\inv_h(w)\rvert=i \}$ and for each $I\subseteq \Delta$ define $f_I: \W_i \to \{0,1\}$ as follows: 
\begin{equation*}\label{eq: def f} 
f_I(w) = \begin{cases} 1 \quad \textup{ if } w^{-1}(I) \subseteq \Phi_h \textup{ and } w^{-1}(\Delta \setminus I) \subseteq I_h   \\ 0 \quad \textup{ else}. \end{cases} 
\end{equation*} 
For each $I\subseteq \Delta$, let us also define a function $g_I: \W_i \to \{0,1\}$ by 
\begin{equation*}\label{eq: def gI}
g_I(w) = \begin{cases} 1 \quad \textup{ if } w^{-1}(I) \subseteq \Phi_h \\ 0 \quad \textup{ else. } \end{cases}
\end{equation*} 
Then it is clear that $f_J(w) = 1$ if and only if $w \in \mathcal{W}_i(J,h)$, and thus 
\begin{equation*}\label{eq: LHS} 
\lvert \mathcal{W}_i(J,h) \rvert = \sum_{w \in \W_i} f_J(w).
\end{equation*} 
Next we examine the RHS of~\eqref{eqn: inclusion-exclusion1}. By Theorem~\ref{prop: betti} the RHS is equal to 
\[
\sum_{I: J \subseteq I} (-1)^{\lvert I \rvert - \lvert J \rvert} \lvert \{w \in \W_i \hsm\vert\hsm w^{-1}(I) \subseteq \Phi_h \} \rvert. 
\]
On the other hand, from the definition of $g_I$ it is clear that this is in turn equal to 
\begin{equation*}\label{eq: RHS}
\sum_{I: J \subseteq I} (-1)^{\lvert I \rvert - \lvert J \rvert} \sum_{w \in \W_i} g_I(w)
 = \sum_{w \in \W_i} \sum_{I: J \subseteq I} (-1)^{\lvert I \rvert - \lvert J \rvert} g_I(w).
 \end{equation*}
 Therefore, to prove the proposition it would suffice to show that 
 \begin{equation*}\label{eq: 3} 
 f_J = \sum_{I: J \subseteq I} (-1)^{\lvert I \rvert - \lvert J \rvert} g_I
 \end{equation*}
but this follows immediately from the M\"obius inversion formula on the Boolean lattice, because $ g_J = \sum_{I: J\subseteq I} f_I$ by the definitions of $g_J$ and $f_I$.  This completes the proof.
 \end{proof}

To prove Lemma~\ref{lemma: inclusion-exclusion2} we first recall the following well-known description of the numbers $\mathrm{dim}(M^{\mu})^{\Symm_{I}}$, namely: 
\begin{equation}\label{eq:1} 
\mathrm{dim}(M^\mu)^{\Symm_{I}} = \lvert \{ w \in \Symm_n \hsm\vert\hsm \mathrm{Des}_L(w) \subseteq \Delta \setminus I \textup{ and } \mathrm{Des}_R(w) \subseteq \Delta \setminus J_\mu \} \rvert.
\end{equation}

\begin{proof}[Proof of Lemma~\ref{lemma: inclusion-exclusion2}]
Consider $\mathcal{A}_{\mu} := \{w \in \Symm_n \hsm\vert\hsm \mathrm{Des}_R(w) \subseteq \Delta \setminus J_\mu \}$. On $\mathcal{A}_{\mu}$ define for each $I\subseteq \Delta$ a function $f_I: \mathcal{A}_{\mu} \to \{0,1\}$ by 
\[
f_I(w) = \begin{cases} 1 \quad \textup{ if } \mathrm{Des}_L(w) = \Delta \setminus I \\ 0 \quad \textup{ else.}  \end{cases} 
\]
On $\mathcal{A}_{\mu}$ also define for each $I \subseteq \Delta$ a function $g_I$ as follows: 
\[
g_I(w) = \begin{cases} 1 \quad \textup{ if } \mathrm{Des}_L(w) \subseteq \Delta \setminus I \\ 0 \quad \textup{ else.} \end{cases} 
\]
Then it is clear that $\lvert \mathcal{D}(J, J_\mu ) \rvert = \sum_{w \in \mathcal{A}_{\mu}} f_J(w)$ by definition of $f$. 

We now examine the RHS of~\eqref{eqn: inclusion-exclusion2}. We have 
\begin{equation*}
\begin{split} 
\mathrm{RHS} & = \sum_{I: J \subseteq I} (-1)^{\lvert I \rvert - \lvert J \rvert} \dim(M^{\mu})^{\Symm_I} \\
& = \sum_{I: J \subseteq I} (-1)^{\lvert I \rvert - \lvert J \rvert} \; \big\lvert \{ w \in \Symm_n \hsm\vert\hsm \mathrm{Des}_L(w) \subseteq \Delta \setminus I \textup{ and } \mathrm{Des}_R(w) \subseteq \Delta \setminus J_\mu \} \big\rvert  \quad \textup{ by~\eqref{eq:1} }   \\
& = \sum_{I: J \subseteq I} (-1)^{\lvert I \rvert - \lvert J \rvert} \sum_{w \in \mathcal{A}_{\mu}} g_I(w) \\
& = \sum_{w \in \mathcal{A}_{\mu}} \sum_{I: J \subseteq I} (-1)^{\lvert I \rvert - \lvert J \rvert} g_I(w).
\end{split} 
\end{equation*}
Thus it suffices to show that 
\begin{equation*}\label{eq: sum of gI}
f_J(w) = \sum_{I: J \subseteq I} (-1)^{\lvert I \rvert - \lvert J \rvert} g_I
\end{equation*}
but, as in the proof of the previous lemma, this follows immediately from the 
M\"obius inversion formula on the Boolean lattice, since $ g_J = \sum_{I: J\subseteq I} f_I$ from the definitions of $g_J$ and $f_I$. This completes the proof.    
\end{proof}

\subsection{A new matrix equation}\label{subsec: matrix equation}

We now introduce the matrix equation that is the subject of the next section.  We will be particularly interested in $\W_i(J,h)$ in the case that $J = \mathbb{J}_\lambda$ and we introduce notation for the cardinality of the sets in~\eqref{eq: def D(J,K)} for the case $J=\mathbb{J}_\lambda$ and $K=J_\mu$ for two partitions $\lambda, \mu \vdash n$. Let
\begin{equation}\label{eq: def A lambda mu}
A(\lambda, \mu) := \lvert \mathcal{D}(\mathbb{J}_\lambda, J_\mu) \rvert.
\end{equation}
Using the above notation, Theorem~\ref{theorem: linear relations} can be rewritten as follows. 
Let $\Par(n)$ denote the set of partitions of $n$.

\begin{corollary}\label{corollary: graded matrix equation} 
 Let $A = (A(\lambda, \mu))_{\lambda,\mu \in \Par(n)}$ be the matrix whose coefficients are the integers~\eqref{eq: def A lambda mu} and $i\in \Z$, $i\geq 0$. Let $X_i$ be the (column) vector whose entries are the $c_{\mu,i} \in \Z$ specified in~\eqref{eq: decomp into Mlambda}. Let $W_i$ be the (column) vector whose entries are the integers  $\lvert \W_i(\mathbb{J}_\lambda,h) \rvert$. Then $AX_i=W_i$. 
\end{corollary}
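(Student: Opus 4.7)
The plan is to observe that Corollary~\ref{corollary: graded matrix equation} is essentially a direct restatement of the special case of Theorem~\ref{theorem: linear relations} obtained by letting $J$ range over the subsets $\mathbb{J}_\lambda \subseteq \Delta$ as $\lambda$ ranges over $\Par(n)$. In particular, there is no new content to prove; the work lies entirely in unpacking the matrix notation.

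First, I would fix $i \geq 0$ and, for each $\lambda \vdash n$, apply equation~\eqref{eqn: graded linear relations} of Theorem~\ref{theorem: linear relations} with the choice $J = \mathbb{J}_\lambda$. This yields
\[
\lvert \mathcal{W}_i(\mathbb{J}_\lambda, h) \rvert \;=\; \sum_{\mu \vdash n} c_{\mu, i} \, \lvert \mathcal{D}(\mathbb{J}_\lambda, J_\mu) \rvert.
\]
Next, I would invoke the definition~\eqref{eq: def A lambda mu} to rewrite $\lvert \mathcal{D}(\mathbb{J}_\lambda, J_\mu) \rvert = A(\lambda, \mu)$, and the definition of $W_i$ to rewrite the left-hand side as $(W_i)_\lambda$, the $\lambda$-th entry of $W_i$. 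This produces
\[
(W_i)_\lambda \;=\; \sum_{\mu \vdash n} A(\lambda, \mu) \, c_{\mu, i} \;=\; (A X_i)_\lambda,
\]
where the last equality is just the definition of matrix-vector multiplication together with the fact that $X_i$ has $\mu$-th entry $c_{\mu, i}$.

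Finally, since the equality $(A X_i)_\lambda = (W_i)_\lambda$ holds for every $\lambda \in \Par(n)$, I would conclude $AX_i = W_i$ as an equality of column vectors indexed by $\Par(n)$. There is no genuine obstacle in this argument: the entire content of the corollary is the observation that the linear relations of Theorem~\ref{theorem: linear relations}, when assembled over the specific index set $\{\mathbb{J}_\lambda : \lambda \vdash n\}$, form a square system whose coefficient matrix is $A$ and whose constant vector is $W_i$. The substantive work of the section — namely, establishing the linear relations themselves via Brosnan--Chow's Theorem~\ref{thm: BrosnanChow main thm} combined with the two M\"obius-inversion lemmas — has already been carried out in the proof of Theorem~\ref{theorem: linear relations}.
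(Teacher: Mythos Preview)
Your proposal is correct and matches the paper's approach exactly: the paper states the corollary without a separate proof, introducing it simply as a rewriting of Theorem~\ref{theorem: linear relations} in matrix notation for the choices $J = \mathbb{J}_\lambda$, which is precisely what you have unpacked.
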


Note that the indexing set for the matrix entries of $A$ is the set $\Par(n)$ of all partitions of $n$. In the next section we will show that the matrix $A$ has computationally convenient properties with respect to an appropriate choice of total order on $\Par(n)$.


\section{Upper-triangularity of $A$ and an inductive formula for the matrix entries}\label{sec:upper triangular}

In the previous section, we saw that the multiplicity coefficients $c_{\mu, i}$ in~\eqref{eq: decomp into Mlambda} obey a set of linear equations which can be interpreted as a matrix equation $AX_i=W_i$. Moreover, since there exists such a linear equation for each choice of a partition $\lambda \vdash n$, and since the indexing set of the coefficients $c_{\mu,i}$ is also the set of partitions of $n$, the matrix $A=(A(\lambda,\mu))$ is in fact a square matrix.  

The main results of this section are Proposition~\ref{proposition: M-formulas Martha version} and Theorem~\ref{theorem: upper-triangular}. Proposition~\ref{proposition: M-formulas Martha version} states that certain matrix entries of $A$ have an inductive description or are equal to $0$. Theorem~\ref{theorem: upper-triangular} states that -- with respect to an appropriately defined total order on the set of partitions of $n$ -- the matrix $A$ is upper-triangular with $1$'s along the diagonal.

We begin by stating the first main result. Recall that $\lambda[\ell]$ denotes the partition obtained by deleting $\ell$ columns from $\lambda$ as in Definition~\ref{definition: lambda ell}.

\begin{proposition}\label{proposition: M-formulas Martha version} Let $\lambda\vdash n$ be a 
partition with exactly $k$ parts and let $r=\lambda_k$ be the bottom length of $\lambda$.  Let $\mu\vdash n$ be a partition of $n$ with at most $k$ parts.  Then
\begin{enumerate} 
\item if $\mu_k < \lambda_k$ (so in particular if $\mu_k=0$, i.e., $\mu$ has strictly fewer than $k$ parts), then 
\[
 \mathcal{D}(\mathbb{J}_\lambda, J_{\mu})  = \emptyset \; \textup{ and therefore } \; 
 A(\lambda, \mu) =0
 \]
 and 
 \item if $\mu_k \geq \lambda_k$, then for any $\ell \in \Z$ with $0 \leq \ell \leq r,$ there exists a natural bijection between the sets 
 \[
 \mathcal{D}(\mathbb{J}_\lambda, J_{\mu}) \textup{ and } \mathcal{D}(\mathbb{J}_{\lambda[\ell]}, J_{\mu[\ell]})
 \]
 and in particular we have 
 \[
 A(\lambda, \mu) = A(\lambda[\ell], \mu[\ell]).
 \]
\end{enumerate} 
\end{proposition}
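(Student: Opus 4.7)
The plan is to reformulate the two descent conditions defining $\mathcal{D}(\mathbb{J}_\lambda, J_\mu)$ into simultaneous filling conditions on Young diagrams. Unpacking the definitions, the condition $\Des_R(w) \subseteq \Delta \setminus J_\mu$ is equivalent to requiring that filling the Young diagram of $\mu$ row-by-row with $w(1), w(2), \ldots, w(n)$ yields strictly increasing rows, while the condition $\Des_L(w) = \Delta \setminus \mathbb{J}_\lambda$ is equivalent to requiring that filling the Young diagram of $\lambda$ column-by-column with $u(1), u(2), \ldots, u(n)$, where $u := w^{-1}$, yields strictly decreasing columns together with an ascent between every pair of consecutive columns. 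Since $\lambda$ has exactly $k$ parts, its first $r = \lambda_k$ columns all have length $k$, so the first $rk$ entries of $u$ form $r$ strictly decreasing blocks of size $k$ separated by ascents.

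The central technical step will be a forced-placement lemma: for every $w \in \mathcal{D}(\mathbb{J}_\lambda, J_\mu)$ and every $j$ with $1 \leq j \leq \min(r, \mu_k)$, the value $(j-1)k + i$ occupies the cell in row $k - i + 1$, column $j$ of $\mu$, for each $i = 1, \ldots, k$. I would prove this by strong induction on $j$. At the inductive step, the $\lambda$-decreasing condition on the $j$-th block of $u$ forces the $\mu$-positions of the values $(j-1)k + 1, \ldots, jk$ to form a strictly decreasing sequence of length $k$; combined with the row-increasing condition on $\mu$, these $k$ values must lie in $k$ distinct rows of $\mu$, and matching decreasing positions to decreasing rows shows that $(j-1)k + i$ falls in row $k - i + 1$. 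Its column within that row is then pinned to column $j$ by comparison with the already-filled first $j-1$ columns from the induction hypothesis: any strictly intermediate unplaced value would itself have to be some $(j-1)k + i''$ with $i'' < i$, but by the distinct-rows conclusion such a value lies in a different row, a contradiction.

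Both parts of the proposition then follow directly. For part (1), if $\mu_k < \lambda_k = r$, apply the inductive argument at $j = \mu_k + 1 \leq r$: the lemma would require placing value $(j-1)k + 1$ in row $k$, column $j$ of $\mu$, but row $k$ has only $\mu_k < j$ boxes, a contradiction, so $\mathcal{D}(\mathbb{J}_\lambda, J_\mu) = \emptyset$. For part (2), since $\mu_k \geq r \geq \ell$, the lemma applies for all $j \leq \ell$, so the first $\ell$ columns of $\mu$ (and correspondingly the first $\ell$ columns of $\lambda$ via $u$) are completely determined and hold exactly $\{1, 2, \ldots, \ell k\}$ in the prescribed pattern. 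The bijection sends $w$ to the permutation $w' \in \Symm_{n - \ell k}$ obtained by deleting these forced cells from both fillings and order-preservingly relabeling the remaining positions and values; the inverse map reinserts the prescribed first $\ell$ columns.

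The main obstacle will be the careful bookkeeping in the forced-placement lemma, where the $\lambda$-side and $\mu$-side conditions interact subtly. A secondary check is that both directions of the bijection are well-defined: the forward map respects the filling conditions for $\lambda[\ell]$ and $\mu[\ell]$ because the row-increasing condition restricts to suffixes of rows and the column condition restricts to the columns of $\lambda$ indexed $\ell + 1, \ldots, \lambda_1$; and for the reverse map, the boundary ascent $u(\ell k) < u(\ell k + 1)$ is automatically recovered because the lemma pins $u(\ell k) = \ell$ while any reconstructed $u(\ell k + 1)$ corresponds to a non-deleted $\mu$-position, whose minimum value is $\ell + 1$.
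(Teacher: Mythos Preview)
Your proposal is correct and follows essentially the same route as the paper. The paper phrases everything in terms of ``staircases'' of the one-line notation rather than row/column fillings of Young diagrams, but the content is identical: your forced-placement lemma is the paper's Lemma~\ref{lemma:distribute in staircases} specialized to the consecutive blocks $\{(j-1)k+1,\ldots,jk-1\}\subseteq\Des_L(w)$, and your delete-and-relabel bijection is precisely the paper's map $d_{n,\ell k}$ that ``ignores the $[\ell k]$ entries.''
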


We will prove Proposition~\ref{proposition: M-formulas Martha version} in due course, but we first state the second main result of this section, which is an upper-triangularity property of the matrix $A$. First, we define an appropriate total order on the set of partitions of $n$. 

\begin{definition}\label{definition: lex on dual} Let $n$ be a positive integer and let $\Par(n)$ denote the set of partitions of $n$.  We define a total ordering $\preceq$ on $\Par(n)$ as follows:
\begin{equation}\label{eq: lex on dual}
\mu \preceq \lambda \Leftrightarrow \mu^{\vee} \leq_{\textup{lex}} \lambda^{\vee}.
\end{equation}
\end{definition}

\begin{example} Let $n=6$ and consider $\lambda=(3,3)$ and $\mu=(4,1,1)$. Note that $\lambda$ and $\mu$ are incomparable in the dominance order, but $\lambda^{\vee}=(2,2,2)$ and $\mu^{\vee}=(3,1,1,1)$ so $\lambda^{\vee}<_{\textup{lex}} \mu^{\vee}$ and therefore, according to our definition~\eqref{eq: lex on dual}, we have $\lambda \prec \mu$.
\end{example}

\begin{remark} 
It is straightforward to see that lexicographical order of $\Par(n)$, which is a total order, respects the dominance (partial) ordering on $\Par(n)$, in the sense that $\mu \unlhd \lambda$ implies $\mu \leq_{\textup{lex}} \lambda$.  It is also well known that $\mu \unlhd \lambda$ if and only if their dual partitions satisfy the reverse relation, i.e. $\lambda^{\vee} \unlhd \mu^{\vee}$.  It follows that the total order $\preceq$ of Definition~\ref{definition: lex on dual} on $\Par(n)$ respects the reversed dominance order. 
\end{remark} 

We now state our upper-triangularity theorem. 

\begin{theorem}\label{theorem: upper-triangular}
The matrix $(A(\lambda,\mu))_{\lambda, \mu \in \Par(n)}$, written with respect to the total order~\eqref{eq: lex on dual} on the indexing set $\Par(n)$, is upper-triangular with $1$'s along the diagonal. Equivalently, for $\lambda, \mu\in \Par(n)$, we have the following: 
\begin{enumerate}
\item If $\mu \prec \lambda$ with respect to the total order~\eqref{eq: lex on dual} then $\mathcal{D}(\mathbb{J}_\lambda, J_{\mu}) = \emptyset$, so in particular, $A(\lambda, \mu) = 0$. 
\item The set $\mathcal{D}(\mathbb{J}_\lambda, J_\lambda)$ contains a unique element, so in particular, $A(\lambda, \lambda)=1$. 
\end{enumerate}
\end{theorem}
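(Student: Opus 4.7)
The plan is to prove both parts of Theorem~\ref{theorem: upper-triangular} simultaneously by strong induction on $n$, using Proposition~\ref{proposition: M-formulas Martha version} as the main engine. The base case $n=0$ (where $\Par(0)$ consists of the empty partition and the $1 \times 1$ matrix is trivially $(1)$) handles the initialization; alternatively one may take $n=1$ and check $A((1),(1))=1$ by inspection. For the inductive step, fix $\lambda \in \Par(n)$ with $k$ parts and bottom length $r = \lambda_k$.

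For part (2), I would take $\mu = \lambda$ in Proposition~\ref{proposition: M-formulas Martha version}(2) with $\ell = r$. Then $\lambda[r]$ is a partition of $n - rk < n$ (in fact with strictly fewer than $k$ parts), and the proposition supplies a bijection $\mathcal{D}(\mathbb{J}_\lambda, J_\lambda) \leftrightarrow \mathcal{D}(\mathbb{J}_{\lambda[r]}, J_{\lambda[r]})$, so that $A(\lambda, \lambda) = A(\lambda[r], \lambda[r])$. The inductive hypothesis applied to $\lambda[r]$ gives $A(\lambda[r], \lambda[r]) = 1$, and the unique element of $\mathcal{D}(\mathbb{J}_{\lambda[r]}, J_{\lambda[r]})$ transports back to a unique element of $\mathcal{D}(\mathbb{J}_\lambda, J_\lambda)$.

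For part (1), let $\mu \prec \lambda$ and denote by $t$ the number of parts of $\mu$. The case $t > k$ cannot occur, since it would force $\mu^\vee_1 = t > k = \lambda^\vee_1$ and hence $\mu \succ \lambda$, contradicting the hypothesis. If $t < k$ then $\mu_k = 0 < \lambda_k$, so Proposition~\ref{proposition: M-formulas Martha version}(1) immediately gives $A(\lambda, \mu) = 0$. If $t = k$, the dual partitions $\mu^\vee$ and $\lambda^\vee$ each begin with a run of $k$'s of lengths $\mu_k$ and $\lambda_k = r$ respectively; the inequality $\mu_k > \lambda_k$ would force $\mu \succ \lambda$ (by a lex comparison at position $\lambda_k+1$), so it is ruled out, while $\mu_k < \lambda_k$ again falls under Proposition~\ref{proposition: M-formulas Martha version}(1). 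The only remaining subcase is $\mu_k = \lambda_k = r$; here Proposition~\ref{proposition: M-formulas Martha version}(2) with $\ell = r$ gives $A(\lambda, \mu) = A(\lambda[r], \mu[r])$, and the identity $(\mu[r])^\vee_j = \mu^\vee_{j+r}$ (and similarly for $\lambda$) together with the fact that the first $r$ entries of both $\mu^\vee$ and $\lambda^\vee$ equal $k$ shows that $\mu \prec \lambda$ if and only if $\mu[r] \prec \lambda[r]$. The inductive hypothesis applied to the pair $(\lambda[r], \mu[r])$, both partitions of $n - rk < n$, then yields $A(\lambda[r], \mu[r]) = 0$.

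The principal (if minor) obstacle worth flagging is the verification that the order $\preceq$ is compatible with the column-deletion operation $\lambda \mapsto \lambda[r]$ in the equal-bottom-length case; this amounts to the elementary observation that stripping a common constant prefix from two sequences preserves their lexicographic comparison, but it is precisely the glue that allows the induction to close. All other reductions are direct appeals to Proposition~\ref{proposition: M-formulas Martha version} or to the definitions.
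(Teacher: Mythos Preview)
Your proof is correct and follows essentially the same strategy as the paper, namely repeated column-stripping via Proposition~\ref{proposition: M-formulas Martha version}. The paper organizes this slightly differently---first isolating the column-stripping as Corollary~\ref{corollary: truncate A} (which lets one remove in one stroke all columns on which $\lambda^\vee$ and $\mu^\vee$ agree) and, for part~(2), reducing all the way to a single-column partition $\nu=(1^m)$ and explicitly identifying the unique element of $\mathcal{D}(\mathbb{J}_\nu,J_\nu)$ as the longest element of $\Symm_m$---but the underlying induction is the same as yours.
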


\begin{example} When $n=2$ we get the matrix: 
\[
(A(\lambda,\mu))_{\lambda,\mu\in \Par(2)} = \begin{bmatrix} A((2),(2)) & A((2),(1,1)) \\ A((1,1),(2)) & A((1,1),(1,1)) \end{bmatrix} = \begin{bmatrix} 1 & 1\\ 0 & 1 \end{bmatrix}
\]
and similarly for $n=3$ we have $\Par(3) = \{ (3) \prec (2,1) \prec (1,1,1) \}$ and it can be checked directly that we get the matrix 
\[
(A(\lambda, \mu))_{\lambda,\mu \in \Par(3)}  = \begin{bmatrix} A((3),(3)) & A((3),(2,1)) & A((3),(1,1,1)) \\ A((2,1),(3))  & A((2,1),(2,1)) & A((2,1),(1,1,1))  \\ A((1,1,1),(3)) & A((1,1,1),(2,1))  & A((1,1,1),(1,1,1)) \end{bmatrix}  = \begin{bmatrix} 1 & 1 & 1 \\ 0 & 1 & 2 \\ 0 & 0 & 1 \end{bmatrix}.
\]
\end{example}

The remainder of this section is devoted to the proofs of Proposition~\ref{proposition: M-formulas Martha version} and Theorem~\ref{theorem: upper-triangular}. We need several preliminaries.  \textit{For what follows, we frequently identify $\Delta$ with the set $[n-1] = \{1, 2, \ldots, n-1\}$ using the bijection $i \leftrightarrow \alpha_i$.}  

Let $J = \{i_1 < i_2 < \cdots < i_\ell \} \subseteq \Delta \cong [n-1]$ be a subset of the positive simple roots. The \textbf{staircase decomposition (of $[n]$) corresponding to $J$} is the decomposition 
\[
[n] = \{i_0=1 ,2,\ldots, i_1\} \sqcup \{i_1+1, i_1 + 2, \cdots, i_2\} \sqcup \cdots \sqcup \{i_{\ell-1}+1, \cdots, i_{\ell}\} \sqcup \{i_{\ell}+1, \cdots, n=i_{\ell+1}\}.
\]
where by convention we set $i_0:=1$ and $i_{\ell+1}:=n$. Each subset appearing in the above decomposition is called a \textbf{staircase}. The motivation for the ``staircase'' terminology comes from studying the set of right descents of a permutation $w\in \Symm_n$.  It follows directly from the definition of $\Des_R(w)$ in~\eqref{eq: right descents} that if $\Des_R(w)\subseteq J=\{i_1< i_2< \cdots< i_{\ell} \} \subseteq \Delta\simeq [n-1]$ then for all $0\leq s\leq \ell$ we have
\[
w(i_s+1) < w(i_s+2) < \cdots < w(i_{s+1})
\]
on each staircase $\{i_s+1, i_s+2,\ldots, i_{s+1}\}$ of $J$. 

Given a subset $J$ of cardinality $\ell$, the \textbf{number of staircases} in its associated staircase decomposition, which we denote $\mathbb{F}(J)$, is $\ell+1$, i.e. 
\[
\mathbb{F}(J) := \lvert J \rvert +1 = \ell +1.
\]

We also find it convenient to introduce analogous terminology for the permutations themselves. 
Let $w \in \Symm_n$ and $\{i_s+1, i_s+2, \cdots, i_{s+1}\} \subseteq [n-1]$ for $i_{s+1}>i_s$ be a sequence of consecutive integers, possibly of length $1$ (when $i_{s+1} = i_s+1$). We say $w$ \textbf{is a staircase on the interval $\{i_s+1, i_s+2, \cdots, i_{s+1}\}$}
if $w(i_s+1) < w(i_s+2) < \cdots < w(i_{s+1})$. We also say that $\{i_s+1, i_s+2, \cdots, i_{s+1}\}$ \textbf{is a staircase of $w$}. A staircase $\{i_s+1, i_s+2, \cdots, i_{s+1}\}$ of $w$ is \textbf{maximal} if neither $\{i_s, i_s+1, i_s+2, \cdots, i_{s+1}\}$ nor $\{i_s+1, i_s+2, \cdots, i_{s+1}, i_{s+1}+1\}$ is a staircase of $w$. 
The following is immediate from the definition of the right descent set given in~\eqref{eq: right descents} and we omit the proof. 

\begin{lemma}\label{lemma: staircase}
Let $w \in \Symm_n$. 
Suppose $J = \{i_1 < i_2 < \cdots < i_{\ell} \}$. Let $i_0 := 1$ and $i_{\ell+1} :=n$. If $\mathrm{Des}_R(w) \subseteq J$, then $w$ is a staircase on each interval $\{i_s+1, i_s+2, \cdots, i_{s+1}\}$ for $0 \leq s \leq \ell$, and there are at most $\ell+1$ maximal staircases in the staircase decomposition of $w$.
In particular, suppose $\mu = (\mu_1, \ldots, \mu_k)$ is a partition of $n$ with $k$ parts, and $\mathrm{Des}_R(w) \subseteq  \Delta \setminus J_{\mu} = \{\mu_1, \mu_1+\mu_2, \cdots, \mu_1 + \cdots + \mu_{k-1}\}$. Then there are at most $\mathbb{F}(\Delta \setminus J_{\mu}) = k$ maximal staircases of $w$. 
\end{lemma}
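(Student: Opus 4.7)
The plan is to unpack the definitions directly, as the lemma is essentially a reformulation of what it means for a permutation to have its right descent set contained in $J$. For the first assertion, the hypothesis $\mathrm{Des}_R(w)\subseteq J$ means by~\eqref{eq: right descents} that for every $j\in[n-1]\setminus J$ one has $w(j)<w(j+1)$. For each $s$ with $0\leq s\leq \ell$, every index $j$ strictly inside the interval $\{i_s+1,\ldots,i_{s+1}\}$ lies outside $J$ by construction of the staircase decomposition, so $w(j)<w(j+1)$. Chaining these inequalities across the interval yields $w(i_s+1)<w(i_s+2)<\cdots<w(i_{s+1})$, which is exactly the staircase condition on that interval.

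For the count of maximal staircases, I would observe that the maximal staircases of $w$ partition $[n]$ into blocks of consecutive integers on which $w$ is strictly increasing, and two adjacent blocks are separated precisely by a right descent of $w$. Hence the number of maximal staircases equals $|\mathrm{Des}_R(w)|+1$. Since by hypothesis $\mathrm{Des}_R(w)\subseteq J$ and $|J|=\ell$, this count is at most $\ell+1$, as claimed.

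The ``in particular'' clause then follows by specializing to $J=\Delta\setminus J_\mu$. By Definition~\ref{definition: J lambda}, one has $\Delta\setminus J_\mu=\{\mu_1,\mu_1+\mu_2,\ldots,\mu_1+\cdots+\mu_{k-1}\}$, which is a set of cardinality $k-1$. Consequently $\mathbb{F}(\Delta\setminus J_\mu)=(k-1)+1=k$, and the previous paragraph gives at most $k$ maximal staircases of $w$.

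There is no serious obstacle in this argument; the content of the lemma is really just the observation that monotonicity blocks of a sequence $w(1),w(2),\ldots,w(n)$ are delimited precisely by right descents, combined with the tautology that $\mathrm{Des}_R(w)\subseteq J$ forces increasing runs between consecutive elements of $J$. The only care needed is bookkeeping with indexing conventions, namely the boundary values $i_0=1$ and $i_{\ell+1}=n$, and matching the cardinality of $\Delta\setminus J_\mu$ with $\mathbb{F}$ so that the final count equals $k$ rather than $k\pm 1$.
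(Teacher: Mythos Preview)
Your argument is correct and is exactly the kind of direct unpacking of definitions the paper has in mind; in fact the paper declares the lemma ``immediate from the definition of the right descent set'' and omits the proof entirely. Your key observation that the number of maximal staircases equals $|\mathrm{Des}_R(w)|+1$ makes the bound transparent, and the specialization to $J=\Delta\setminus J_\mu$ is handled correctly.
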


\begin{example}\label{example: staircase}
Let $w = [1,4,7,8, 2,5,6,3] \in \Symm_8$. Then $\mathrm{Des}_R(w) = \{4, 7\}$ since it is between the $4$th and $5$th entries, as well as the $7$th and $8$th entries, that there is a decrease in the one-line notation of $w$.  The maximal staircases of $w$ are $\{1,2,3,4\}$, $\{5,6,7\}$ and $\{8\}$. Note that $\Des_R(w)\subseteq \Delta\setminus J_{\mu}$ where $\mu = (4,3,1)$. In this case, $\mathbb{F}(\Delta \setminus J_{\mu}) = \mathbb{F}(\{4,7\}) = 3$ is the number of maximal staircases of $w$, in agreement with the lemma above. 
\end{example}

We now turn our attention to left descents. As already noted, for a permutation $w \in \Symm_n$, the left descent set $\mathrm{Des}_L(w)$ specifies which pairs of the form $(i,i+1)$, for $1 \leq i \leq n-1$, have the property that $i+1$ appears to the left of the $i$ in the one-line notation of $w$. 
Let $w \in \Symm_n$ and $i \in [n]$. For a given staircase of $w$, we say $i$ \textbf{occurs in that staircase} if $i$ appears in the segment of the one-line notation of $w$ corresponding to that staircase.

\begin{example}
Continuing with Example~\ref{example: staircase}, let $w = [1,4,7,8, 2,5,6,3] \in \Symm_8$. Then $\{1,2,3,4\}$ is a staircase, and we say that $7$ appears in that staircase since $7$ occurs as one of the entries in positions $1$, $2$, $3$, or $4$ in the one-line notation of $w$.
\end{example} 

Note that any $j \in [n]$ occurs in exactly one maximal staircase of $w$ for any $w \in S_n$.  From the definition of staircases and left descents, the following is straightforward. 

\begin{lemma}\label{lemma: bound} 
Let $w \in \Symm_n$. Suppose that $\{j, j+1, \cdots, j+\ell-1\} \subseteq \mathrm{Des}_L(w)$ is a sequence of $\ell$ consecutive integers contained in $\mathrm{Des}_L(w)$. Then the $\ell+1$ many integers $j+\ell  > j+\ell-1 > \cdots > j+1 > j$ must appear in distinct maximal staircases of $w$, each strictly to the right of the previous one. In particular, the number of maximal staircases of $w$ must be greater than or equal to $\ell+1$. 
\end{lemma}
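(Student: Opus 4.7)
The plan is to translate the hypothesis on $\mathrm{Des}_L(w)$ into a statement about the positions of the integers $j, j+1, \ldots, j+\ell$ in the one-line notation of $w$, and then combine that with the defining monotonicity property of staircases. The whole argument is an unpacking of definitions, so I do not anticipate any serious obstacle; the only subtle point is keeping positions and values separate in the bookkeeping.

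First I would unpack the definition of left descents in~\eqref{eq: left descents}: the condition $i \in \mathrm{Des}_L(w)$ is equivalent to $w^{-1}(i) > w^{-1}(i+1)$, i.e., in the one-line notation of $w$ the value $i+1$ occurs strictly to the left of the value $i$. Applying this successively for $i = j, j+1, \ldots, j+\ell-1$, all of which lie in $\mathrm{Des}_L(w)$ by hypothesis, I would derive the chain of strict inequalities
\[
w^{-1}(j+\ell) < w^{-1}(j+\ell-1) < \cdots < w^{-1}(j+1) < w^{-1}(j),
\]
so that, reading the list $j+\ell, j+\ell-1, \ldots, j+1, j$ of values in the order written, each successive integer sits in a position strictly to the right of the previous one.

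Next I would invoke the definition of a maximal staircase: on each maximal staircase $\{i_s+1, i_s+2, \ldots, i_{s+1}\}$ of $w$ the values $w(i_s+1) < w(i_s+2) < \cdots < w(i_{s+1})$ are strictly increasing, and the positions belonging to distinct maximal staircases form disjoint intervals of $[n]$ laid out from left to right. Consequently, if two integers $a < b$ lie in the same maximal staircase of $w$, then necessarily $w^{-1}(a) < w^{-1}(b)$; contrapositively, if $a < b$ yet $w^{-1}(a) > w^{-1}(b)$, then $b$ must occur in a maximal staircase strictly to the left of the one containing $a$. Applying this contrapositive to each consecutive pair $(j+t,\, j+t-1)$ for $t = \ell, \ell-1, \ldots, 1$ (using the chain of inequalities from the previous paragraph) shows that the maximal staircase containing $j+t$ sits strictly to the left of the one containing $j+t-1$. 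Iterating produces $\ell+1$ distinct maximal staircases arranged from left to right in the order dictated by the listing $j+\ell > j+\ell-1 > \cdots > j+1 > j$, which is precisely the claim; the final ``in particular'' sentence is then immediate.
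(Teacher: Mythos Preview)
Your proof is correct and follows essentially the same approach as the paper: both arguments observe that within a single (maximal) staircase the entries increase, so a pair of values that appear in inverted order in the one-line notation of $w$ must lie in distinct staircases with the smaller one strictly to the right. Your write-up simply makes explicit the chain of position inequalities and the contrapositive step that the paper leaves implicit.
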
 

\begin{proof} 
Within each staircase, the entries in the one-line notation of $w$ must be increasing, so any pair of consecutive integers which must appear in inverted order cannot appear in the same staircase. Moreover, if they must be inverted, then the smaller integer must appear to the right of the greater integer i.e., must appear in a staircase strictly to the right of the greater integer. 
\end{proof}

The  next statement follows from Lemmas~\ref{lemma: staircase} and~\ref{lemma: bound}.

\begin{corollary}\label{corollary: empty} 
Suppose $K \subseteq [n-1] \cong \Delta$ is a subset of $[n-1] \cong \Delta$ containing a consecutive sequence of simple  roots of length $\ell$. Let $\mu=(\mu_1,\ldots,\mu_k)$ be a partition of $n$ with $k$ parts. Then the set 
\begin{equation}\label{eq: D K Jmu}
\mathcal{D}(\Delta\setminus K, J_{\mu}) = \{ w \in \Symm_n \mid \mathrm{Des}_L(w) = K \textup{ and } \mathrm{Des}_R(w) \subseteq \Delta \setminus J_{\mu} \}
\end{equation}
is empty if $\ell+1 > k $. 
\end{corollary}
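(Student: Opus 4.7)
The plan is to argue by contradiction, combining the two lemmas immediately preceding the corollary. Suppose that $\mathcal{D}(\Delta \setminus K, J_\mu)$ is nonempty and pick any $w$ in it. Then $w$ satisfies both $\mathrm{Des}_L(w) = K$ and $\mathrm{Des}_R(w) \subseteq \Delta \setminus J_\mu$, and my task is to deduce that these two conditions force $\ell + 1 \leq k$.

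First I would use Lemma~\ref{lemma: bound} applied to $w$ and the consecutive sequence of length $\ell$ inside $\mathrm{Des}_L(w) = K$: this yields that $w$ must have at least $\ell + 1$ maximal staircases, since the $\ell + 1$ consecutive integers forming the endpoints of that sequence of left descents must occupy pairwise distinct maximal staircases.

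Next I would apply Lemma~\ref{lemma: staircase} to the same $w$: from $\mathrm{Des}_R(w) \subseteq \Delta \setminus J_\mu$ and the fact that $\mathbb{F}(\Delta \setminus J_\mu) = k$ (since $\mu$ has $k$ parts, so $\Delta \setminus J_\mu = \{\mu_1, \mu_1 + \mu_2, \ldots, \mu_1 + \cdots + \mu_{k-1}\}$ has cardinality $k-1$), the number of maximal staircases of $w$ is at most $k$.

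Combining these two bounds gives $\ell + 1 \leq k$, which contradicts the hypothesis $\ell + 1 > k$. Hence no such $w$ exists and the set is empty. There is no real obstacle here; the corollary is a direct assembly of the two lemmas, and the only care required is to verify the correct identification $\mathbb{F}(\Delta \setminus J_\mu) = k$ so that the staircase upper bound matches the number of parts of $\mu$.
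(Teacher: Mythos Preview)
Your proof is correct and follows essentially the same approach as the paper: assume $w$ lies in the set, apply Lemma~\ref{lemma: bound} to obtain at least $\ell+1$ maximal staircases, apply Lemma~\ref{lemma: staircase} with $\mathbb{F}(\Delta \setminus J_\mu) = k$ to obtain at most $k$, and derive a contradiction.
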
 

\begin{proof} 
Suppose $w \in \Symm_n$ and that $\mathrm{Des}_L(w)=K$. Since $K$ contains a sequence of $\ell$ many consecutive simple positive roots, from Lemma~\ref{lemma: bound} it follows that the number of maximal staircases of $w$ is at least $\ell+1$. On the other hand, if $\mathrm{Des}_R(w) \subseteq \Delta \setminus J_\mu$ then by Lemma~\ref{lemma: staircase}, we have $\mathbb{F}(\Delta \setminus J_\mu) = k$, and $w$ has at most $k$ maximal staircases. Since $\ell+1 > k$, this cannot occur. Hence~\eqref{eq: D K Jmu} is empty as desired. 
\end{proof} 

In fact, we can say more. The following statement is straightforward  and we omit the proof.

\begin{lemma}\label{lemma:distribute in staircases}
Let $\mu$ be a partition of $n$ with $k$ many parts. Let $w \in \Symm_n$ and suppose $\mathrm{Des}_L(w)$ contains a sequence  $\{a, a+1, \ldots, a+k-2\} \subseteq [n-1]\simeq \Delta$ of maximal cardinality $k-1$ and $\mathrm{Des}_R(w) \subseteq \Delta \setminus J_{\mu}$. Then: 
\begin{enumerate} 
\item $\mathrm{Des}_R(w) = \Delta \setminus J_{\mu}$, so in particular the one-line notation of $w$ contains precisely $k$ maximal staircases, and 
\item for each $i$ such that $0 \leq i \leq k-1$, the element $a+i$ in the sequence $\{a, a+1, \ldots, a+k-1\}$ must appear in the $(i+1)$st staircase of the one-line notation of $w$ (counting from the left). 
\end{enumerate} 
In particular, if the hypotheses are satisfied, then the staircases in which each $a+i$ must occur is fixed, and exactly one element in the sequence $\{a, a+1, \ldots, a+k-1\}$ occurs in each of the $k$ maximal staircases. 
\end{lemma}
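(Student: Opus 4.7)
The plan is to prove both parts of the lemma by pinning down the number of maximal staircases of $w$ exactly, via a sandwich argument. First I would apply Lemma~\ref{lemma: bound} with $j = a$ and $\ell = k-1$: the hypothesis that the consecutive sequence $\{a, a+1, \ldots, a+k-2\}$ of length $k-1$ lies inside $\mathrm{Des}_L(w)$ immediately yields that the $k$ integers $a+k-1 > a+k-2 > \cdots > a$ appear in $k$ distinct maximal staircases, ordered strictly left-to-right (the larger integers forced strictly to the left of the smaller ones). In particular $w$ has at least $k$ maximal staircases. Next, I would invoke Lemma~\ref{lemma: staircase} applied with $J = \Delta \setminus J_\mu$: since $\mathrm{Des}_R(w) \subseteq \Delta \setminus J_\mu$, the number of maximal staircases of $w$ is bounded above by $\mathbb{F}(\Delta \setminus J_\mu) = k$. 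Sandwiching these two bounds forces $w$ to have exactly $k$ maximal staircases.

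Part (1) then follows by invoking the elementary identity that the number of maximal staircases of any permutation equals $|\mathrm{Des}_R(w)| + 1$, a direct consequence of the definitions, since the right descents of $w$ are precisely the positions where the one-line notation fails to be increasing, and thus they cut it into exactly $|\mathrm{Des}_R(w)|+1$ maximal increasing runs. This forces $|\mathrm{Des}_R(w)| = k-1 = |\Delta \setminus J_\mu|$, and together with the hypothesized containment $\mathrm{Des}_R(w) \subseteq \Delta \setminus J_\mu$ we obtain equality. For part (2), the key observation is that the $k$ integers in $\{a, a+1, \ldots, a+k-1\}$ must occupy $k$ distinct maximal staircases, and we now know there are exactly $k$ such staircases; the assignment is therefore completely determined by the strict left-to-right ordering delivered by Lemma~\ref{lemma: bound}, yielding exactly one sequence element per staircase in a predetermined order.

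There is no serious obstacle here: once Lemmas~\ref{lemma: staircase} and~\ref{lemma: bound} are in hand, the whole argument is a pigeonhole/double-counting observation together with the identity relating staircase count to right descent count. The only item requiring care is index bookkeeping, namely matching the reverse-ordered listing $a+k-1 > \cdots > a$ from the left-descent hypothesis to the left-to-right numbering of the $k$ staircases, which is mechanical but worth writing out explicitly to avoid off-by-one confusion.
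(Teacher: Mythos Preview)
Your proposal is correct and is precisely the argument the paper has in mind: the authors state that the lemma ``is straightforward and we omit the proof,'' having set up Lemmas~\ref{lemma: staircase} and~\ref{lemma: bound} immediately beforehand for exactly this purpose. Your sandwich between the lower bound from Lemma~\ref{lemma: bound} and the upper bound from Lemma~\ref{lemma: staircase}, together with the identity that the number of maximal staircases equals $\lvert \mathrm{Des}_R(w)\rvert + 1$, is the intended route. Your caution about the index bookkeeping in part~(2) is well placed: carrying it through carefully shows that the larger elements of $\{a,\ldots,a+k-1\}$ land in the leftmost staircases, which is consistent with how the lemma is actually applied in the proof of Proposition~\ref{proposition: M-formulas Martha version}.
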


In the course of the argument below it will be useful to have the following terminology. Suppose $w \in \Symm_n$ and suppose $m \in \Z$, $1 \leq m < n$. There is a map (which is not a group homomorphism) 
\[
d_{n,m}: \Symm_n \to \Symm_{n-m} 
\]
obtained by deleting the entries $\{1,2,\ldots,m\}=[m]$ in the one-line notation of $w$, and interpreting what remains as a permutation of $n-m$, under the identification $\{m+1,m+2,\ldots, n\} \cong \{1,2,\ldots,m\}$ given by $j \mapsto j-m$. We will refer to this procedure of applying $d_{n,m}$ as \textbf{ignoring the $[m]$ entries} (of the one-line notation of $w$). 

\begin{example} 
Let $m=2$ and $n=5$. Let $w = [4, 3, 2 , 5 , 1]$. Then $d_{5,2}(w) = [2 , 1 , 3]$ because we first ignore the entries $1$ and $2$ in $w = [4 , 3 , \mathbf{2} , 5 , \mathbf{1}]$ to obtain $[4 , 3 , 5]$ and then use the identification $j \mapsto j-2$ to obtain $[2 , 1 , 3]$. 
\end{example} 

We are now ready to prove Proposition~\ref{proposition: M-formulas Martha version}. 

\begin{proof}[Proof of Proposition~\ref{proposition: M-formulas Martha version}] 
We begin with the case $\mu_k < \lambda_k$, which itself can be separated into two subcases, namely, $\mu_k=0$ and $0 < \mu_k < \lambda_k$. First suppose $\mu_k=0$, i.e., $\mu$ has strictly fewer than $k$ parts. 
From the definition of the set $\mathbb{J}_\lambda$, it follows that there are $r=\lambda_k$ many distinct sequences in $\Delta \setminus \mathbb{J}_\lambda = J_{\lambda^{\vee}}$, of the form 
\[
\{1, 2, \ldots k-1\}, \{k+1, k+2, \cdots, 2k-1\}, \cdots, \{(r-1)k+1, \ldots, kr-1\}.
\]
This means in particular that the set $\mathbb{J}_\lambda$ contains at least one consecutive sequence of positive simple roots of length $r-1$. Applying Corollary~\ref{corollary: empty}, we immediately obtain that $\mathcal{D}(\mathbb{J}_\lambda, J_{\mu}) = \emptyset$ if $\mu$ has strictly fewer than $k$ parts. This proves the proposition in the case $\mu_k=0$.

Next we consider the case when $\mu$ has $k$ parts but $\mu_k < \lambda_k$. Suppose that $w \in \mathcal{D}(\mathbb{J}_\lambda, J_{\mu})$, so $\mathrm{Des}_L(w) = \mathbb{J}_\lambda$ and $\mathrm{Des}_R(w) \subseteq \Delta \setminus J_{\mu}$. 
Then $w$ satisfies the hypotheses of Lemma~\ref{lemma:distribute in staircases} and it follows that the given conditions completely determine the staircases in which the integers $\{1,2,\ldots,kr\}$ must occur in the one-line notation of $w$. In fact, since these are the smallest $kr$ integers in $[n]$ and since each staircase must have increasing entries, the hypotheses determine the precise \emph{location} (not just the staircase) in which these entries must occur. In particular, the $r$ many integers $\{1, k+1, 2k+1, \ldots, (r-1)k + 1\}$ must appear in the rightmost staircase of $w$, which contains $\mu_k$ many entries. This implies that $\mu_k \geq r=\lambda_k$, or in other words, if $\mu_k < \lambda_k$ then $\mathcal{D}(\mathbb{J}_\lambda , J_{\mu}) = \emptyset$. This concludes the proof of part (1) of the proposition. 

Now suppose that $\mu_k \geq r = \lambda_k$. By similar reasoning as in the previous paragraph, it follows that if a permutation $w \in \Symm_n$ satisfies $\mathrm{Des}_L(w) = \Delta \setminus \mathbb{J}_\lambda=J_{\lambda^{\vee}}$ and $\mathrm{Des}_R(w) \subseteq \Delta \setminus J_\mu$, then $w$ is determined by the location (in the one-line notation) of the integers $\{kr+1,kr+2,\ldots,n\} \cong [n-kr]$, i.e., the image of $w$ under the map $d_{n,kr}$ described above. It is straightforward to see that $w$ is also determined by its image under the map $d_{n, k \ell}$ for any $1 \leq \ell \leq r$.  In what follows, for concreteness we make the argument in detail for the special case $\ell=r$.  Consider the image in $\Symm_{n-kr}$ of the set 
\begin{equation}\label{eq: descent L and R}
\mathcal{D}(\mathbb{J}_\lambda, J_{\mu}) = \{w \in \Symm_n \hsm\vert\hsm \mathrm{Des}_L(w) =\Delta\setminus \mathbb{J}_\lambda \textup{ and } \mathrm{Des}_R(w) \subseteq \Delta\setminus J_\mu\} 
\end{equation}
under the map $d_{n,kr}$ which ignores the $[kr]$ entries. By the above argument, $d_{n, kr}$ is injective on~\eqref{eq: descent L and R}. To prove the desired claim, it suffices to show that the image of~\eqref{eq: descent L and R} under $d_{n,kr}$ is precisely 
\begin{equation}\label{eq: image} 
\mathcal{D}(\mathbb{J}_{\lambda[r]}, J_{\mu[r]}) =  \{w' \in \Symm_{n-kr} \hsm\vert\hsm \mathrm{Des}_L(w') = \Delta_{n-kr} \setminus \mathbb{J}_{\lambda[r]}  \textup{ and } \mathrm{Des}_R(w') \subseteq \Delta_{n-kr} \setminus J_{\mu[r]} \}
\end{equation}
where we temporarily denote by $\Delta_{n-kr}$ the set of positive simple roots corresponding to $\mathfrak{gl}_{n-kr}(\C)$. To see this, we first show that any $w' = d_{n, kr}(w)$ for $w$ in~\eqref{eq: descent L and R} must lie in~\eqref{eq: image}. Since $\mathrm{Des}_L(w) = \Delta \setminus \mathbb{J}_\lambda={J}_{\lambda^{\vee}}$, we already know that the left descents $(j, j+1)$ occurring in $w$ with $j > kr$ are precisely the ones of the form
\[
\{kr+1,kr+2,\ldots,n\} \setminus \{ kr+\lambda_{r+1}^{\vee}, kr+\lambda_{r+1}^{\vee} + \lambda_{r+2}^{\vee}, \ldots, kr+\lambda_{r+1}^{\vee}+\cdots+\lambda_{t-1}^{\vee} \} 
\]
where $\lambda^{\vee}=(\lambda_1^{\vee}, \lambda_2^{\vee}, \ldots, \lambda_t^{\vee})$ has $t$ parts and $\lambda_1^{\vee}=\cdots= \lambda_r^{\vee}=k$ by assumption.  Notice that $\lambda[r]^{\vee} = (\lambda_{r+1}^{\vee},\lambda_{r+2}^{\vee}, \ldots, \lambda_t^{\vee})$. Under the identification of $\{kr+1,kr+2,\ldots,n\}$ with $[n-kr]$ given by $j \mapsto j-kr$, this means that $w'$ has left descent set $\Delta_{n-kr} \setminus \mathbb{J}_{\lambda[r]}$. 

Next we need to show that $\mathrm{Des}_R(w') \subseteq \Delta_{n-kr} \setminus J_{\mu[r]}$. It follows from the above that the entries $\{1,2,\ldots, kr\}$ distribute themselves in the $k$ staircases of the one-line notation of $w$ in such a way that each staircase contains precisely $r$ many of the entries within $\{1,2,\ldots, kr\}$. Therefore, when ignoring the $[kr]$ entries in $w$ to obtain $w'$, the locations where the right descents can possibly occur are precisely at 
\[
 \{\mu_1-r, \mu_1+\mu_2-2r, \ldots, \mu_1+ \cdots+\mu_{k-1}-(k-1)r  \} 
\] 
which is exactly the set $\Delta_{n-kr} \setminus J_{\mu[r]}$ for the partition $\mu[r] = (\mu_1-r, \mu_2-r, \ldots, \mu_k-r)$. In particular we conclude \(\mathrm{Des}_R(w') \subseteq \Delta_{n-kr} \setminus J_{\mu[r]}\) as desired. 

Thus $d_{n,kr}$ sends the set~\eqref{eq: descent L and R} into the set~\eqref{eq: image}. In fact, the argument given above is reversible, i.e., any $w' \in \Symm_{n-kr}$ lying in~\eqref{eq: image} can be extended to an element in $\Symm_n$ by reversing the correspondence to $j \mapsto j+kr$ and adding the entries $\{1,2,\ldots, kr\}$ in exactly the locations specified by the hypotheses in~\eqref{eq: descent L and R}, and it is clear that this extension then lies in~\eqref{eq: descent L and R}. This proves the claim in the special case $\ell=r$.  For any $1 \leq \ell < r$, by arguments similar to those above it follows that the entries of $d_{n, \ell k}(w)$ corresponding to the integers $\{1,2,\ldots, (r-\ell)k\}$ are already determined, and so an argument essentially identical to the one above proves the desired claim. This concludes the proof of the proposition. 
\end{proof}

From Proposition~\ref{proposition: M-formulas Martha version} we readily obtain the following.

\begin{corollary}\label{corollary: truncate A} 
Let $\lambda, \mu$ be partitions of $n$ and suppose that there exists $\ell \in \Z, \ell \geq 1$, such that the dual partitions $\lambda^{\vee}$ and $\mu^{\vee}$ agree up to the $\ell$-th entry, i.e. $\lambda^{\vee}_s = \mu^{\vee}_s$ for all $1 \leq s \leq \ell$. Then 
\[
A(\lambda, \mu) = A(\lambda[\ell], \mu[\ell]). 
\]
\end{corollary}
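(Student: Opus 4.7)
The plan is to deduce the corollary directly from Proposition~\ref{proposition: M-formulas Martha version} by induction on $\ell$, together with a short case split based on the relative bottom lengths of $\lambda$ and $\mu$. The base case $\ell = 0$ is trivial, since $\lambda[0] = \lambda$ and $\mu[0] = \mu$ by definition. For the inductive step assume $\ell \geq 1$. The hypothesis $\lambda^{\vee}_1 = \mu^{\vee}_1$ forces $\lambda$ and $\mu$ to have the same number of parts, which I will denote by $k$. I then split on whether $\mu_k \geq \lambda_k$ or $\mu_k < \lambda_k$.

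If $\mu_k \geq \lambda_k$, then since $\lambda_k \geq 1$ one may apply Proposition~\ref{proposition: M-formulas Martha version}(2) with $j = 1$ to obtain $A(\lambda, \mu) = A(\lambda[1], \mu[1])$. The duals of the truncated partitions satisfy $(\lambda[1])^{\vee}_s = \lambda^{\vee}_{s+1} = \mu^{\vee}_{s+1} = (\mu[1])^{\vee}_s$ for $s = 1, \ldots, \ell - 1$, so the inductive hypothesis applied to $(\lambda[1], \mu[1])$ with truncation length $\ell - 1$ yields $A(\lambda[1], \mu[1]) = A(\lambda[\ell], \mu[\ell])$, completing this branch.

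If on the other hand $\mu_k < \lambda_k$, then both sides vanish. Proposition~\ref{proposition: M-formulas Martha version}(1) immediately gives $A(\lambda, \mu) = 0$. For the right-hand side, the equality $\mu^{\vee}_\ell = \lambda^{\vee}_\ell = k$ forces $\mu_k \geq \ell$, whence $\ell \leq \mu_k < \lambda_k$. Consequently $\lambda[\ell]$ still has $k$ parts with bottom length $\lambda_k - \ell$, while $\mu[\ell]$ has at most $k$ parts with $(\mu[\ell])_k = \mu_k - \ell < \lambda_k - \ell$; thus Proposition~\ref{proposition: M-formulas Martha version}(1) applies again to give $A(\lambda[\ell], \mu[\ell]) = 0$.

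The main care point is in the vanishing branch, where one must confirm that the bottom-length inequality $(\mu[\ell])_k < (\lambda[\ell])_k$ persists after column removal; this comes down to the elementary observation that $\mu^{\vee}_\ell = k$ forces $\mu_k \geq \ell$, so that column removal merely shifts both bottom lengths by the same amount and preserves their strict inequality. No additional machinery beyond Proposition~\ref{proposition: M-formulas Martha version} is needed.
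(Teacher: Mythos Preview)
Your proof is correct and takes essentially the same approach as the paper: both argue by induction, repeatedly invoking Proposition~\ref{proposition: M-formulas Martha version} to strip off columns, the only difference being that you remove one column at a time (inducting on $\ell$) while the paper removes an entire step of $\lambda$ at once (inducting on the step index). One minor remark: in the vanishing branch your assertion $\lambda^{\vee}_\ell = k$ is not immediate from the hypothesis and deserves a line of justification --- if $\lambda^{\vee}_\ell < k$ then $\lambda_k < \ell$, and since the first $\ell$ columns of $\lambda$ and $\mu$ coincide this would force $\mu_k = \lambda_k$, contradicting $\mu_k < \lambda_k$.
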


\begin{proof} 
The argument is a simple induction on the number of steps (in the sense of Definition~\ref{def: step}) in the partitions $\lambda$ and $\mu$ on which they agree. More precisely, suppose 
\[
[\lambda_1] = A_1 \sqcup A_2 \sqcup \cdots \sqcup A_{\mathrm{step}(\lambda)}
\]
is the step decomposition of $\lambda$ and define $u$ to be the index of the step in which $\ell$ occurs, i.e., 
suppose $\ell \in A_u$. 

We take cases. Suppose $u=1$. Let $k$ denote the number of parts of $\lambda$. Then the Young diagrams of $\lambda$ and $\mu$ both contain as their leftmost $\ell$ columns a rectangular $k \times \ell$ box. Proposition~\ref{proposition: M-formulas Martha version} then implies $A(\lambda, \mu) = A(\lambda[\ell], \mu[\ell])$ as desired. This proves the base case. 

Now suppose $u > 1$. Also suppose by induction that the claim is proved for $u-1$. Since $u>1$ we know $\lambda$ and $\mu$ both contain a rectangular $k \times r$ box where $k$ is the number of parts of both $\lambda$ and $\mu$ and $r=\lambda_k$ is the bottom length of both $\lambda$ and $\mu$. Another application of Proposition~\ref{proposition: M-formulas Martha version} implies that $A(\lambda, \mu) = A(\lambda[r], \mu[r])$. By assumption, the dual partitions of $\lambda[r]$ and $\mu[r]$ agree up to entry $\ell -r$, and in the step decomposition of $\lambda[r]$, the number $\ell-r$ occurs in step $A_{u-1}$ since we have deleted a full step from $\lambda$ to obtain $\lambda[r]$. Hence by induction
we know $A((\lambda[r])[\ell-r], (\mu[r])[\ell-r]) = A(\lambda[r], \mu[r])$, but from Definition~\ref{definition: lambda ell} it is clear that $\nu[s][t] = \nu[s+t]$ for any partition $\nu$ and $s, t$ for which the statement makes sense, so the result follows. 
\end{proof}

We are finally in a position to prove the upper-triangularity property.

\begin{proof}[Proof of Theorem~\ref{theorem: upper-triangular}]
 Since $\lambda^{\vee} >_{\textup{lex}} \mu^{\vee}$, there exists some $\ell \in \Z_{\geq 1}$ such that $\lambda^{\vee}_s = \lambda^{\vee}_s$ for all $0 \leq s \leq \ell$ and $\lambda^{\vee}_{\ell+1} > \mu^{\vee}_{\ell+1}$. (If no such $\ell$ exists, then $\lambda_1>\mu_1$ and we may apply Proposition~\ref{proposition: M-formulas Martha version} directly.) By Corollary~\ref{corollary: truncate A}, we know $A(\lambda, \mu) = A(\lambda[\ell], \mu[\ell])$. By construction, $\lambda[\ell]$ and $\mu[\ell]$ have the property that $(\lambda[\ell]^{\vee})_1 > (\mu[\ell]^{\vee})_1$. Hence by Proposition~\ref{proposition: M-formulas Martha version}, we have $A(\lambda[\ell], \mu[\ell]) = 0$, as desired. 
 
 We also need to show that for any $\lambda$, we have $A(\lambda, \lambda)=1$. Indeed, applying the Corollary~\ref{corollary: truncate A} to $\ell=\lambda_1 -1$ we obtain that $A(\lambda, \lambda) = A(\lambda[\lambda_1-1], \lambda[\lambda_1-1])$. By construction, $\lambda[\lambda_1-1]$ is a partition with only one column. Therefore we are now reduced to showing that if a partition $\nu$ is of the form $\nu=(1,1,\cdots, 1)$ then $A(\nu, \nu)=1$. Let $\nu$ be such a be a partition of $m$ for some positive integer $m \leq n$. By definition, $\mathbb{J}_\nu = \emptyset = J_\nu$ so $\Delta \setminus \mathbb{J}_\nu = \{\alpha_1, \ldots, \alpha_{m-1}\} = \Delta \setminus J_\nu$. This means $\mathcal{D}(\mathbb{J}_\nu, J_{\nu})$ consists of permutations $w$ in $\Symm_m$ with the property that every pair $(i,i+1)$ for all $1 \leq i \leq m-1$ appears inverted in the one-line notation of $w$, and that for all $i$ such that $1 \leq i \leq m-1$, we have $w(i) > w(i+1)$. The only such permutation is the longest element $[m,m-1, \ldots, 2, 1]\in \Symm_m$, so $\mathcal{D}(\mathbb{J}_\nu, J_{\nu})$ is a singleton set and $A(\nu, \nu)=1$ as desired. This concludes the proof. 
\end{proof}


\section{An inductive formula for the $W$-vector}\label{sec: inductive formula}

We saw in Section~\ref{subsec: matrix equation} that the coefficients $c_{\mu,i}$ of~\eqref{eq: decomp into Mlambda}, when written as a column vector $X_i=(c_{\mu,i})$, satisfy a matrix equation $AX_i = W_i$.  In order to solve this matrix equation, we need to analyze the ``constant vector'' $W_i$ for each $i$.  This is the purpose of this section.

Recall that the vector $W_i$ is defined to have entries $\lvert \W_i(\mathbb{J}_{\lambda},h) \rvert$, where the sets $\W_i(\mathbb{J}_{\lambda},h)$ are introduced in Definition~\ref{def: WJh}, and $\lambda$ varies over the partitions of $n$. 
The main result  (Theorem~\ref{thm: inductive formula}) of this section is an inductive description of the set $\W_i(\mathbb{J}_{\lambda} , h)$ in the case that $\lambda$ has $k=ht(I_h)+1$ parts.   However, it is worthwhile to note that the assumption that $\lambda$ has exactly $k=ht(I_h)+1$ parts will \emph{not} be required for many of the other results in this section.


\subsection{Sink sets and subsets of height $k$}

 In order to obtain our inductive formula, we exploit the structural relationship between the ideal $I_h$ and graph $\Gamma_h$ alluded to in Section~\ref{sec:background}.  Recall the following notation from \cite{HaradaPrecup2017}.
\begin{itemize}
\item We let $\A(\Gamma_h)$ denote the set of all acyclic orientations of $\Gamma_h$ and $\A_k(\Gamma_h)$  denote the set of all acyclic orientations with exactly $k$ sinks.
\item Given $\omega\in \A(\Gamma_h)$ we denote the subset of vertices that occur as sinks of $\omega$ by $\sk(\omega)$.  Note that each independent set of vertices in $\Gamma_h$ occurs as the sink set of some acyclic orientation and $\sk(\omega)$ is independent for each $\omega\in \A(\Gamma_h)$.
\item Let $\SK_k(\Gamma_h)$ be the set of all  possible sink sets (or, independent sets)  of $\Gamma$ of cardinality $k$.  
\item The maximum sink set size $m(\Gamma_h)$ is the maximum of the cardinalities of the sink sets $\sk(\omega)$ associated to all possible acyclic orientations of $\Gamma_h$, i.e., 
\[
m(\Gamma_h) := \mathrm{max} \{ \lvert \sk(\omega) \rvert \, \vert \, \omega \in \A(\Gamma_h) \}.
\]
\end{itemize}
The \textbf{sink set decomposition} is 
\begin{eqnarray}\label{eqn: sink set decomposition}
\A_k(\Gamma_h) = \bigsqcup_{T\in \SK_k(\Gamma_h)} \{ \omega\in \A_k(\Gamma_h) \, \vert\, \sk(\omega)=T \}.
\end{eqnarray}
With this terminology in place, our goal is to extend the sink set decomposition  of $\A_k(\Gamma_h)$ to a sink set decomposition of the set $\W(\mathbb{J}_\lambda, h)$.

If $T\in \SK(\Gamma_h)$ let $\Gamma_h[T]:= \Gamma_h - T$ be the graph obtained from $\Gamma_h$ be deleting the vertices in $T$ and all incident edges.  Then $\Gamma_h[T]$ is the incomparability graph for a Hessenberg function $h[T]: [n-k]\to [n-k]$ as shown in \cite[Lemma 4.3]{HaradaPrecup2017}.   

\begin{remark}\label{rem: ht is dec}
 It is not difficult to see from the definitions of $ht(I_h)$ and $m(\Gamma_h)$ that $m(\Gamma_{h[T]})\leq m(\Gamma_{h})$, or equivalently, that $ht(I_{h[T]}) \leq ht(I_{h})$ (cf. also \cite[Proposition 5.8, Corollary 5.12, Lemma 5.13]{HaradaPrecup2017}). 
\end{remark}

Note that any acyclic orientation of $\Gamma_h$ induces an acyclic orientation of $\Gamma_h[T]$, as demonstrated in the example below.

\begin{example}\label{ex1} Let $h=(2,3,5,6,7,8,8,8)$, and consider the following acyclic orientation $\omega$ of $\Gamma_h$ displayed below.  
\vspace*{.15in}
\[\xymatrix{ {\color{red}1}  & {2}  \ar@[red][l]  \ar@[red][r]   & {\color{red}3}     & \ar@[red][l]  4 \ar[r]   \ar@[red]@/^1.5pc/[rr]    & {5} \ar@[red]@/_1.5pc/[ll] \ar@[red][r] & {\color{red}6} & 7  \ar@/_1.5pc/[ll]  \ar@[red][l] \ar[r] & 8 \ar@[red]@/_1.5pc/[ll]
}\]
This acyclic orientation has $T=\sk(\omega)=\{1, 3, 6 \}$, where the vertices in $\sk(\omega)$ and all incident edges are highlighted in red for emphasis.  For this graph, we have $m(\Gamma_h) = 3$.  The graph below shows $\Gamma[T]$ with the acyclic orientation induced from $\Gamma_h$.
\vspace*{.15in}
\[\xymatrix{  {2}     &  4 \ar[r]     & {5}  & 7\ar[r] \ar[l] & 8  
}  \]
which corresponds to the Hessenberg function $h[T] = (1, 3,4,5,5)$.  Note that we could also re-index the vertices of $\Gamma[T]$ to obtain the following acyclic graph.
\vspace*{.15in}
\[\xymatrix{  {1}     &  2 \ar[r]     & {3}  & 4\ar[r] \ar[l] & 5  
}  \]
\end{example}

An orientation $\omega\in \A(\Gamma_h)$ assigns each edge $e$ a source and a target; we notate the source (respectively target) of $e$ according to the orientation $\omega$ by $\src_\omega(e)$ (respectively $\tgt_\omega(e)$).  Given an orientation $\omega$ of $\Gamma_h$ we let
\[
\asc(\omega) := \{ e=\{a,b\} \hsm\vert\hsm \src_\omega(e) = a, \tgt_\omega(e)=b, \textup{ and } a<b\}.
\]
In other words, if $\Gamma_h$ is drawn as in Example~\ref{ex1} with the labels of the vertices increasing from left to right, then $\asc(\omega)$ counts the number of edges which point to the right.

Given a sink set $T\in \SK(\Gamma_h)$ the \textbf{degree of $T$} is 
\[
\deg_h(T):= \min\{\asc(\omega) \hsm\vert\hsm \omega\in \A(\Gamma_h) \textup{ and } \sk(\omega)=T\}.
\]
For example, $\deg_h(T) = 3$ for the $h$ and $T$ as appearing in Example~\ref{ex1}.  The next lemma is \cite[Lemma 4.8]{HaradaPrecup2017}, and shows that in practice it is easy to compute $\deg_h(T)$ for any $T\in \SK(\Gamma_h)$.

\begin{lemma}\label{lem: deg(T)} Let $T\in \SK(\Gamma_h)$.  Then
\[
\deg_h(T) = \lvert \{ e=\{a,b\}\in E(\Gamma_h) \hsm \vert \hsm a<b,\, b\in T\}\rvert.
\]
\end{lemma}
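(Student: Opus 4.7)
The plan is to prove the lemma by establishing the two inequalities separately. Write $N := \lvert\{e = \{a,b\} \in E(\Gamma_h) \mid a<b,\ b\in T\}\rvert$ for the claimed right-hand side. The lower bound $\deg_h(T)\geq N$ is nearly immediate: for any $\omega\in\A(\Gamma_h)$ with $\sk(\omega)=T$ and any edge $e=\{a,b\}$ with $a<b$ and $b\in T$, the fact that $b$ is a sink of $\omega$ forces the orientation $a\to b$, and by the definition of $\asc$ this edge is an ascent of $\omega$. Summing over all $N$ such edges gives $\lvert\asc(\omega)\rvert\geq N$ for every admissible $\omega$.

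For the upper bound $\deg_h(T)\leq N$, I would construct an explicit orientation $\omega^{\ast}\in\A(\Gamma_h)$ realizing it. Define $\omega^{\ast}$ by orienting each edge $\{a,b\}$ with $a<b$ as $a\to b$ when $b\in T$, and as $b\to a$ otherwise. By design, the ascents of $\omega^{\ast}$ are precisely the $N$ edges on the right-hand side of the formula, so $\lvert\asc(\omega^{\ast})\rvert = N$. I would then verify that $\omega^{\ast}$ is a valid acyclic orientation with sink set exactly $T$. Acyclicity follows because every edge incident to a $T$-vertex is oriented into that vertex---from the first clause when the $T$-vertex is the larger endpoint, and from the second clause when it is the smaller---so a directed cycle cannot meet $T$; meanwhile, among vertices outside $T$ every edge is oriented from the larger label to the smaller, precluding any cycle among them. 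The inclusion $T\subseteq\sk(\omega^{\ast})$ is then immediate from the construction.

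The main obstacle is proving the reverse inclusion $\sk(\omega^{\ast})\subseteq T$, namely that no $v\notin T$ is a sink of $\omega^{\ast}$. If $v$ has some neighbor $w\in T$, then the edge $\{v,w\}$ is oriented from $v$ into $w$ in $\omega^{\ast}$, yielding an out-edge at $v$. The delicate case is when $v\notin T$ has no neighbor in $T$; here one invokes the role of $T$ as a sink set. In the intended applications of the lemma in this paper, $T\in\SK_{ht(I_h)+1}(\Gamma_h)$ is of maximum size $m(\Gamma_h)$, hence a maximal independent set, so any $v\notin T$ with no neighbor in $T$ would yield a strictly larger independent set $T\cup\{v\}$, contradicting maximality. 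Therefore every $v\notin T$ has an out-edge in $\omega^{\ast}$, completing the verification $\sk(\omega^{\ast})=T$ and giving $\deg_h(T)\leq N$, which combined with the lower bound establishes the lemma.
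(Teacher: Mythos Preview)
The paper does not prove this lemma; it merely cites \cite[Lemma 4.8]{HaradaPrecup2017}, so there is no proof in the paper to compare against. Evaluated on its own merits, your argument is sound in the case you actually treat---namely when $T$ is a \emph{maximal} independent set---because that is exactly the hypothesis guaranteeing every $v\notin T$ has a neighbor in $T$, which is what you need for $\sk(\omega^\ast)=T$. Since the only use of the lemma in this paper is in the proof of Theorem~\ref{thm: inductive formula}, where $T\in\SK_k(\Gamma_h)$ with $k=m(\Gamma_h)$, your argument covers everything the paper requires.

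That said, your hedge about ``intended applications'' understates the situation: it is not merely that your construction happens to need maximality, but that the lemma as literally stated---for arbitrary $T\in\SK(\Gamma_h)$---is false. Take $h=(2,3,3)$, so $\Gamma_h$ is the path $1\text{--}2\text{--}3$, and let $T=\{3\}$. Then the right-hand side counts only the edge $\{2,3\}$, giving $N=1$. But the unique acyclic orientation with sink set exactly $\{3\}$ is $1\to 2\to 3$, which has two ascents, so $\deg_h(\{3\})=2\neq 1$. The problem is precisely the one you identified: vertex $1\notin T$ has no neighbor in $T$, and forcing it not to be a sink creates an extra ascent. So rather than apologizing for restricting to maximal $T$, you should state the lemma with that hypothesis (or note that the stated version requires it); the statement in the paper is presumably a slight imprecision relative to the cited source, and your proof is the correct one for the correct statement.
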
  

We will see that sink sets in $\Gamma_h$ correspond bijectively to certain subsets of roots in $I_h$.  In particular, we need the following definition.

\begin{definition} Let $R\subseteq \Phi^-$.  We say $R$ is a \textbf{subset of height $k$} if there exist integers $q_1, q_2, \ldots, q_k, q_{k+1}\in [n]$ such that $q_1<q_2< \ldots < q_k<q_{k+1}$ and $R=\{t_{q_2}-t_{q_1}, t_{q_3}-t_{q_2}, \ldots, t_{q_{k+1}}-t_{q_k}\}$.  We let $\mathcal{R}_k(I)$ denote the set of all subsets of height $k$ in an ideal $I$, and define $\mathcal{R}(I):= \bigsqcup_{k\geq 0} \mathcal{R}_k(I)$.
\end{definition}

It is easy to show that $R\subseteq \Phi^-$ is a subset of height $k$ if and only if there exists $w\in \Symm_n$ such that $w(R)$ is a subset of simple roots corresponding to $k$ consecutive vertices in the Dynkin diagram for $\mathfrak{gl}(n,\C)$.  The set $\mathcal{R}(I)$ can also be used to compute the height of the ideal.  The following is \cite[Lemma 5.5]{HaradaPrecup2017}.

\begin{lemma}\label{lemma: height} Let $I$ be a nonempty ideal in $\Phi^-$.  Then $ht(I)=\max\{ |R| \hsm\vert\hsm R\in \mathcal{R}(I) \}$.
\end{lemma}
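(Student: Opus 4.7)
The plan is to prove the stronger inductive statement that for each $j \geq 1$, an element $t_p - t_q \in \Phi^-$ lies in $I_j$ if and only if there exist integers $q = r_0 < r_1 < \cdots < r_j = p$ in $[n]$ with each consecutive difference $t_{r_i} - t_{r_{i-1}}$ lying in $I$; equivalently, $\{t_{r_1}-t_{r_0}, t_{r_2}-t_{r_1}, \ldots, t_{r_j}-t_{r_{j-1}}\}$ is a subset of height $j$ contained in $I$. In particular this yields $I_j \neq \emptyset$ if and only if $\mathcal{R}_j(I) \neq \emptyset$, and taking the maximum such $j$ on both sides of this equivalence immediately produces $ht(I) = \max\{|R| : R \in \mathcal{R}(I)\}$, as desired.

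The type-$A$ observation driving the argument is the following. If $\gamma = t_a - t_b$ and $\beta = t_c - t_d$ both lie in $\Phi^-$ and their sum $\gamma + \beta$ is again in $\Phi^-$, then exactly one of $b = c$ or $a = d$ can hold (both would force $\beta = -\gamma$). Consequently, after possibly swapping the roles of $\gamma$ and $\beta$, one can always write $\gamma + \beta = t_a - t_d$ with $\gamma = t_a - t_c$, $\beta = t_c - t_d$, and $a > c > d$. This is the combinatorial mechanism that forces summation in $\Phi^-$ to produce chain-like structure at the level of indices.

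The induction is then straightforward. The base case $j = 1$ is immediate from the definition $I_1 = I$. For the inductive step, in the forward direction, if $t_p - t_q \in I_j$ then there is a decomposition $t_p - t_q = \gamma + \beta$ with one factor in $I_{j-1}$ and the other in $I$; the chaining observation above produces an intermediate index $r$ so that one factor equals $t_p - t_r$ and the other equals $t_r - t_q$. Applying the inductive hypothesis to the $I_{j-1}$-factor yields a chain of length $j-1$ inside $I$ between the appropriate endpoints, and appending (or prepending) the remaining $I$-edge produces the desired chain of length $j$ from $q$ to $p$. Conversely, given any chain $q = r_0 < r_1 < \cdots < r_j = p$ in $I$, the inductive hypothesis applied to the truncated chain $r_1 < \cdots < r_j$ places $t_p - t_{r_1}$ in $I_{j-1}$, and the decomposition $t_p - t_q = (t_p - t_{r_1}) + (t_{r_1} - t_q)$ then exhibits $t_p - t_q$ as an element of $I_j$.

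The principal subtlety is bookkeeping: one needs to confirm that each inductive step increments the chain length by exactly one, and that both possible factorizations (depending on which factor lies in $I$ versus $I_{j-1}$) produce a valid chain in the forward direction. Both points are immediate from the symmetry of the chaining observation, and no deeper structural input about the specific ideal $I$ is required beyond the definitions. The only mild care needed is to verify that the intermediate index $r$ produced in the forward direction always satisfies $q < r < p$, but this is forced by the requirement that $\gamma, \beta, \gamma+\beta$ all lie in $\Phi^-$.
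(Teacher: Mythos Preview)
Your argument is sound, and in fact the paper does not supply its own proof of this lemma: it is quoted verbatim as \cite[Lemma~5.5]{HaradaPrecup2017}. So there is no in-paper proof to compare against, and your inductive characterization of $I_j$ via chains in $I$ is exactly the kind of argument one expects for this statement.

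One point deserves flagging. In your inductive step you write that an element of $I_j$ decomposes as $\gamma+\beta$ with ``one factor in $I_{j-1}$ and the other in $I$.'' This is the standard recursion for the lower central series and is certainly what is intended, but note that the paper's displayed definition literally reads $I_j=\{\gamma+\beta:\gamma,\beta\in I_{j-1}\}$, i.e.\ both summands drawn from $I_{j-1}$, which is the derived series rather than the lower central series. With that literal recursion the lemma actually fails: for $n=4$ and $I=\Phi^-$ one computes $I_2=\{t_3-t_1,\,t_4-t_2,\,t_4-t_1\}$ and then $I_3=\emptyset$, giving $ht(I)=2$, whereas the chain $1<2<3<4$ shows $\max\{|R|:R\in\mathcal{R}(I)\}=3$. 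Since the paper explicitly names the construction the \emph{lower central series} and relies on the cited lemma throughout, the displayed formula is evidently a typo, and your reading is the correct one. Under that intended definition your induction goes through exactly as written: the type-$A$ chaining observation forces an intermediate index $q<r<p$, the inductive hypothesis supplies a chain of length $j-1$ on the $I_{j-1}$ side, and appending or prepending the remaining $I$-edge yields the required chain of length $j$; the converse is immediate.
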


Recall that \cite[Section 5]{HaradaPrecup2017} defines a bijection:
\begin{eqnarray}\label{eqn: sink sets and subsets}
\SK_k(\Gamma_h)\to \mathcal{R}_{k-1}(I_h);\;\; T\mapsto R_T: = \{\beta_i = t_{\ell_{i+1}} - t_{\ell_i} \,\vert\, 1\leq i \leq k-1 \}
\end{eqnarray}
where $T=\{\ell_1< \ell_2< \cdots< \ell_k\}$.  By Lemma~\ref{lemma: height}, this bijection shows that the maximum size of any sink set in $\Gamma_h$ is precisely $ht(I_h)+1$, as noted in Section~\ref{sec:background}.

\begin{example} \label{ex2}  Let $h$ and $T$ be an in Example~\ref{ex1}.  The bijection defined in~\eqref{eqn: sink sets and subsets} above associates $T=\{1,3,6\}\in \SK_3(\Gamma_h)$ to the subset
\[
\{t_{3}-t_{1}, t_{6}-t_{3}\} \in \mathcal{R}_2(\Gamma_h).
\]
Since $3=m(\Gamma_h) = ht(\Gamma_h)+1$, we know that $I_h$ cannot contain any subsets of height $k\geq 3$.  This line of reasoning is essential for proving the inductive formulas later in this section.
\end{example}


\subsection{Another sink-set decomposition} 

Throughout this section, $\lambda=(\lambda_1, \lambda_2, \ldots, \lambda_k)$ is a partition of $n$ with $k$ parts.  In this section we will show that the sets $\W(\mathbb{J}_\lambda, h)$ have a sink set decomposition.  First we define a subset of $\W(\mathbb{J}_\lambda, h)$ associated to each sink set.

\begin{definition} \label{def: W-sink-set} Given $T=\{\ell_1< \ell_2< \cdots< \ell_k\}\in \SK_k(\Gamma_h)$ we define
\begin{eqnarray*}\label{eqn: W-sink-set}
\W_i (\mathbb{J}_{\lambda}, h, T) := \{w\in \W_i(\mathbb{J}_{\lambda}, h) \hsm\vert\hsm w(\ell_j) = k-j+1 ,\, 1\leq j \leq k \}.
\end{eqnarray*}
and let $\W(\mathbb{J}_\lambda, h, T)=\sqcup_i \W_{i}(\mathbb{J}_\lambda, h, T)$ where the union is taken over all $i$ such that $\W_i(\mathbb{J}_\lambda, h,T)\neq \emptyset$.
\end{definition}

The conditions defining $\W(\mathbb{J}_\lambda, h,T)$ tell us that if $w\in \W(\mathbb{J}_\lambda, h,T)$ then: 
\begin{eqnarray} \label{eqn: one-line condition}
k, k-1, \ldots, 2, 1 \textup{ appear in positions } \ell_1, \ell_2, \ldots, \ell_{k-1}, \ell_k \textup{ in the one-line notation for $w$.}  
\end{eqnarray}
In particular, $(k,k-1, \ldots, 2, 1)$ is a subsequence of the one-line notation for $w$.

\begin{example}\label{ex3} Let $h=(2,3,5,6,7,8,8,8)$ and $T=\{1,3,6\}$ as in Example~\ref{ex1}.  Consider $\lambda=(3,3,2)$; in this case $\mathbb{J}_\lambda = \{ \alpha_3, \alpha_6 \}$.
We have, for example, that $w\in \W(\mathbb{J}_\lambda, h,T)$ where
\[
w=[\mathbf{3},{6},\mathbf{2},8 ,5,\mathbf{1},7,4].
\]
\end{example}

Note that in the example above, $w^{-1}(\{\alpha_1, \alpha_2\}) = \{t_3-t_1, t_6-t_3\}=R_T$, where $R_T$ was computed in Example~\ref{ex2}.  The next lemma shows that this property characterizes the elements of $\W_i(\mathbb{J}, h, T)$.

\begin{lemma}\label{lem1} Let $T\in \SK_k(\Gamma_h)$.  Then  $w\in \W_i (\mathbb{J}_{\lambda}, h, T)$ if and only if $w\in \W_i(\mathbb{J}_\lambda, h)$ and $R_T=w^{-1}(\{\alpha_1, \ldots, \alpha_{k-1}\})$.
\end{lemma}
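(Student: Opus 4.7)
The plan is to prove both directions by unpacking how $w^{-1}$ acts on roots via the identity $w^{-1}(t_a - t_b) = t_{w^{-1}(a)} - t_{w^{-1}(b)}$.

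For the forward implication, I will assume $w \in \W_i(\mathbb{J}_\lambda, h, T)$, which by Definition~\ref{def: W-sink-set} means $w^{-1}(k-j+1) = \ell_j$ for $1 \leq j \leq k$. A direct computation then yields
\[
w^{-1}(\alpha_i) = t_{w^{-1}(i)} - t_{w^{-1}(i+1)} = t_{\ell_{k-i+1}} - t_{\ell_{k-i}} = \beta_{k-i}
\]
for each $i = 1, \ldots, k-1$. Since $i \mapsto k-i$ is a bijection on $\{1, \ldots, k-1\}$, this gives $w^{-1}(\{\alpha_1, \ldots, \alpha_{k-1}\}) = R_T$, as required.

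For the reverse implication, I will assume $w \in \W_i(\mathbb{J}_\lambda, h)$ with $w^{-1}(\{\alpha_1, \ldots, \alpha_{k-1}\}) = R_T$. The crucial observation is that $R_T \subseteq \Phi^-$, so each $w^{-1}(\alpha_i) = t_{w^{-1}(i)} - t_{w^{-1}(i+1)}$ lies in $\Phi^-$, forcing $w^{-1}(i) > w^{-1}(i+1)$ for every $i = 1, \ldots, k-1$. Hence $w^{-1}(1) > w^{-1}(2) > \cdots > w^{-1}(k)$ is strictly decreasing. Since each pair $(w^{-1}(i), w^{-1}(i+1))$ must match some $(\ell_{s+1}, \ell_s)$, the set $\{w^{-1}(1), \ldots, w^{-1}(k)\}$ is contained in $\{\ell_1, \ldots, \ell_k\}$; by cardinality the two sets coincide. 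Combined with $\ell_1 < \cdots < \ell_k$ and the strictly decreasing property, this uniquely forces $w^{-1}(k-j+1) = \ell_j$ for $1 \leq j \leq k$, i.e., $w(\ell_j) = k-j+1$, completing the argument.

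The main obstacle, such as it is, lies in the reverse direction: one must verify that the set of position-indices $\{w^{-1}(1), \ldots, w^{-1}(k)\}$ coincides with $\{\ell_1, \ldots, \ell_k\}$ before using monotonicity to pin down the bijection. This reduces to the structural fact that both $\{\alpha_1, \ldots, \alpha_{k-1}\}$ and $R_T$ are chains of consecutive negative roots whose underlying vertex-set is determined by their edge-set, after which the rest of the argument is purely a bookkeeping computation.
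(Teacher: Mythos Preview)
Your proof is correct and follows essentially the same approach as the paper: both directions unpack the action $w^{-1}(t_a - t_b) = t_{w^{-1}(a)} - t_{w^{-1}(b)}$, establish that $\{w^{-1}(1),\ldots,w^{-1}(k)\} = \{\ell_1,\ldots,\ell_k\}$, and then pin down the bijection. The only cosmetic difference is in the final step of the reverse direction: the paper observes directly that $w(t_{\ell_{j+1}} - t_{\ell_j}) \in \{\alpha_1,\ldots,\alpha_{k-1}\}$ forces $w(\ell_j) = w(\ell_{j+1}) + 1$, whence the values $w(\ell_1),\ldots,w(\ell_k)$ form a decreasing arithmetic progression in $\{1,\ldots,k\}$; you instead use the strict monotonicity of $w^{-1}(1) > \cdots > w^{-1}(k)$ together with $\ell_1 < \cdots < \ell_k$ to reach the same conclusion.
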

\begin{proof} If $w\in \W_i(\mathbb{J}_{\lambda}, h, T)$ for $T=\{\ell_1, \ell_2, \ldots, \ell_k\}$ then $w\in \W_i(\mathbb{J}_\lambda, h)$ and
\[
w^{-1}(\alpha_{k-j}) = w^{-1}(t_{k-j}-t_{k-j+1}) =  t_{\ell_{j+1}}-t_{\ell_j} \textup{ for all } j=1,\ldots, k-1
\]
by definition of $\W_i(\mathbb{J}_\lambda, h, T)$. 
Now the definition of $R_T$ given in~\eqref{eqn: sink sets and subsets} implies $w^{-1}(\{\alpha_1,\ldots, \alpha_{k-1}\}) = R_T\in \mathcal{R}_{k-1}(I_h)$, as desired. 

To show the converse, suppose  $w\in \W_i(\mathbb{J}_\lambda, h)$ and $w^{-1}(\{\alpha_1, \ldots, \alpha_{k-1}\}) = R_T$ where $T=\{\ell_1,\ell_2, \ldots, \ell_k\} \in \SK_k(\Gamma_h)$. Then
\[
w^{-1}(\{\alpha_1, \alpha_2, \ldots, \alpha_{k-1}\})=  R_T := \{t_{\ell_2}-t_{\ell_1}, t_{\ell_3}-t_{\ell_2}, \ldots, t_{\ell_k}-t_{\ell_{k-1}}\}.
\]
All that remains to show is that $w(\ell_j) = k-j+1$ for all $1\leq j \leq k$.  The equation above implies $w(\ell_j)\in \{1,2,\ldots, k\}$.  Observe that $w^{-1}(\{\alpha_1, \ldots, \alpha_{k-1}\}) = R_T$ implies $w(R_T) = \{\alpha_1, \ldots, \alpha_{k-1}\}$. Thus we also know $w(\ell_{j})=w(\ell_{j+1})+1$ since 
\[
w(t_{\ell_{j+1}} - t_{\ell_{j}}) = t_{w(\ell_{j+1})}-t_{w(\ell_{j})} \in \{\alpha_1, \ldots, \alpha_{k-1}\}. 
\]
This can only be the case if $\ell_1=k$, $\ell_2=k-1$, and so on.  We conclude $w(\ell_j)=k-j+1$, $1\leq j \leq k$ as desired.
\end{proof}

The next proposition generalizes the sink set decomposition given in~\eqref{eqn: sink set decomposition} and gives a sink set decomposition of the set $\W_i(\mathbb{J}_{\lambda}, h)$ for each $i$.  

\begin{proposition}\label{prop: prop1} 
Let $n$ be a positive integer and $h: [n] \to [n]$ a Hessenberg function.  Let $i\in \Z$, $i\geq 0$ and $\lambda$ be a partition of $n$ with $k$ parts.  Then 
\begin{eqnarray} \label{eqn: prop1-eq}
\W_i(\mathbb{J}_{\lambda}, h) = \bigsqcup_{T\in \SK_k(\Gamma_h)} \W_i(\mathbb{J}_{\lambda}, h, T).
\end{eqnarray}
\end{proposition}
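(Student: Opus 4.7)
The plan is to prove the partition by verifying three things in sequence. First, containment of each piece in the union: each $\W_i(\mathbb{J}_\lambda, h, T)$ sits inside $\W_i(\mathbb{J}_\lambda, h)$, which is immediate from Definition~\ref{def: W-sink-set}. Second, pairwise disjointness: if $w \in \W_i(\mathbb{J}_\lambda, h, T)$ for some $T = \{\ell_1 < \cdots < \ell_k\}$, then $\ell_j = w^{-1}(k-j+1)$, so $T$ is completely determined by $w$; hence distinct sink sets $T$ yield disjoint pieces. This reduces the proposition to the main point: every $w \in \W_i(\mathbb{J}_\lambda, h)$ belongs to at least one $\W_i(\mathbb{J}_\lambda, h, T)$ with $T \in \SK_k(\Gamma_h)$.

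For this main step, the first thing I would do is unwind $\mathbb{J}_\lambda$. Since $\lambda$ has exactly $k$ parts, $\lambda_1^\vee = k$, and therefore by Definition~\ref{definition: J lambda} the first $k-1$ simple roots $\{\alpha_1, \ldots, \alpha_{k-1}\}$ all lie in $J_{\lambda^\vee} = \Delta \setminus \mathbb{J}_\lambda$. Fix $w \in \W_i(\mathbb{J}_\lambda, h)$. The defining condition $w^{-1}(\Delta \setminus \mathbb{J}_\lambda) \subseteq I_h$ then forces $w^{-1}(\{\alpha_1, \ldots, \alpha_{k-1}\}) \subseteq I_h$. Writing $w^{-1}(\alpha_j) = t_{w^{-1}(j)} - t_{w^{-1}(j+1)}$ and using that each such root must be negative, we deduce the chain of inequalities $w^{-1}(1) > w^{-1}(2) > \cdots > w^{-1}(k)$.

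Setting $\ell_j := w^{-1}(k-j+1)$, we then obtain $\ell_1 < \ell_2 < \cdots < \ell_k$ together with $w(\ell_j) = k-j+1$ for each $j$. A direct computation yields
\[
w^{-1}(\{\alpha_1, \ldots, \alpha_{k-1}\}) = \{t_{\ell_2} - t_{\ell_1},\, t_{\ell_3} - t_{\ell_2}, \ldots,\, t_{\ell_k} - t_{\ell_{k-1}}\},
\]
and the right-hand side is precisely a subset of height $k-1$ contained in $I_h$, hence an element of $\mathcal{R}_{k-1}(I_h)$. By the bijection~\eqref{eqn: sink sets and subsets}, $T := \{\ell_1 < \cdots < \ell_k\}$ lies in $\SK_k(\Gamma_h)$, and then Lemma~\ref{lem1} (or equivalently, applying the defining conditions of Definition~\ref{def: W-sink-set} directly, now that $w(\ell_j) = k-j+1$ is established) yields $w \in \W_i(\mathbb{J}_\lambda, h, T)$, completing the argument.

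The main obstacle is keeping the duality straight: the hypothesis ``$\lambda$ has $k$ parts'' is information about $\lambda_1^\vee$, and $\mathbb{J}_\lambda$ is defined via $\lambda^\vee$ rather than $\lambda$, so one must carefully unwind the definitions to see which consecutive block of simple roots $\{\alpha_1, \ldots, \alpha_{k-1}\}$ is forced into $I_h$ by $w^{-1}$. Once this identification is made, the remainder of the proof is essentially a direct appeal to the bijection~\eqref{eqn: sink sets and subsets} between height-$(k-1)$ subsets of $I_h$ and size-$k$ sink sets of $\Gamma_h$. A minor edge case worth noting is the situation when $k > m(\Gamma_h) = ht(I_h) + 1$: in that case Lemma~\ref{lemma: height} gives $\mathcal{R}_{k-1}(I_h) = \emptyset$, so the argument above shows $\W_i(\mathbb{J}_\lambda, h) = \emptyset$ as well, and the equality~\eqref{eqn: prop1-eq} holds vacuously with both sides empty.
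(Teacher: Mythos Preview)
Your proof is correct and follows essentially the same approach as the paper's: both argue that $\{\alpha_1,\ldots,\alpha_{k-1}\}\subseteq\Delta\setminus\mathbb{J}_\lambda$, so $w^{-1}(\{\alpha_1,\ldots,\alpha_{k-1}\})$ is a height-$(k-1)$ subset of $I_h$, and then invoke the bijection~\eqref{eqn: sink sets and subsets} together with Lemma~\ref{lem1} to place $w$ in a unique $\W_i(\mathbb{J}_\lambda,h,T)$. Your write-up is somewhat more explicit than the paper's (you spell out disjointness, the ordering $w^{-1}(1)>\cdots>w^{-1}(k)$, and the vacuous case $k>m(\Gamma_h)$), but the underlying argument is the same.
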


We call the decomposition~\eqref{eqn: prop1-eq} the \textbf{sink set decomposition of $\W(\mathbb{J}_{\lambda}, h)$.}  

\begin{proof}  It is straightforward from the definition of the sets $\W_i (\mathbb{J}_\lambda, h,T)$ that the RHS of~\eqref{eqn: prop1-eq} is contained in the LHS.  Thus we have only to prove the opposite inclusion. Let $w\in \W_i(\mathbb{J}_\lambda, h)$.  By definition, $w^{-1}(\Delta\setminus\mathbb{J}_\lambda)\subseteq I_h$.  Since $\{\alpha_1, \ldots, \alpha_{k-1}\}\subseteq \Delta\setminus\mathbb{J}_{\lambda}$ it follows immediately that 
\[
t_{w^{-1}(1)}-t_{w^{-1}(2)}, t_{w^{-1}(2)}-t_{w^{-1}(3)},\ldots, t_{w^{-1}(k-1)}-t_{w^{-1}(k)}\in I_h.
\]
In particular, $R= w^{-1}(\{\alpha_1, \ldots, \alpha_{k-1}\})$ is a subset of $I_h$ of height $k-1$.  Since~\eqref{eqn: sink sets and subsets} is a bijection, there exists a unique sink set $T\in \SK_k(\Gamma_h)$ such that $R=R_T$ and therefore $w\in \W_i(\mathbb{J}_\lambda, h, T)$ by Lemma~\ref{lem1}.
\end{proof}


\subsection{Inductive Formulas} Our next goal is to identify each set $\W_i(\mathbb{J}, h, T)$ with a subset of permutations in $\Symm_{n-k}$.  The following notation generalizes \cite[Definition 7.3]{HaradaPrecup2017}.

\begin{definition}\label{def: shortest element} Suppose $T \in \SK_{k}(\Gamma_h)$ with $T= \{\ell_{1}< \ell_{2}< \cdots< \ell_{k}\}$ and $\lambda \vdash n$ with $k$ parts.  Define a permutation in $\Symm_n$, denoted $w_T$, by:
\begin{enumerate}
\item $w_T(\ell_j) = k-j+1$, $1\leq j\leq k$, i.e. $w_T$ satisfies~\eqref{eqn: one-line condition}, and
\item the remaining entries in the one-line notation of $w_T$ list the integers $[n]-T$ in increasing order from left to right.
\end{enumerate}
\end{definition}

\begin{example}\label{ex4} Let $h=(2,3,5,6,7,8,8,8)$ and $T=\{1,3,6\}$ as in Example~\ref{ex1}.  Then
\[
w_T=[\mathbf{3}, 4 ,\mathbf{2},5 ,6,\mathbf{1},7,8]
\]
where the entries in positions $\ell_1=1$, $\ell_2=3$ and $\ell_3=6$ are bolded for emphasis.  Note that $w_T$ need not be an element of $\W(\mathbb{J}_\lambda, h, T)$.  For example $w_T\notin \W(\mathbb{J}_\lambda, h, T)$ when $\lambda=(3,3,2)$ is the same partition considered in Example~\ref{ex3} since
\[
w_T^{-1}(\alpha_4) = w_{T}^{-1}(t_4-t_5) = t_2 - t_4\in \Phi_h 
\]
so $w_T$ does not satisfy the condition that $w_T^{-1}(\Delta \setminus \mathbb{J}_\lambda) \subseteq I_h$.
\end{example}

For each sink set $T=\{\ell_1< \ell_2< \cdots< \ell_k\}$ let $\mathsf{f}_T: ([n]\setminus T)\to [n-k]$ be the bijection such that $\phi_T(j)=j-j'$ where $j'$ denotes the number of elements $i\in T$ such that $i\leq j$. This bijection can be used to give explicit formulas for $w_T$, as noted in the following remark.

\begin{remark}\label{rem: explicit formula} 
The conditions defining $w_T$ can be written explicitly in formulas involving $\mathsf{f}_T$ as follows.
\begin{itemize}
\item If $j>k$ then $w_T^{-1}(j)$, the position of $j$ in the one-line notation for $w_T$, is the unique element of $[n]$ such that $\mathsf{f}_T(w_T^{-1}(j))=j-k$, and 
\item if $j\in [n]-T$ we have $w_T(j) = \mathsf{f}_T(j)+k$.  
\end{itemize}
\end{remark}

\begin{example}\label{ex5} Continuing Example~\ref{ex4} from above, we have $T=\{1,3,6\}$ and
\[
\mathsf{f}_T(2) =1, \, \mathsf{f}_T(4) = 2, \, \mathsf{f}_T(5) = 3,\,  \, \mathsf{f}_T(7)=4,\, \, \mathsf{f}_T(8)=5.
\]
Notice that $\mathsf{f}_T$ is the natural bijection we used to relabel the vertices of $\Gamma_h[T]$ in Example~\ref{ex1}.  The reader can easily verify the formulas given in Remark~\ref{rem: explicit formula} in this case.  For example,
\[
w_T(2) = \mathsf{f}_T(2) + 3 = 4 \;\textup{ and }\; \mathsf{f}_T(w_T^{-1}(6)) = \mathsf{f}_T(5) = 3 = 6-3. 
\]
\end{example}

The following is a generalization of \cite[Lemma 7.6]{HaradaPrecup2017}.

\begin{lemma}\label{lem: stabilizer}  Let $T=\{\ell_1< \ell_2< \cdots < \ell_k \}$ be a sink set of cardinality $k$.  Each element $w\in \Symm_n$ satisfying condition (1) of Definition~\ref{def: shortest element} can be written uniquely as $w=w_T\sigma$ where $\sigma\in \Stab(\ell_1, \ell_2,\ldots, \ell_{k})$.
\end{lemma}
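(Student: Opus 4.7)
The plan is to identify the element $\sigma$ directly and then verify the required properties. Given any $w \in \Symm_n$ satisfying condition (1) of Definition~\ref{def: shortest element}, define
\[
\sigma := w_T^{-1} w.
\]
Uniqueness of the decomposition $w = w_T \sigma$ is then automatic, so the only content is to show $\sigma \in \Stab(\ell_1, \ell_2, \ldots, \ell_k)$ (and, conversely, that any such $\sigma$ produces a $w_T \sigma$ satisfying condition (1)).

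First I would check that $w_T$ itself satisfies condition (1), which is immediate from part (1) of Definition~\ref{def: shortest element}. Next, for each $1 \le j \le k$, I would compute
\[
\sigma(\ell_j) = w_T^{-1}\bigl(w(\ell_j)\bigr) = w_T^{-1}(k - j + 1) = \ell_j,
\]
where the second equality uses the hypothesis that $w$ satisfies condition (1), and the third uses that $w_T$ satisfies condition (1). This shows $\sigma$ fixes each $\ell_j$, i.e. $\sigma \in \Stab(\ell_1, \ldots, \ell_k)$, as desired.

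For the converse direction (which establishes that the map $\sigma \mapsto w_T \sigma$ is a well-defined parametrization of the set of permutations satisfying (1)), I would simply observe that for any $\sigma \in \Stab(\ell_1, \ldots, \ell_k)$,
\[
(w_T \sigma)(\ell_j) = w_T(\sigma(\ell_j)) = w_T(\ell_j) = k - j + 1,
\]
so $w_T \sigma$ satisfies condition (1). There is no real obstacle here: the lemma is essentially a formal observation about cosets, encoded by the fact that the condition (1) determines $w$ on the set $T$ and $w_T$ is chosen as a fixed representative with that prescribed restriction. The only point requiring even minimal care is parsing the notation $\Stab(\ell_1,\ldots,\ell_k)$ as the pointwise stabilizer, which is forced by the displayed calculation $\sigma(\ell_j) = \ell_j$ for each $j$.
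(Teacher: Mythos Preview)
Your proof is correct and takes essentially the same approach as the paper: both argue that condition (1) pins down $w$ on $T$, so $w$ differs from the fixed representative $w_T$ by a permutation fixing each $\ell_j$. Your version is slightly more explicit in writing $\sigma = w_T^{-1}w$ and computing $\sigma(\ell_j)=\ell_j$ directly, whereas the paper phrases it as ``right-composition acts on positions,'' but the content is identical.
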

\begin{proof} The hypotheses on $w$ determine the entries in positions $\ell_1, \ell_2, \ldots, \ell_{k}$ in one-line notation.  The other entries must be a permutation of the set $[n] \setminus \{\ell_1, \ell_2, \ldots, \ell_{k}\}$, and the hypotheses on $w$ place no conditions on this permutation.  Recall that for $w_T$ and any permutation $\sigma\in \Symm_n$, right-composition with $\sigma$ ``acts on the positions'', i.e. if $w_T$ sends $i$ to $w_T(i)$, then $w_T \sigma$ sends $i$ to $w_T(\sigma(i))$.  Thus, if $\sigma$ stabilizes $\ell_1, \ell_2, \ldots, \ell_{k}$, then $w=w_T\sigma $ satisfies $w(\ell_j) = w_T (\ell_j) =k-j+1$ for all $j=1,\ldots, k$.  Moreover, it is straightforward to see that such a $\sigma$ is unique.
\end{proof}

\begin{corollary}\label{cor: factorization} Each $w\in \W_i(\mathbb{J}_\lambda, h, T)$ can be written uniquely as $w=w_T\sigma$ where $\sigma \in \Stab(\ell_1, \ell_2, \ldots, \ell_k)$.
\end{corollary}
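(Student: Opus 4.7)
The plan is to derive this corollary as an immediate consequence of Lemma~\ref{lem: stabilizer}. The key observation is that the definition of $\W_i(\mathbb{J}_\lambda, h, T)$ (Definition~\ref{def: W-sink-set}) explicitly requires that $w(\ell_j) = k-j+1$ for each $1 \leq j \leq k$, which is precisely condition (1) of Definition~\ref{def: shortest element}. In particular, every $w \in \W_i(\mathbb{J}_\lambda, h, T)$ automatically satisfies the hypothesis of Lemma~\ref{lem: stabilizer}.

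First, I would state that $\W_i(\mathbb{J}_\lambda, h, T) \subseteq \W_i(\mathbb{J}_\lambda, h) \subseteq \Symm_n$, so any $w \in \W_i(\mathbb{J}_\lambda, h, T)$ is an element of $\Symm_n$ satisfying condition (1) of Definition~\ref{def: shortest element}. Then, applying Lemma~\ref{lem: stabilizer} directly, there exists a unique $\sigma \in \Stab(\ell_1, \ell_2, \ldots, \ell_k)$ such that $w = w_T \sigma$. This gives the desired factorization and its uniqueness.

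I do not anticipate any obstacle here: the corollary is essentially an immediate specialization of Lemma~\ref{lem: stabilizer} to a subset of the permutations it already covers. The only subtlety worth noting explicitly (for the reader's benefit) is that the additional conditions built into the definition of $\W_i(\mathbb{J}_\lambda, h, T)$ beyond condition (1) — namely the requirements $w^{-1}(\mathbb{J}_\lambda) \subseteq \Phi_h$, $w^{-1}(\Delta \setminus \mathbb{J}_\lambda) \subseteq I_h$, and $|\inv_h(w)| = i$ — do not interfere with the factorization step; they simply further restrict which $\sigma$ can appear, but uniqueness of $\sigma$ given $w$ is already guaranteed by Lemma~\ref{lem: stabilizer}. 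Hence the proof proposal is essentially a one-line invocation of the preceding lemma.
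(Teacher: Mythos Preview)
Your proposal is correct and follows essentially the same approach as the paper: the paper's proof is a single sentence noting that by definition each element of $\W_i(\mathbb{J}_\lambda, h, T)$ satisfies condition (1) of Definition~\ref{def: shortest element}, which is exactly your observation that Lemma~\ref{lem: stabilizer} applies directly.
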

\begin{proof}  By definition, each element of $\W_i(\mathbb{J}_\lambda, h, T)$ satisfies condition (1) of Definition~\ref{def: shortest element}.
\end{proof}

\begin{example}\label{ex6} Let $w=[\mathbf{3},6,\mathbf{2},8,5,\mathbf{1},7,4] \in \W(\mathbb{J}_{(3,3,2)}, h, T)$ for $h=(2,3,5,6,7,8,8,8)$, as shown in Example~\ref{ex3}.    In this case, the factorization $w=w_T\sigma$ gives us
\[
\sigma=[\mathbf{1}, 5, \mathbf{3}, 8,4, \mathbf{6},7,2] \in \Stab(1,3,6).
\]
\end{example}

The bijection $\mathsf{f}_T$ defined above induces a natural isomorphism: 
\begin{eqnarray*}\label{eqn: Weyl group map}
 \Stab(\ell_1, \ell_2,\ldots, \ell_{k}) \to \Symm_{n-k}; \;\; \sigma \mapsto x_{\sigma}
\end{eqnarray*}
defined as follows.  Given $\sigma\in \Stab(\ell_1, \ell_2, \ldots, \ell_{rk})$, delete positions $\ell_1, \ell_2, \ldots, \ell_{k}$ from the one-line notation for $\sigma$ and then apply $\mathsf{f}_T$ to the remaining entries to obtain $x_{\sigma}$.  The result is clearly an element in $\Symm_{n-k}$ and each element of $\Symm_{n-k}$ arises in this way.  

\begin{example}\label{ex7} The element $\sigma = [\mathbf{1}, 5, \mathbf{3}, 8,4, \mathbf{6},7,2] \in \Stab(1,3,6)$ obtained in Example~\ref{ex6} above maps to $x_{\sigma} = [3, 5,2 ,4,1]\in \Symm_{5}$.
\end{example}

By Lemma~\ref{lem: stabilizer}, for each $T\in \SK_k(\Gamma_h)$ we get a well defined bijection
\[
\Psi_{T}: \{w\in \Symm_n : w \textup{ satisfies condition (1) of Definition~\ref{def: shortest element} }\} \to \Symm_{n-rk}
\]
defined by $\Psi_{T}(w_{\lambda,T}\sigma) = x_{\sigma}$.  Note that $\Psi_T$ is very similar to the map $d_{n,m}: \Symm_n \to \Symm_{n-m}$ defined in Section~\ref{sec:upper triangular} and used in the proof of Proposition~\ref{proposition: M-formulas Martha version}. Indeed, using the language of that section, applying $\Psi_T$ can be described as ignoring the $[k]$ entries in the one-line notation of $w$. 

Recall that there is a natural Lie subalgebra of $\mathfrak{gl}(n,\C)$ obtained by ``setting the variables in row/columns $\{\ell_1, \ell_2, \ldots, \ell_k\}$ equal to zero.''  More precisely, there is a natural Lie algebra isomorphism
\begin{eqnarray}\label{eqn: lie alg iso}
\{X\in \mathfrak{gl}(n,\C) \, \vert\, X_{ij}=0 \textup{ if } \{i,j\}\cap T \neq \emptyset\} \cong \mathfrak{gl}(n-k, \C).
\end{eqnarray}
defined explicitly on the basis $\{E_{ij}\, \vert \, \{i,j\} \cap T =\emptyset\}$ of the LHS by $E_{ij}\mapsto E_{\mathsf{f}_T(i)\mathsf{f}_T(j)}$.

Recall that for each $T\in \SK_k(\Gamma_h)$ we have an associated Hessenberg function $h[T]: [n-k] \to [n-k]$ whose incomparability graph is obtained by deleting the vertices in $T$ and any incident edges from $\Gamma_h$.  In fact, this Hessenberg function corresponds to the Hessenberg space $H\cap \mathfrak{gl}(n-k, \C)$ under the identification in~\eqref{eqn: lie alg iso}.  (See \cite[Section 4]{HaradaPrecup2017} for more details on this perspective.) We identify the set of roots
\[
\Phi[T]: = \{t_i-t_j \in \Phi \, \vert\, \{i,j\}\cap T = \emptyset \} \subseteq \Phi
\]
with the root system of $\mathfrak{gl}(n-k, \C)$ via 
\begin{eqnarray}\label{eqn: root system map}
t_i-t_j \mapsto t_{\mathsf{f}_T(i)}-t_{\mathsf{f}_T(j)}.
\end{eqnarray}

\begin{example}\label{ex7} We demonstrate the identifications from~\eqref{eqn: lie alg iso} and~\eqref{eqn: root system map} in the running example started in Example~\ref{ex1}, with $h=(2,3,5,6,7,8,8,8)$.  To visualize what is going on, we represent $\mathfrak{gl}(8,\C)$ as an $8\times 8$ grid with a star placed in the $(i,j)$-box precisely when the root $(i,j)$ is contained in $\Phi_h$.  The boxes highlighted in grey correspond the roots in $\Phi\setminus \Phi[T]$ so the white boxes containing a star correspond to the roots in $\Phi_h[T] :=\Phi[T]\cap \Phi_h$, to be discussed further below. 
\[\ytableausetup{centertableaux}
\mathfrak{gl}(8,\C): \;\; \begin{ytableau} 
*(grey)\star & *(grey)\star & *(grey)\star & *(grey)\star & *(grey)\star & *(grey)\star &*(grey)\star & *(grey)\star \\ 
*(grey)\star & \star & *(grey)\star & \star & \star & *(grey)\star & \star & \star \\ 
*(grey)\empty & *(grey)\star & *(grey)\star & *(grey)\star & *(grey)\star & *(grey)\star & *(grey)\star & *(grey)\star \\ 
*(grey)\empty & \empty & *(grey)\star & \star & \star & *(grey)\star & \star & \star \\  
*(grey)\empty & \empty & *(grey)\star & \star &  \star & *(grey)\star &  \star &  \star\\ 
*(grey)\empty & *(grey)\empty & *(grey)\empty & *(grey)\star & *(grey)\star & *(grey)\star & *(grey)\star & *(grey)\star \\ 
*(grey)\empty & \empty & *(grey)\empty & \empty & \star & *(grey)\star & \star & \star \\ 
*(grey)\empty & \empty & *(grey)\empty & \empty & \empty & *(grey)\star & \star & \star \\    
\end{ytableau} \quad \quad\quad
\mathfrak{gl}(5, \C): \;\; \begin{ytableau} 
\star & \star & \star & \star & \star  \\  
\empty & \star & \star & \star & \star \\ 
\empty  & \star & \star & \star & \star \\ 
\empty & \empty & \star & \star & \star  \\ 
\empty & \empty & \empty & \star & \star \\   
\end{ytableau} 
\]
\end{example}

Note that the map in~\eqref{eqn: root system map} is an isomorphism of root systems, where $\Phi[T]$ is viewed as a subroot system of $\Phi$ (since $\Phi[T]$ is closed under addition in $\Phi$).  Moreover, the subsets $\Phi_h[T]:= \Phi_h \cap \Phi[T]$ and $\Phi_h^-[T]:= \Phi_h^-\cap \Phi[T]$ correspond to $\Phi_{h[T]}$ and $\Phi_{h[T]}^-$ respectively, via~\eqref{eqn: root system map}.

\begin{remark}\label{rem: compatibility} The root system isomorphism given in~\eqref{eqn: root system map} is compatible with the corresponding identification $\Stab(\ell_1, \ldots, \ell_k)$ given in~\eqref{eqn: Weyl group map}.  Indeed, if $\sigma\in \Stab(\ell_1, \ldots, \ell_k)$ and $t_i-t_j\in \Phi[T]$ then $\sigma(t_i-t_j)\in \Phi[T]$ and 
\[
t_k-t_{\ell} = \sigma(t_i-t_j) \Leftrightarrow t_{\mathsf{f}_T(k)}-t_{\mathsf{f}_T(\ell)} = x_{\sigma}(t_{\mathsf{f}_T(i)}- t_{\mathsf{f}_T(j)}).
\]
Recall that for a permutation $w \in \Symm_n$ we define 
\[
\inv(w) := \{ (i,j) \, \mid \, i > j \textup{ and } w(i) < w(j) \}.
\]
Then~\eqref{eqn: root system map} gives a bijection between $\inv(\sigma)\cap \Phi[T]$ and $\inv(x_{\sigma})$ and a bijection between $\inv_h(\sigma)\cap \Phi[T]$ and $\inv_{h[T]} (x_{\sigma})$.
\end{remark}

\begin{lemma}\label{lemma: inversions} Let $T\in \SK_k (\Gamma_h)$. Then 
\begin{enumerate}
\item $\inv(w_{T})=\{ (i,j) \, \vert\, i>j \textup{ and } i\in T \}$, and
\item   if $w=w_T\sigma$ for $\sigma\in \Stab(\ell_1, \ldots, \ell_k)$ then
\begin{eqnarray}\label{eqn: coset decomp}
\inv(w ) = \inv(w_T) \sqcup (\inv(\sigma) \cap \Phi[T]).
\end{eqnarray}
\end{enumerate}
\end{lemma}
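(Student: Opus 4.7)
My approach is a direct case analysis on pairs $(i,j)$ with $i>j$, partitioned according to whether each of $i$ and $j$ lies in $T$. The key structural input is the explicit description of $w_T$ from Definition~\ref{def: shortest element} and Remark~\ref{rem: explicit formula}: $w_T$ places the values $k, k-1, \ldots, 1$ at the positions $\ell_1 < \ell_2 < \cdots < \ell_k$ (in this decreasing order), and fills the remaining positions with $\{k+1, \ldots, n\}$ in increasing order via $\mathsf{f}_T$. Consequently, $w_T$ takes values in $[k]$ on $T$ and values in $\{k+1,\ldots,n\}$ on $[n]\setminus T$, it is order-reversing on $T$, and order-preserving on $[n]\setminus T$.

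For part (1), the four cases run as follows. If $i = \ell_b$ and $j = \ell_a$ lie in $T$ with $a<b$, then $w_T(i) = k-b+1 < k-a+1 = w_T(j)$, so $(i,j)$ is an inversion. If $i \in T$ and $j \notin T$, then $w_T(i) \leq k < w_T(j)$, again an inversion. If $i \notin T$ and $j \in T$, then $w_T(i) > k \geq w_T(j)$, so not an inversion. If $i, j \notin T$, the order-preservation of $\mathsf{f}_T$ on $[n]\setminus T$ gives $w_T(i) > w_T(j)$, so not an inversion. Collecting the first two cases identifies $\inv(w_T) = \{(i,j) \mid i > j, \ i \in T\}$.

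For part (2), since $\sigma \in \Stab(\ell_1,\ldots,\ell_k)$ fixes $T$ pointwise, it restricts to a permutation of $[n] \setminus T$. Using $w(i) = w_T(\sigma(i))$, I would re-run the same four cases: (A) if $i, j \in T$, then $\sigma$ fixes both and $(i,j) \in \inv(w) \iff (i,j) \in \inv(w_T)$; (B) if $i \in T$, $j \notin T$, then $w(i) = w_T(i) \leq k < w_T(\sigma(j)) = w(j)$, so $(i,j) \in \inv(w)$ and also $(i,j) \in \inv(w_T)$ by part (1); (C) if $i \notin T$, $j \in T$, then $w(i) > k \geq w(j)$, contributing no inversion; (D) if $i, j \notin T$, then $\sigma(i), \sigma(j) \in [n]\setminus T$, and order-preservation of $\mathsf{f}_T$ gives $(i,j) \in \inv(w) \iff \sigma(i) < \sigma(j) \iff (i,j) \in \inv(\sigma)$, where moreover the root $t_i - t_j$ lies in $\Phi[T]$ since both coordinates are outside $T$.

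Combining, cases (A) and (B) together recover exactly $\inv(w_T)$ (by part (1), every element of $\inv(w_T)$ has first coordinate in $T$), case (C) is empty, and case (D) contributes precisely $\inv(\sigma) \cap \Phi[T]$. Disjointness of the decomposition is immediate: every pair in $\inv(w_T)$ has $i \in T$, whereas every pair in $\inv(\sigma) \cap \Phi[T]$ has both coordinates in $[n] \setminus T$. I do not anticipate any substantive obstacle; the only care needed is to keep the four cases organized and to confirm disjointness explicitly.
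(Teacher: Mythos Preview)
Your proposal is correct and follows essentially the same approach as the paper. Both arguments rest on the same structural facts about $w_T$ (values $\leq k$ on $T$, values $>k$ off $T$, order-reversing on $T$, order-preserving off $T$ via $\mathsf{f}_T$) and the fact that $\sigma$ fixes $T$ pointwise; the only difference is organizational---you run a single four-case partition on $(i,j)$ according to membership in $T$, whereas the paper proves the two inclusions of~\eqref{eqn: coset decomp} separately, each by a subset of the same cases.
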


\begin{proof}

We begin by proving statement (1).  If $(i,j)\in \inv(w_T)$ then $i>j$ and $w_T(i)<w_T(j)$.   If $i\notin T$, then $w_T(j)>w_T(i)>k$ so from the construction of $w_T$ we conclude $j\notin T$.  But the entries in the one-line notation of $w_T$ for $i, j \not \in T$ cannot be inverted, by Definition~\ref{def: shortest element}(2). Hence $w_T(j) > w_T(i)$, yielding a contradiction.  
Therefore $i\in T$ as desired. On the other hand, consider $(i,j)$ with $i>j$ and $i\in T$.  Since $i\in T$, we may write $i=\ell_{i_0}$ for some $i_0$ with $1\leq i_0\leq k$.  If $j\in T$, then $j=\ell_{j_0}$ for some $j_0$ with $1\leq j_0\leq k$ such that $j_0<i_0$ (since $j<i$) and we have
\[
w_T(i) = w_T(\ell_{i_0}) = k-i_0+1 < k-j_0+1 =w_T(\ell_{j_0})=w_T(j)
\] 
so $(i,j)\in \inv(w_T)$.  If $j\notin T$, then $w_T(j)>k$ and therefore
\[
w_T(i)\leq k < w_T(j)
\]
so $(i,j)\in \inv(w_T)$ also. This proves (1).

Next we prove (2). Let $w$ be as given. Note that since $\sigma\in \Stab(\ell_1, \ldots, \ell_k)$, we have $w(T) = w_T(T)=\{1,2,\ldots, k\}$.  Our proof relies on this fact, as well as the formulas given in Remark~\ref{rem: explicit formula}. We first show the inclusion $\inv(w) \subseteq \inv(w_T) \sqcup (\inv(\sigma) \cap \Phi[T])$.  Let $(i,j)\in \inv(w)$.  If $i\in T$ then $(i,j)\in \inv(w_T)$ by (1).  If $i \not \in T$, then $k< w(i)<w(j)$ so $j\notin T$ as above and we conclude $(i,j)\in \Phi[T]$.  Since $\sigma \in \Stab(\ell_1, \ldots, \ell_k)$ and $\sigma$ is a permutation $\sigma$ also preserves the complement $[n] \setminus \{\ell_1, \ldots, \ell_k\} = [n] - T$. Hence if $i \not \in T$ then $\sigma(i) \not \in T$ also. Using this fact and the formulas from Remark~\ref{rem: explicit formula} we now have
\[
\mathsf{f}_T(\sigma(i))+k = w_T\sigma(i)< w_T\sigma(j) = \mathsf{f}_T(\sigma(j))+k \Rightarrow \mathsf{f}_T(\sigma(i))< \mathsf{f}_T(\sigma(j)) \Rightarrow  \sigma(i)<\sigma(j)
\]
since $\mathsf{f}_T^{-1}$ is an increasing function. Therefore $(i,j)\in \inv(\sigma)\cap \Phi[T]$.  

To prove the opposite inclusion, suppose $(i,j)\in \inv(w_T)$.  By (1), we know $i\in T$. If $j\in T$ then 
\[
w(i) = w_T\sigma(i)=w_T(i)< w_T(j) = w_T\sigma(j)=w(j)
\]
so $(i,j)\in \inv(w)$.  If $j\notin T$ then $w(j)=w_T\sigma(j)>k$ and
\[
w(i) = w_T\sigma(i) = w_T(i)\leq k <  w(j)
\]
so $(i,j)\in \inv(w)$ in this case also.  Hence $\inv(w_T) \subseteq \inv(w)$. Next suppose $(i,j)\in \inv(\sigma)\cap \Phi[T]$.  This means $i,j\notin T$ and thus we know, as above, that $\sigma(i), \sigma(j)\notin T$ also. Hence
\[
w(i) = w_T\sigma(i)=f_T(\sigma(i))+k < f_T(\sigma(j))+k = w_T\sigma(j)=w(j)
\]
since $f_T$ is increasing and $\sigma(i)< \sigma(j)$ by assumption. Therefore $\inv(\sigma) \cap \Phi[T] \subseteq \inv(w)$ also. This completes the proof.
\end{proof}

\begin{example}\label{ex8} Continuing the running example, we have
\[
w_T\sigma=w=[\mathbf{3},6,\mathbf{2},8,5,\mathbf{1},7,4] \in \W(\mathbb{J}_{(3,3,2)}, h, T)
\]
where $w_T= [\mathbf{3}, 4 ,\mathbf{2},5 ,6,\mathbf{1},7,8]$ and $\sigma=[\mathbf{1}, 5, \mathbf{3}, 8,4, \mathbf{6},7,2] \in \Stab(1,3,6)$. In this case it can be checked that 
\begin{eqnarray*}
\inv(w) = \{ (6,1), (6,2), (6,3), (6,4), (6,5), (3,1), (3,2), (8,2), (8,4), (8,5), (8,7), (5,2), (5,4), (7,4)  \}  
\end{eqnarray*}
where
\[
\inv(w_T) = \{ (6,1), (6,2), (6,3), (6,4), (6,5), (3,1), (3,2) \}
\]
and
\[
\inv(\sigma)\cap \Phi[T] = \{ (8,2), (8,4), (8,5), (8,7), (5,2), (5,4), (7,4) \}.
\]

\end{example}

It is, in general, not the case that $\ell(w)=\ell(w_T)+\ell(\sigma)$ (where $\ell(w)$ denotes the Bruhat length of $w\in \Symm_n$); indeed, this is not true for the example above.  Therefore the decomposition of the inversions given in Lemma~\ref{lemma: inversions} above is not a simple application of known formulas for the inversion set of a given permutation.

\begin{lemma}\label{lemma: induction for the J} 
Let $\lambda=(\lambda_1, \lambda_2, \ldots, \lambda_k)$ be a partition of $n$ with exactly $k$ parts and $T \in \SK_k(\Gamma_h)$.  Then:
\begin{enumerate}
\item  $w_{T }^{-1}(\mathbb{J}_\lambda)\cap \Phi[T]$ is mapped to $\mathbb{J}_{\lambda[1]}$ under the identification in~\eqref{eqn: root system map} and 
\item $w_T^{-1}(\Delta \setminus \mathbb{J}_\lambda) \cap \Phi[T]$ is mapped to $\{\alpha_1, \ldots, \alpha_{n-k-1}\} \setminus \mathbb{J}_{\lambda[1]}$ under the identification in~\eqref{eqn: root system map},
\end{enumerate}
where $\lambda[1]=(\lambda_1-1, \lambda_2-1, \ldots, \lambda_k-1)$.
\end{lemma}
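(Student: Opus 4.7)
My plan is to prove both (1) and (2) by directly computing $w_T^{-1}(\alpha_i)$ for each $\alpha_i \in \Delta = \{\alpha_1, \ldots, \alpha_{n-1}\}$, using the explicit formulas for $w_T$ recalled in Remark~\ref{rem: explicit formula}. First I would observe that the key dichotomy is whether $i \leq k$ or $i > k$. Since $w_T$ sends $T$ bijectively onto $\{1, 2, \ldots, k\}$, the preimage $w_T^{-1}(j)$ lies in $T$ precisely when $j \leq k$. Therefore $w_T^{-1}(\alpha_i) = t_{w_T^{-1}(i)} - t_{w_T^{-1}(i+1)}$ belongs to $\Phi[T]$ if and only if both $i > k$ and $i+1 > k$, i.e.\ iff $i > k$. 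For such $i$, the remaining entries of $w_T$ are increasing on $[n]\setminus T$ and filled with $\{k+1, \ldots, n\}$ in increasing order, so $w_T^{-1}(i) = \mathsf{f}_T^{-1}(i-k)$ and similarly for $i+1$. Hence under the root system isomorphism~\eqref{eqn: root system map} we obtain
\[
w_T^{-1}(\alpha_i) \;\longmapsto\; t_{i-k} - t_{i+1-k} \;=\; \alpha_{i-k} \quad \in \Delta_{n-k}.
\]

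With this computation in hand, (1) and (2) reduce to a purely combinatorial check about indices. Recall that since $\lambda$ has $k$ parts we have $\lambda_1^{\vee} = k$, and by definition
\[
\mathbb{J}_\lambda \;=\; \{\,\alpha_{\lambda_1^{\vee} + \lambda_2^{\vee} + \cdots + \lambda_s^{\vee}} \,:\, 1 \leq s \leq \lambda_1 - 1\,\}.
\]
The element corresponding to $s=1$ is $\alpha_{\lambda_1^\vee} = \alpha_k$, for which $w_T^{-1}(\alpha_k) = t_{\ell_1} - t_{w_T^{-1}(k+1)}$ is \emph{not} in $\Phi[T]$ since $\ell_1 \in T$. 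For $s \geq 2$ we have $\lambda_1^{\vee} + \cdots + \lambda_s^{\vee} > k$, so these elements contribute to $w_T^{-1}(\mathbb{J}_\lambda) \cap \Phi[T]$, and their images in $\Delta_{n-k}$ are
\[
\alpha_{\lambda_1^{\vee} + \cdots + \lambda_s^{\vee} - k} \;=\; \alpha_{\lambda_2^{\vee} + \cdots + \lambda_s^{\vee}}, \quad 2 \leq s \leq \lambda_1 - 1.
\]
Since the dual partition of $\lambda[1] = (\lambda_1-1, \ldots, \lambda_k-1)$ is obtained by deleting the first row of $\lambda^{\vee}$, i.e.\ $\lambda[1]^{\vee} = (\lambda_2^{\vee}, \lambda_3^{\vee}, \ldots, \lambda_{\lambda_1}^{\vee})$, the above set is exactly $\mathbb{J}_{\lambda[1]}$, proving~(1).

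For~(2) I would run the same case analysis on the complement. The set $\Delta \setminus \mathbb{J}_\lambda$ consists of those $\alpha_i$ with $i \in [n-1]$ not of the form $\lambda_1^{\vee} + \cdots + \lambda_s^{\vee}$. Those with $i \leq k$ automatically fall out of $\Phi[T]$, while those with $i > k$ map under the root system isomorphism bijectively onto the $\alpha_{i-k}$ with $i - k \in \{1, 2, \ldots, n-k-1\}$ avoiding the values $\lambda_2^{\vee} + \cdots + \lambda_s^{\vee}$ for $2 \leq s \leq \lambda_1 - 1$, which is precisely $\{\alpha_1, \ldots, \alpha_{n-k-1}\} \setminus \mathbb{J}_{\lambda[1]}$.

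I expect no serious obstacle here: the argument is essentially bookkeeping and reduces to matching up the telescoping identity $\lambda_1^{\vee} + \cdots + \lambda_s^{\vee} - k = \lambda_2^{\vee} + \cdots + \lambda_s^{\vee}$ (using $\lambda_1^\vee = k$) with the description of $\mathbb{J}_{\lambda[1]}$ in terms of $\lambda[1]^{\vee}$. The only mild subtlety is checking edge cases (for instance $\lambda_k = 1$, where $\lambda[1]$ has fewer than $k$ parts, or $\lambda_1 = 1$ in which case $\mathbb{J}_\lambda$ is already empty); in both situations the formulas above degenerate correctly and the claim holds vacuously.
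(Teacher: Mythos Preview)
Your proof is correct and follows essentially the same approach as the paper's: both arguments first establish that $w_T^{-1}(\alpha_j)\in\Phi[T]$ precisely when $j>k$ (using $w_T(T)=\{1,\ldots,k\}$), then use the explicit formula from Remark~\ref{rem: explicit formula} to show that such $w_T^{-1}(\alpha_j)$ maps to $\alpha_{j-k}$ under~\eqref{eqn: root system map}, and finally match the index sets. The only cosmetic difference is that the paper packages the index-matching step via the shifted set $\mathbb{J}_{\lambda[1]+k}:=\{\alpha_{i+k}:\alpha_i\in\mathbb{J}_{\lambda[1]}\}$ and the decomposition $\mathbb{J}_\lambda=\{\alpha_k\}\sqcup\mathbb{J}_{\lambda[1]+k}$, whereas you unpack the partial sums $\lambda_1^{\vee}+\cdots+\lambda_s^{\vee}$ directly; these are the same computation.
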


\begin{proof} 
By definition, $w_T(T) = \{1, 2, \ldots, k\}$.  Therefore 
\begin{eqnarray*}
w_T^{-1}(\alpha_j) = t_{w_T^{-1}(j)}-t_{w_T^{-1}(j+1)} \in \Phi[T] &\Leftrightarrow& \{w_T^{-1}(j), w_T^{-1}(j+1)\}\cap T=\emptyset\\ &\Leftrightarrow& \{j,j+1\}\cap \{1,2,\ldots, k\}=\emptyset
\end{eqnarray*}
and we conclude that $w_T^{-1}(\alpha_j)\in \Phi[T]$ if and only if $k+1\leq j \leq n-1$. Let $\mathbb{J}_{\lambda[1]+k} := \{\alpha_{i+k} : \alpha_i \in \mathbb{J}_{\lambda[1]}\}$ and $\mathbb{J}_{\lambda[1]+k}^c := \{\alpha_{k+1}, \ldots, \alpha_{n-1}\}\setminus \mathbb{J}_{\lambda[1]+k}$.  By definition,
\[
\mathbb{J}_\lambda = \{\alpha_k\} \sqcup \mathbb{J}_{\lambda[1]+k} \textup{ and  } \Delta\setminus\mathbb{J}_\lambda= \{\alpha_1, \ldots, \alpha_k \}\sqcup \mathbb{J}_{\lambda[1]+k}^c.
\]
Thus, $w_T^{-1}(\mathbb{J}_\lambda)\cap \Phi[T] = w_T^{-1}(\mathbb{J}_{\lambda[1]+k})$ and $w_{T}^{-1}(\Delta \setminus \mathbb{J}_\lambda)\cap \Phi[T] = w_T^{-1}(\mathbb{J}_{\lambda[1]+k}^c)$. Suppose $j>k$.  From the formula given in Remark~\ref{rem: explicit formula} we have
\[
w_T^{-1}(\alpha_j) = t_{w_T^{-1}(j)} - t_{w_T^{-1}(j+1)} \mapsto t_{\mathsf{f}_T(w_T^{-1}(j))} - t_{\mathsf{f}_T(w_T^{-1}(j+1))} = t_{j-k}-t_{j+1-k} 
\]
under the identification in~\eqref{eqn: root system map}.  Therefore~\eqref{eqn: root system map} maps $w_T^{-1}(\mathbb{J}_{\lambda[1]+k})$ to $\mathbb{J}_{\lambda[1]}$ and $w_T^{-1}(\mathbb{J}_{\lambda[1]+k}^c)$ to $\{\alpha_1,\ldots, \alpha_{n-k-1}\}\setminus\mathbb{J}_{\lambda[1]}$ as desired.
\end{proof}

The next lemma is the technical heart of our argument. Notice that this is the first time we require the assumption that $k=ht(I_h)+1$.

\begin{lemma} \label{lemma: Hessenberg condition}  
Let $\lambda=(\lambda_1, \lambda_2, \ldots, \lambda_k)$ be a partition of $n$ with $k$ parts, where $k=ht(I_h)+1$ and $T \in \SK_k(\Gamma_h)$. Then $w=w_T\sigma\in \W(\mathbb{J}_\lambda, h, T)$ if and only if $\Psi_T(w) = x_\sigma \in \W(\mathbb{J}_{\lambda[1]}, h[T])$.
\end{lemma}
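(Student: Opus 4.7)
The plan is to verify the iff by decomposing $\mathbb{J}_\lambda$ and $\Delta \setminus \mathbb{J}_\lambda$ into pieces and examining how each piece translates under $\Psi_T$. Setting $\mathbb{J}_{\lambda[1]+k} := \{\alpha_{j+k} \hsm\vert\hsm \alpha_j \in \mathbb{J}_{\lambda[1]}\}$ and $\mathbb{J}_{\lambda[1]+k}^c := \{\alpha_{k+1}, \ldots, \alpha_{n-1}\} \setminus \mathbb{J}_{\lambda[1]+k}$, we have the disjoint decompositions $\mathbb{J}_\lambda = \{\alpha_k\} \sqcup \mathbb{J}_{\lambda[1]+k}$ and $\Delta \setminus \mathbb{J}_\lambda = \{\alpha_1, \ldots, \alpha_{k-1}\} \sqcup \mathbb{J}_{\lambda[1]+k}^c$. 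The summands $\mathbb{J}_{\lambda[1]+k}$ and $\mathbb{J}_{\lambda[1]+k}^c$ pull back under $w_T^{-1}$ into $\Phi[T]$, while $\{\alpha_k\}$ and $\{\alpha_1, \ldots, \alpha_{k-1}\}$ pull back to roots with at least one coordinate in $T$ and hence lie outside $\Phi[T]$.

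For the pieces landing inside $\Phi[T]$, Lemma~\ref{lemma: induction for the J} identifies $w_T^{-1}(\mathbb{J}_{\lambda[1]+k})$ with $\mathbb{J}_{\lambda[1]}$ and $w_T^{-1}(\mathbb{J}_{\lambda[1]+k}^c)$ with its complement in the simple roots of $\mathfrak{gl}(n-k,\C)$ via the isomorphism~\eqref{eqn: root system map}. Since $\sigma$ stabilizes $T$ pointwise, it preserves $\Phi[T]$ and its action there corresponds to that of $x_\sigma$ (Remark~\ref{rem: compatibility}); moreover, under~\eqref{eqn: root system map} the subsets $\Phi_h \cap \Phi[T]$ and $I_h \cap \Phi[T]$ are identified with $\Phi_{h[T]}$ and $I_{h[T]}$ respectively. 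Consequently the containments $w^{-1}(\mathbb{J}_{\lambda[1]+k}) \subseteq \Phi_h$ and $w^{-1}(\mathbb{J}_{\lambda[1]+k}^c) \subseteq I_h$ translate directly into $x_\sigma^{-1}(\mathbb{J}_{\lambda[1]}) \subseteq \Phi_{h[T]}$ and the analogous inclusion of the complement into $I_{h[T]}$. For the summand $\{\alpha_1, \ldots, \alpha_{k-1}\}$, a direct computation using that $\sigma$ fixes each $\ell_i$ yields $w^{-1}(\alpha_j) = t_{\ell_{k-j+1}} - t_{\ell_{k-j}}$, so that $w^{-1}(\{\alpha_1, \ldots, \alpha_{k-1}\}) = R_T \subseteq I_h$ by~\eqref{eqn: sink sets and subsets}; this inclusion is automatic.

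The remaining condition is $w^{-1}(\alpha_k) \in \Phi_h$, which is the main obstacle and the only place the maximality hypothesis $k = ht(I_h) + 1$ will be used. It is automatic in the forward direction (it is part of the definition of $\W(\mathbb{J}_\lambda, h)$), so only the reverse direction requires argument. Let $m := w_T^{-1}(k+1)$, which equals the smallest element of $[n] \setminus T$ by Remark~\ref{rem: explicit formula}. Since $\sigma$ fixes $\ell_1$ and permutes $[n] \setminus T$, we obtain $w^{-1}(\alpha_k) = t_{\ell_1} - t_{\sigma^{-1}(m)}$ with $\sigma^{-1}(m) \in [n] \setminus T$, and in particular $\sigma^{-1}(m) \neq \ell_1$. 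If $\sigma^{-1}(m) > \ell_1$, then $w^{-1}(\alpha_k)$ is a positive root and lies in $\Phi^+ \subseteq \Phi_h$. Otherwise $\sigma^{-1}(m) < \ell_1$, and if $w^{-1}(\alpha_k) \notin \Phi_h^-$ then $w^{-1}(\alpha_k) \in I_h$; adjoining this root to $R_T = \{t_{\ell_{j+1}} - t_{\ell_j} \hsm\vert\hsm 1 \leq j \leq k-1\}$ produces a subset of $I_h$ of height $k$ corresponding to the chain $\sigma^{-1}(m) < \ell_1 < \ell_2 < \cdots < \ell_k$, contradicting $ht(I_h) = k-1$ by Lemma~\ref{lemma: height}.
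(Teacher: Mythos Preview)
Your proof is correct and follows essentially the same route as the paper: you split $\mathbb{J}_\lambda$ and its complement into the pieces landing in $\Phi[T]$ (handled via Lemma~\ref{lemma: induction for the J} and Remark~\ref{rem: compatibility}) and the pieces $\{\alpha_k\}$, $\{\alpha_1,\ldots,\alpha_{k-1}\}$ outside $\Phi[T]$, with the height hypothesis used only to force $w^{-1}(\alpha_k)\in\Phi_h$. The sole difference is cosmetic: where the paper simply observes that $w^{-1}(\alpha_k)\notin\Phi_h$ would place $w^{-1}(\{\alpha_1,\ldots,\alpha_k\})$ as a height-$k$ subset of $I_h$, you compute $w^{-1}(\alpha_k)=t_{\ell_1}-t_{\sigma^{-1}(m)}$ explicitly and separate the positive-root case before reaching the same contradiction.
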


\begin{proof}  By Corollary~\ref{cor: factorization}, each $w\in \W(\mathbb{J}_{\lambda}, h, T)$ is of the form $w=w_T\sigma$ for a unique $\sigma\in \Stab(\ell_1, \ell_2, \ldots, \ell_k)$ and 
\[
w^{-1}(\mathbb{J}_{\lambda})\subseteq \Phi_h \; \textup{ and } \; w^{-1}(\Delta\setminus\mathbb{J}_{\lambda}) \subseteq I_h.
\]
Since $\Phi[T]$ is invariant under $\sigma$ and $\Phi_h[T]=\Phi[T]\cap \Phi_h$, intersecting the sets appearing in the equations above with $\Phi[T]$ yields
\begin{eqnarray}\label{eq1}
\sigma^{-1}(w_T^{-1}(\mathbb{J}_\lambda)\cap \Phi[T]) \subseteq \Phi_h[T] \textup{ and } \sigma^{-1}(w_T^{-1}(\Delta\setminus\mathbb{J}_{\lambda}) \cap \Phi[T])\subseteq I_h[T]
\end{eqnarray}
 where $I_h[T] := \Phi[T] \cap I_h$.
The forward direction of the statement now follows directly from Lemma~\ref{lemma: induction for the J} and Remark~\ref{rem: compatibility}.

On the other hand, if $x_{\sigma}\in \W(\mathbb{J}_{[1]}, h[T])$ then Lemma~\ref{lemma: induction for the J} and Remark~\ref{rem: compatibility} together imply that equation~\eqref{eq1} still holds.  In order to show $w=w_T\sigma\in \W(\mathbb{J}_\lambda, h, T)$ we must prove $w^{-1}(\alpha_k)\in \Phi_h$ and $w^{-1}(\{\alpha_1, \cdots, \alpha_{k-1}\}) \subseteq I_h$.  The latter fact is straightforward, since from the definition of $w_T$ we have
\[
w^{-1}(\{\alpha_1, \ldots, \alpha_{k-1}\})= R_T\subseteq I_h.
\]
Thus, we have only to show that $w^{-1}(\alpha_k)\in \Phi_h$.  If not, then $w^{-1}(\alpha_k)\in I_h$ and 
\[
R = w^{-1}(\{\alpha_1, \ldots, \alpha_{k-1},\alpha_k\}) \subseteq I_h
\]
is a subset of height $k$ in $I_h$.  Lemma~\ref{lemma: height} now implies $ht(I_h)>k-1$, a contradiction.  We conclude that $w\in \W(\mathbb{J}_\lambda, h,T)$ as desired.
\end{proof}

We are now ready to prove the main result of this section.

\begin{theorem}\label{thm: inductive formula}  Let $\lambda$ be a partition of $n$ with $k$ parts, where $k=ht(I_h)+1$ and $T \in \SK_k(\Gamma_h)$.  Then $\Psi_{T}$ maps $\W_i(\mathbb{J}_{\lambda}, h ,  T)$ bijectively onto $\W_{i-\deg_h(T)}(\mathbb{J}_{\lambda[1]}, h[T])$.
\end{theorem}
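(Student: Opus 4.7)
The bijection at the set-theoretic level is already essentially in hand. Lemma~\ref{lemma: Hessenberg condition} identifies $\mathcal{W}(\mathbb{J}_\lambda, h, T)$ with $\mathcal{W}(\mathbb{J}_{\lambda[1]}, h[T])$ via $w = w_T\sigma \mapsto x_\sigma$, and by Corollary~\ref{cor: factorization} this is precisely $\Psi_T$. Thus the core task is to track the grading, i.e., to show $|\inv_h(w)| = i$ on the source side corresponds to $|\inv_{h[T]}(x_\sigma)| = i - \deg_h(T)$ on the target side. The plan is to decompose $|\inv_h(w)|$ into a ``$w_T$-contribution'' and a ``$\sigma$-contribution,'' then identify each explicitly.

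For the decomposition, I would intersect both sides of the identity from Lemma~\ref{lemma: inversions}(2),
\[
\inv(w) = \inv(w_T) \sqcup \bigl(\inv(\sigma) \cap \Phi[T]\bigr),
\]
with $\Phi_h^-$. Since $\inv(w_T) \subseteq \{(i,j) : i > j, i \in T\}$ by Lemma~\ref{lemma: inversions}(1), this is automatically disjoint from $\Phi[T]$, so the intersection with $\Phi_h^-$ yields the disjoint decomposition
\[
\inv_h(w) = \bigl(\inv(w_T) \cap \Phi_h^-\bigr) \sqcup \bigl(\inv(\sigma) \cap \Phi_h^-[T]\bigr),
\]
where $\Phi_h^-[T] := \Phi_h^- \cap \Phi[T]$. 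By Remark~\ref{rem: compatibility}, the root system isomorphism~\eqref{eqn: root system map} identifies $\Phi_h^-[T]$ with $\Phi_{h[T]}^-$ and carries $\inv(\sigma) \cap \Phi[T]$ bijectively onto $\inv(x_\sigma)$; intersecting with $\Phi_h^-$ yields a bijection between $\inv(\sigma) \cap \Phi_h^-[T]$ and $\inv_{h[T]}(x_\sigma)$.

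It then remains to show $|\inv(w_T) \cap \Phi_h^-| = \deg_h(T)$. Using Lemma~\ref{lemma: inversions}(1), an inversion $(i,j)$ of $w_T$ with $i \in T$ lies in $\Phi_h^-$ iff $i \leq h(j)$, i.e., iff $\{j, i\}$ is an edge of $\Gamma_h$ with $j < i$ and $i \in T$. The count of such edges is precisely the right-hand side of Lemma~\ref{lem: deg(T)}, hence equals $\deg_h(T)$.

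Combining these two contributions gives $|\inv_h(w)| = \deg_h(T) + |\inv_{h[T]}(x_\sigma)|$, so $\Psi_T$ restricts to a bijection $\mathcal{W}_i(\mathbb{J}_\lambda, h, T) \to \mathcal{W}_{i - \deg_h(T)}(\mathbb{J}_{\lambda[1]}, h[T])$, as desired. I do not anticipate a serious obstacle here: the heavy lifting (the bijection of underlying sets, the inversion decomposition, and the degree formula) has already been carried out in Lemmas~\ref{lemma: inversions},~\ref{lemma: Hessenberg condition}, and~\ref{lem: deg(T)}; the theorem amounts to assembling these pieces and verifying that the grading shift matches $\deg_h(T)$.
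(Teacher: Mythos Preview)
Your proposal is correct and follows essentially the same route as the paper: both invoke Lemma~\ref{lemma: Hessenberg condition} (via Corollary~\ref{cor: factorization}) for the ungraded bijection, then intersect the inversion decomposition of Lemma~\ref{lemma: inversions}(2) with $\Phi_h^-$ and appeal to Lemma~\ref{lem: deg(T)} and Remark~\ref{rem: compatibility} to match the grading shift with $\deg_h(T)$. Your write-up is slightly more explicit in unpacking why $|\inv_h(w_T)| = \deg_h(T)$ via Lemma~\ref{lemma: inversions}(1), but the argument is the same.
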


\begin{proof}  Let $w\in \W_i(\mathbb{J}_\lambda, h, T)$ and $T=\{\ell_1< \ell_2< \cdots< \ell_k\}$.  By Corollary~\ref{cor: factorization}, $w=w_T\sigma$ for a unique $\sigma\in \Stab(\ell_1, \ldots, \ell_k)$ and $\Psi_T(w) = x_{\sigma}$ by definition.    Lemma~\ref{lemma: Hessenberg condition} implies $\Psi_T: \W(\mathbb{J}_\lambda, h, T) \to \W(\mathbb{J}_{\lambda[1]}, h[T])$ is a bijection, so we have only to show that this bijection respects the grading as indicated.  But this follows from Lemma~\ref{lemma: inversions} by intersecting both sides of~\eqref{eqn: coset decomp} with $\Phi_h$. We obtain
\begin{eqnarray*}
\inv_h(w)  = \inv_h(w_T) \sqcup (\inv(\sigma)\cap \Phi_h[T]) 
\end{eqnarray*}
so
\[
i =|\inv_h(w)| =  |\inv_h(w_T)| + |\inv_{h[T]}(x_{\sigma})| = \deg_h(T) +  |\inv_{h[T]}(x_{\sigma})|
\]
where the equation above follows directly from Lemma~\ref{lem: deg(T)} and Remark~\ref{rem: compatibility}.  From this it follows that $\Psi_T(w)\in \W_{i-\deg_h[T]}(\mathbb{J}_{\lambda[1]}, h[T])$ as desired.
\end{proof}


\section{Inductive formulas for the multiplicities associated to maximal sink sets}\label{sec:closed formula}

The main result of this section is  a first application of the results obtained in the previous sections. Specifically, we derive an inductive formula for the 
 multiplicities $c_{\mu,i}$ of the tabloid representations in the decomposition of the dot action representation on $H^{2i}(\Hess(\mathsf{S},h))$, 
 for partitions $\mu$ with the maximal number of parts. This result proves \cite[Conjecture 8.1]{HaradaPrecup2017}. 
%

In the following we use the notation and terminology of Section~\ref{sec: inductive formula}. Let $n$ be a positive integer, $h: [n] \to [n]$ a Hessenberg function, $\Gamma_h$ its associated incomparability graph. Let $k=ht(I_h)+1$. Let $\omega \in \A_k(\Gamma_h)$ be an acyclic orientation of $\Gamma_h$ and let $T = \sk(\omega)$ be the sink set of $\omega$ of maximal size $k$. We can delete the vertices of $T$ and all incident edges from $\Gamma_h$ to obtain a strictly smaller graph $\Gamma_{h[T]}$ associated to a smaller Hessenberg function $h[T]: [n-k] \to [n-k]$ (see \cite[Section 4]{HaradaPrecup2017} for more details). 

Let $\mathsf{S}_{n-k}\in \mathfrak{gl}(n-k, \C)$ be a regular semisimple operator.  The cohomology of the Hessenberg variety $\Hess(\mathsf{S}_{n-k}, h[T]) \subseteq \Flags(\C^{n-k})$ has a dot action of the permutation group $\Symm_{n-k}$ and therefore has a corresponding decomposition analogous to~\eqref{eq: decomp into Mlambda}. We denote the coefficients for this decomposition by $c_{\mu', i}^T$ as follows: 
\begin{equation}\label{eq: decomp into Mlambda small}
H^{2i}(\Hess(\mathsf{S}_{n-k}, h[T])) = \sum_{\mu' \vdash (n-k)} c_{\mu', i}^T M^{\mu'}.
\end{equation}

With the notation in place we can state our inductive formula, which was first stated as Conjecture 8.1 in \cite{HaradaPrecup2017}.

\begin{theorem}\label{theorem: max sink set case} 
Let $n$ be a positive integer and $h: [n] \to [n]$ a Hessenberg function. 
Let $k=ht(I_h)+1$. Suppose $\mu\vdash n$ is a partition of $n$ with exactly $k=ht(I_h)+1$ parts.  Then for all $i\geq 0$ we have 
\begin{equation}\label{eq:formula for c}
c_{\mu, i} = \sum_{T \in\SK_k(\Gamma_h)} c_{\mu[1], i-\deg_h(T)}^{T}.
\end{equation}
\end{theorem}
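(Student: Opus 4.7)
The strategy is to write down the matrix equation of Corollary~\ref{corollary: graded matrix equation} at level $n$ for $h$ and at level $n-k$ for each reduced Hessenberg function $h[T]$, link them via the sink-set decomposition of Section~\ref{sec: inductive formula}, and then invert using the upper-triangularity of Theorem~\ref{theorem: upper-triangular}. The formula will be established simultaneously for all $k$-part partitions of $n$.

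Fix a $k$-part partition $\lambda$ of $n$. In the matrix equation $\sum_{\nu \vdash n} A(\lambda, \nu)\,c_{\nu, i} = |\W_i(\mathbb{J}_\lambda, h)|$, Theorem~\ref{lemma: vanishing coefficients} kills all $\nu$ with more than $k$ parts, while Proposition~\ref{proposition: M-formulas Martha version}(1) kills all $\nu$ with fewer than $k$ parts; so only $k$-part $\nu$ contribute. For such $\nu$ I claim $A(\lambda, \nu) = A_{n-k}(\lambda[1], \nu[1])$: if $\nu_k \geq \lambda_k$ this is Proposition~\ref{proposition: M-formulas Martha version}(2) with $\ell = 1$, and if $\nu_k < \lambda_k$ both sides vanish by Proposition~\ref{proposition: M-formulas Martha version}(1) applied at the appropriate level (noting that $\lambda[1]$ still has $k$ parts since $\lambda_k \geq 2$ in this case). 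Via the bijection $\nu \mapsto \nu[1]$ between $k$-part partitions of $n$ and at-most-$k$-part partitions of $n-k$, and writing $\lambda' = \lambda[1]$ and $\tilde{c}_{\nu', i} := c_{\nu, i}$, the matrix equation becomes
\begin{equation*}
\sum_{\nu'\,:\,\leq\, k\text{ parts}} A_{n-k}(\lambda', \nu')\, \tilde{c}_{\nu', i} \;=\; |\W_i(\mathbb{J}_\lambda, h)|.
\end{equation*}

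On the other hand, Proposition~\ref{prop: prop1} combined with Theorem~\ref{thm: inductive formula} gives the sink-set decomposition
$|\W_i(\mathbb{J}_\lambda, h)| = \sum_{T \in \SK_k(\Gamma_h)} |\W_{i-\deg_h(T)}(\mathbb{J}_{\lambda'}, h[T])|$. Applying Corollary~\ref{corollary: graded matrix equation} at level $n-k$ to each $h[T]$ and restricting to at-most-$k$-part $\nu'$ (possible because $ht(I_{h[T]}) + 1 \leq k$ by Remark~\ref{rem: ht is dec}, so $c_{\nu', j}^T = 0$ whenever $\nu'$ has more than $k$ parts, by Theorem~\ref{lemma: vanishing coefficients}) yields
$|\W_j(\mathbb{J}_{\lambda'}, h[T])| = \sum_{\nu'\,:\,\leq\, k\text{ parts}} A_{n-k}(\lambda', \nu')\, c_{\nu', j}^T.$
Substituting, we obtain
\begin{equation*}
\sum_{\nu'\,:\,\leq\, k\text{ parts}} A_{n-k}(\lambda', \nu') \Bigl( \tilde{c}_{\nu', i} - \sum_{T \in \SK_k(\Gamma_h)} c_{\nu', i - \deg_h(T)}^T \Bigr) \;=\; 0
\end{equation*}
for every at-most-$k$-part partition $\lambda'$ of $n-k$.

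The at-most-$k$-part partitions of $n-k$ form an initial segment of the total order $\preceq$ on $\Par(n-k)$, because any such partition is $\prec$ any partition with more than $k$ parts (the first entry of the dual partition equals the number of parts, and $\preceq$ is lex on duals). By Theorem~\ref{theorem: upper-triangular} the corresponding principal submatrix of $A_{n-k}$ is therefore upper-triangular with $1$'s on the diagonal, and in particular invertible. This forces the parenthesized expression to vanish for every such $\nu'$, which, translated back via the bijection, gives $c_{\mu, i} = \sum_{T \in \SK_k(\Gamma_h)} c_{\mu[1], i-\deg_h(T)}^T$ for every $k$-part partition $\mu$ of $n$. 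The main subtlety is the identification $A(\lambda, \nu) = A_{n-k}(\lambda[1], \nu[1])$ for arbitrary $k$-part $\lambda, \nu$ (including the vanishing case $\nu_k < \lambda_k$), which is what allows the level-$n$ equation to be cleanly reduced to a level-$(n-k)$ one.
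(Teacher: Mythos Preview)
Your proposal is correct and follows essentially the same approach as the paper's own proof: both arguments restrict the matrix equation $AX_i = W_i$ to partitions with exactly $k$ parts (using Theorem~\ref{lemma: vanishing coefficients} to kill larger part-counts), expand $|\W_i(\mathbb{J}_\lambda,h)|$ via the sink-set decomposition and Theorem~\ref{thm: inductive formula}, apply the level-$(n-k)$ matrix equation to each $h[T]$, identify $A(\lambda,\mu) = A(\lambda[1],\mu[1])$, and conclude by upper-triangularity. The only cosmetic difference is that you transfer the entire system to level $n-k$ before invoking invertibility, whereas the paper rewrites the right-hand side and compares at level $n$; your case analysis for $A(\lambda,\nu) = A_{n-k}(\lambda[1],\nu[1])$ when $\nu_k < \lambda_k$ is in fact slightly more explicit than the paper's appeal to Corollary~\ref{corollary: truncate A}.
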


\begin{proof} 

Let $\Par_{\geq k}(n)$ denote the set of all partitions of $n$ with at least $k$ parts and $\Par_k(n)$ denote the set of all partitions of $n$ with exactly $k$ parts.  Let $\overline{A}=(A(\lambda, \mu))_{\lambda, \mu\in \Par_{\geq k}(n)}$.  By definition, if $\lambda \in \Par_{\geq k}(n)$ and $\lambda \preceq \mu$, then $\mu$ has at least $k$ parts so $\overline{A}$ is the lower right-hand $|\Par_{\geq k}(n)|\times |\Par_{\geq k}(n)|$ submatrix of $A$.  In particular, $\overline{A}$ is upper-triangular since $A$ is by Theorem~\ref{theorem: upper-triangular}. We consider the matrix equation
\begin{eqnarray}\label{eqn: matrix eqn2}
\overline{A} \, \overline{X}_i = \overline{W}_i \; \textup{ where } \; \overline{X}_i = (c_{\mu, i})_{\mu\in \Par_{\geq k}(n)} \; \textup{ and } \; \overline{W}_i = (|\W_i(\mathbb{J}_\lambda, h)|)_{\lambda\in \Par_{\geq k}(n)}.
\end{eqnarray}
The matrix equation appearing in~\eqref{eqn: matrix eqn2} is consistent since we already know a priori that there exists a solution, given by the coefficients $c_{\mu,i}$ of~\eqref{eq: decomp into Mlambda}. Moreover, since $\overline{A}$ is upper-triangular, this solution is unique.  Furthermore, $c_{\mu, i}=0$ for all partitions $\mu$ with more than $k$ parts by Theorem~\ref{lemma: vanishing coefficients}.  We may therefore rewrite the matrix equation $\overline{A}\, \overline{X}_i=\overline{W}_i$ as the following system of linear equations, one equation for each partition $\lambda\vdash n$ with exactly $k$ parts:
\begin{eqnarray}\label{eqn: linear relations2}
|\W_i(\mathbb{J}_\lambda, h)| = \sum_{\mu\in \Par_k(n)} c_{\mu, i} A(\lambda, \mu).
\end{eqnarray}

In order to proved the desired result, it suffices to show that the RHS of~\eqref{eq:formula for c} satisfies, as $\mu$ varies among all partitions of $n$ with exactly $k$ parts, the linear relations obtained in~\eqref{eqn: linear relations2}.  From the sink set decomposition of $\W_i(\mathbb{J}_\lambda, h)$ given in Proposition~\ref{prop: prop1} and the bijection between $\W_i(\mathbb{J}_\lambda, h, T)$ and $\W_{i-\deg_h(T)}(\mathbb{J}_{\lambda[1]}, h[T])$ given in Theorem~\ref{thm: inductive formula} we obtain 
\begin{eqnarray*} 
\lvert \W_i(\mathbb{J}_{\lambda}, h)\rvert  &=& \sum_{T \in \SK_{k}(\Gamma_h)} \lvert \W_{i - \deg_h(T)}(\mathbb{J}_{\lambda[1]}, h[T])\rvert  \\ 
&=& \sum_{T \in \SK_k(\Gamma_h)} \sum_{\mu' \vdash (n-k)} c_{\mu', i - \deg_h(T)}^T \; \lvert \mathcal{D}(\mathbb{J}_{\lambda[1]}, J_{\mu'}) \rvert 
\end{eqnarray*}
where the second equality follows from Theorem~\ref{theorem: linear relations}, applied to $J=\mathbb{J}_{\lambda[1]}, h[T]$ and $n-k$.  Notice that $\lvert \mathcal{D}(\mathbb{J}_{\lambda[1]}, J_{\mu'}) \rvert  = A(\lambda[1], \mu')$ by~\eqref{eq: def A lambda mu}.

From Remark~\ref{rem: ht is dec} it follows that for any $T \in \SK_{k}(\Gamma_h)$, the height of the ideal $I_{h[T]}$ is at most $k-1=ht(I_h)$ and hence the coefficient $c_{\mu', i-\deg_h(T)}^{T}$ appearing in the last expression above is zero if $\mu'$ has more than $k$ parts. Therefore we may rewrite the above expression and exchange the summation operations as follows:  
\begin{equation*}
\begin{split} 
\sum_{T\in \SK_{k}(\Gamma_h)} \sum_{\mu' \vdash (n-k)} c_{\mu', i - \deg_h(T)}^T\; A(\lambda[1], \mu') & = 
\sum_{T\in \SK_{k}(\Gamma_h)} \sum_{\substack{\mu' \vdash (n-k) \\ \mu' \textup{ has at most $k$ parts}}} c_{\mu', i - \deg_h(T)}^T\; A(\lambda[1], \mu') \\
& = \sum_{\substack{\mu' \vdash (n-k) \\ \mu' \textup{ has at most $k$ parts}}} \left( \sum_{T\in \SK_{k}(\Gamma_h)} c_{\mu', i - \deg_h({T})}^{T} \right)A(\lambda[1], \mu'). 
\end{split} 
\end{equation*} 
Next we observe that any partition $\mu'$ of $n-k$ which has at most $k$ parts is equal to $\mu[1]$ for a unique partition $\mu$ of $n$ with the properties that $\mu$ has exactly $k$ parts. Indeed, it is not hard to see that $\mu:=(\mu'_1 + 1, \mu'_2+1, \ldots, \mu'_k+1)$ is precisely this (unique) $\mu$.

Using this correspondence $\mu \leftrightarrow \mu[1]=\mu'$, we may 
therefore conclude that the last expression in the displayed equations above is equal to
\[
\sum_{\mu\in \Par_k(n)} \left( \sum_{T \in \SK_{k}(\Gamma_h)} c_{\mu[1], i - \deg_h({T})}^{T} \right)A(\lambda[1], \mu[1])
\]
which is in turn equal to 
\[
\sum_{\mu\in \Par_k(n)} \left( \sum_{T\in \SK_{k}(\Gamma_h)} c_{\mu[1], i - \deg_h({T})}^{T} \right) A(\lambda, \mu)
\]
by Corollary~\ref{corollary: truncate A}. Putting the above together we have obtained
\begin{equation}
\lvert \W_i(\mathbb{J}_{\lambda}, h)\rvert  = 
\sum_{\mu\in \Par_k(n)} \left( \sum_{{T}\in \SK_{k}(\Gamma_h)} c_{\mu[1], i - \deg_h({T})}^{T} \right) A(\lambda, \mu).
\end{equation}
This proves the desired result.
\end{proof}


\def\cprime{$'$}

\end{document}